\newcommand\fCat{\mathfrak{Cat}}
\newcommand\fMon{\mathfrak{Mon}}
\newcommand\fLMon{\mathfrak{Lax}\fMon}
\newcommand\fPMon{\mathfrak{Ps}\fMon}
\newcommand\fFib{\mathfrak{Fib}}
\newcommand\fPFib{\mathfrak{P}\fFib}
\newcommand\MonExt{\fMon\mathfrak{Ext}}
\newcommand\sx{\mathsf{x}}
\newcommand\sC{\mathsf{C}}
\newcommand\Pcar{\mathsf{Pcar}}
\newcommand\Car{\mathsf{Car}}
\newcommand\Groth{\mathsf{Groth}}
\newcommand\Mon{\mathsf{Mon}}
\newcommand\Cat{\mathsf{Cat}}
\newcommand\pt{\bullet}
\newcommand\Clev{\mathsf{Clev}}
\newcommand\Maps{\mathsf{Maps}}
\newcommand\id{\mathsf{Id}}
\renewcommand\ker{\mathsf{Ker}}
\newcommand\aut{\mathsf{Aut}}
\newcommand\End{\mathsf{End}} 
\newcommand\monext{\Mon\mathsf{Ext}} 
\newcommand\regmonext{\mathsf{Reg}\Mon\mathsf{Ext}} 
\newcommand\N{\mathbb N}
\newcommand\cyc{{\sf Z}^2}
\newcommand\To\Rightarrow
\newcommand\xto[1]{\xrightarrow[]{#1}}
\newcommand\ptM{
  \rotatebox[origin=c]{180}{$\bm{\circlearrowright}\hspace{-2.5mm}^\bullet$}_M
}
\newtheorem{Core}{Definition}[section]
\newtheorem{Aux}{Remark}[section]
\newtheorem{De}[Core]{Definition}
\newtheorem{Th}[Core]{Theorem}
\newtheorem{Pro}[Core]{Proposition}
\newtheorem{Le}[Core]{Lemma}
\newtheorem{Cor}[Core]{Corollary}
\newtheorem{Rem}[Aux]{Remark}
\newtheorem{Ex}[Aux]{Example}
\title{Grothendieck's theory of fibred categories for monoids}
\author{Ilia Pirashvili}
\begin{document}

\keywords{
  Cartesian morphisms, Grothendieck construction, Schreier theory, Monoids,
  Fibred categories
}

\begin{abstract}
  Grothendieck's theory of fibred categories establishes an equivalence between fibred
  categories and pseudo functors. It plays a major role in algebraic geometry
  \cite{sga1}, \cite{vistoli} and categorical logic \cite{catlog}. This paper aims to
  show that fibrations are also very important in monoid theory. Among other things, we
  generalise Grothendieck's result slightly and show that there exists an equivalence
  between prefibrations (also known as Schreier extension in the monoidal world) and lax
  functors. We also construct two exact sequences which involve various automorphism
  groups arising from a given fibration. This exact sequence was previously only known
  for group extensions \cite{yadav}.
\end{abstract}

\maketitle

\section{Introduction}

  In 1961, Grothendieck introduced fibred categories \cite{sga1} and discovered an
  equivalence between fibred categories and pseudo functors. 
  The theory of fibred categories has many important applications in algebraic
  geometry, begin a core construction that appears in the theory of stacks
  \cite{vistoli}. It also plays an important role in categorical logic \cite{catlog}.

  This paper aims to show that fibrations are also very important in monoid theory. This
  is done in the natural way by considering monoids as one object categories and
  monoid homomorphism as functors. In this vein, we call a monoid homomorphism a
  pre-fibration if the corresponding functors is a pre-fibration.

  It is known \cite{manuell}, though perhaps not well-known, that the notion of a
  prefibration corresponds to a Schreier extension in terms of monoid homomorphisms. A
  fibration in this correspondence is a regular Schreier extension. Indeed, this
  fundamental observation is not mentioned even in quite recent works on Schreier
  extensions of monoids, see \cite{Faul}. Moreover, even where (regular) Schreier
  extensions are discussed, the basic classification theory of Schreier extensions is
  usually stated in a much weaker form than Grothendieck's original result \cite{schP2},
  \cite{PaForumMath}. It is one of the aims of this paper to alleviate this.

  Schreier extensions play a more prominent role in monoid theory than regular Schreier
  extensions \cite{redei, I, Str, bourn, schP1, schP2, APachII}. It is subsequently one
  of the main goals of this paper to generalise Grothendieck's theory to prefibrations
  and lax functors. Though we will restrict ourselves to one-object categories (to stick
  with the theme of accessibility for these interested in monoids and not category
  theory), the generalisation discussed here should also hold in the general case with
  fairly little modification. This is achieved through generalising the semi-direct
  product to a sort of 2-semi-direct product (referred to as $\Groth$ in this paper). 

  Another goal was the study of the automorphism group of pre-fibrations and the
  construction of short exact sequence involving various automorphism groups. Our
  results have previously been obtained in \cite{yadav} for the case of groups.

  As already mentioned, the fact that Schreier extensions are a specialisation of
  Grothendieck's pre-fibrations was observed only fairly recently and is perhaps not so
  well known, yet. This is perhaps due to the rather involved nature of the general
  definition of Grothendieck's pre-fibrations in standard references \cite{borc, catlog,
  vistoli}, which requires a certain effort to learn a fairly substantial portion from
  geometry or logic/category theory, with no immediate link to monoids. This paper aims
  to alleviate this to some degree by presenting categorical constructions with a sole
  focus on monoids.

  The paper is organised as follows: We introduce lax- and pseudo actions of a monoid
  $M$ on a monoid $A$ in Section \ref{sec.2cat_mon}. In such a case, $A$ will be
  referred to as a lax- (resp. pseudo) $M$-monoid. This is just a specialisation of a
  much more general notion, that of a lax- (resp. pseudo) functor acting on a
  2-category. By definition, a lax $M$-monoid is a triple $(A, \phi, \gamma)$ where $A$
  is a monoid while $\phi: A\times M\to A$ and $\gamma: M\times M\to A$ are maps
  satisfying certain properties. The map $\phi$ defines a lax action of $M$ on $A$,
  while $\gamma$ is a sort of $2$-cocycle measuring the non-associativity of the action.
  A lax $M$-monoid is a pseudo $M$-monoid if $\gamma$ takes values in $A^\sx$, where
  $A^\sx$ denotes the group of invertible elements of $A$.

  Section \ref{sec.Groth_th}, which can be read independently, represents the origin of
  this paper. Here, we introduce pre-fibrations of monoids as the specialisations of the
  much more general notions \cite{borc} of pre-fibrations of categories. In more detail,
  for a monoid homomorphism $\sigma: M\to N$, we define the two subsets: $\Pcar(\sigma)$
  and $\Car(\sigma)$ of $M$, known as the sets of $\sigma$-precartesian and
  $\sigma$-cartesian elements, respectively. One has inclusions of sets
  \[A^\sx \subseteq \Car(\sigma) \subseteq \Pcar(\sigma) \subseteq M,\]
  where $A=\ker(\sigma)$. Moreover, $\Car(\sigma)$ is a submonoid of $M$ and
  $\Pcar(\sigma)$ is a right $\Car(\sigma)$-subset of $M$. The homomorphism $\sigma$ is
  called a fibration (resp. prefibration) if the restriction of $\sigma$ on
  $\Car(\sigma)$ (resp. $\Pcar(\sigma)$) is surjective. Several properties of
  pre-fibrations are subsequently stated.

  We proceed to construct a monoid $\Groth(A, \phi, \gamma)$ in Section
  \ref{sec.Groth_const} starting from a lax $M$-monoid $(A, \phi, \gamma)$, through a
  process known as the Grothendieck construction. We prove that the natural projection
  $\Groth(A,\phi,\gamma)\to M$ is a prefibration. Moreover, it is a fibration if $(A,
  \phi, \gamma)$ is a pseudo $M$-monoid.
  
  We construct a lax $N$-monoid structure on $A=\ker(\sigma)$ starting with a
  prefibration $\sigma: M\to N$ in Section \ref{sec.cleave}. This depends on certain
  sections of $\sigma$ known as cleavages. This action is a pseudo action for
  fibrations.

  The main result of Grothendieck's theory, restricted to the setting of monoids, states
  that the Grothendieck construction defines an equivalence between the 2-category of
  pseudo (resp. lax) $N$-monoids and the 2-category of fibrations (resp. prefibrations)
  over $N$.

  The results of Section \ref{sec.cleave} are intimately related to the theory of
  Schreier extensions of monoids \cite{redei}. Some of the facts proved here are also
  contained in \cite{schP1}, \cite{schP2}, \cite{APachII}.

  Section \ref{sec.autfib} studies the group of automorphism of fibrations, while Section
  \ref{sec.commutative} focuses on the case when the commutative kernel of a fibration is a
  commuttaive monoid. We show that several results obtained in \cite{APachII} follow
  immediately from the the general theory of Grothendiek. The section also discusses an
  exact sequence involving various automorphism groups, see Theorem \ref{th.main_7}. This
  exact sequence was previously known only for group extensions \cite{yadav}. Our proof
  is based on some technical results obtained in Section \ref{sec.autfib}. 

  Here, I would like to take the opportunity to thank Graham Manuell for pointing out
  that the link between Schreier extensions and the Grothendieck construction was
  already known.

\section{2-categories relating to monoids}\label{sec.2cat_mon}

  When working with monoids, we are usually dealing with the category $\Mon$. Objects of
  this category are monoids and morphisms are monoid homomorphisms. For the purposes of
  this paper, it will be important to see the category $\Mon$ as a full subcategory of
  the category $\Cat$ of small categories. Recall that under this embedding, a monoid
  $M$ is identified with the one object category $\ptM$. The singular object here is
  denoted by $\pt$. The elements of $M$ are considered as endomorphisms of $\pt$. The
  composition of endomorphisms in $\ptM$ is the multiplication law of $M$.

  In contrast to $\Cat$, we can consider the 2-category of categories $\fCat$. The later
  has a much richer structure, as it maintains the information of natural
  transformations between functors. These are the so called $2$-morphisms or 2-cells of
  $\fCat$. The category $\Cat$ can be seen as the underlying category of $\fCat$, much
  as the objects of a small category $\sC$ can be regarded as is its underlying set.

  This suggest that we take a look at the 2-category $\fMon$, whose underlying category
  is $\Mon$. The 2-morphisms in $\fMon$, meaning the morphisms $f\To g$ between the
  homomorphisms $f, g: M\to N$, are simply elements $n\in N$ such that $nf(m)=g(m)n$ for
  all $m\in M$. This is the same as a natural transformation if one looks at $f$ and $g$
  as functors between one object categories.

  The philosophy of 2-categories suggest that the various notions of category theory are
  just "shadows" of richer structures. This brings us to the notions of lax and
  pseudo-functors, which are 2-categorical generalisations of functors. This viewpoint
  goes back to Grothendieck, see \cite{sga1}. Our aim is to apply a result of
  Grothendieck to monoids and elaborate upon it in this setting. This original result
  established a type of equivalence between fibrations and pseudofunctors. In the course
  of doing this task, we prove an even more general result, by constructing an
  equivalence between prefibrations and laxfunctors. We also give several applications
  in the cohomology theory of monoids. 
  
\subsection{Lax and pseudo actions}
  
  A contravariant functor $\ptM\to\Mon$ is essentially a monoid $A$ with a right
  $M$-action. Specifically, this is given by $\psi: A\times M\to A$ with
  $(a,m)\mapsto\psi_m(a)$ such that
  \begin{eqnarray*}
    \psi_1(a)=a, & & \psi_{mn}(a)=\psi_n(\psi_m(a)) \\
    \psi_m(1)=1, & & \psi_m(ab)=\psi_m(a)\psi_m(b).
  \end{eqnarray*}
  Contravariant lax and pseudo functors $\ptM\to\fMon$ can thusly be considered as
  2-categorical versions of $M$-monoids. We call them lax and pseudo actions of $M$ on a
  monoid $A$. We also say that $A$ is a \emph{lax $M$-monoid}, respectively a
  \emph{pseudo $M$-monoid}. Let us state these notions in terms of monoids:

  \begin{De}\label{de.lax_act}
    A \emph{lax action} of a monoid $M$ on a monoid $A$ is given by two maps
    \[\phi: A\times M\to A,\ (a,m)\mapsto\phi_m(a)\]
    and
    \[\gamma: M\times M\to A,\ (m,n)\mapsto\gamma_{m,n}\]
    such that the following conditions hold:
    \begin{itemize}
      \item [i)] We have $\phi_1(a)=a$, $a\in A$.
      \item [ii)] Let $m,n\in M$ and $a\in A$. One has
        \[\gamma_{m,n}\phi_n(\phi_m(a))=\phi_{mn}(a)\gamma_{m,n}.\]
      \item [iii)] For any elements $m, n, k\in M$, we have
        \[\gamma_{mn,k}\phi_k(\gamma_{m,n})=\gamma_{m,nk}\gamma_{n,k}.\]
      \item[iv)] One has $\gamma_{1,m}=1=\gamma_{m,1}$ for any $m\in M$.
      \item [v)] We have $\phi_m(1)=1$ and $\phi_m(ab)=\phi_m(a)\phi_m(b)$.
    \end{itemize}
    A lax action is called a \emph{pseudo action} if $\gamma_{mn}$ is invertible for all
    $m, n\in M$.
  \end{De}
  
  \begin{Rem}
    In the general theory of lax (resp. pseudo) functors, one requires the existence of
    an element $b\in A$ (resp. $b\in A^\sx$) such that $\phi_1(a)b=ba$ for all $a\in
    A$ instead of $\phi_1=\id$ as we assumed above. This simpliefied condition, however,
    is sufficient for our purposes.
  \end{Rem}

  We will, for simplicities sake, say that $A$ is a lax $M$-monoid if $(A,\phi,\gamma)$
  is a lax action of $M$ on $A$. Likewise for pseudo actions. We see that $M$-monoids
  are exactly lax $M$-monoids for which $\gamma_{m,n}=1$ for all $m,n\in M$.

  \begin{Rem}\label{rem.grp_ext}
    Observe that for groups, the above conditions are familiar equalities in group
    extension theory, see Equalities (8.5) and (8.6) in \cite[ChIV, Section
    8]{homology}, which are obtained as follows: Let
    \[1\to G\xto{\kappa} B \xto{\pi}\Pi \to 1\]
    be a short exact sequence of groups. Choose a map $u:\Pi\to B$ such that $\pi\circ
    u=\id_\Pi$ and consider $f:\Pi\times\Pi\to G$ and $\phi:G\times\Pi\to G$ which are
    uniquely defined by the equations
    \[u(x)g=\phi(g,x)u(x),\quad u(x)u(y)=f(x,y)u(xy).\]
    The pair $(\phi,f)$ is a pseudo action of $\Pi$ on $G$, as shown in the computations
    on page 125 of \cite{homology}.
  \end{Rem}
  
  We also need morphisms between lax and pseudo $M$-monoids. The following is a
  specialisation of the general definition.
  
  \begin{De}
    Consider lax $M$-monoids $(A,\phi,\gamma)$ and $(A',\phi',\gamma')$. A \emph{lax
    $M$-homomorphism} $(A,\phi,\gamma)\to(A',\phi',\gamma')$ is given by a pair
    $(\alpha,\tau)$, where $\alpha:A\to A'$ is a monoid homomorphism and $\tau:M\to A'$,
    $m\mapsto \tau_m$ is a map such that $\tau_1=1$. Moreover, the following two
    identities
    \[\phi'_m(\alpha(a))\tau_m=\tau_m\alpha(\phi_m(a))\]
    and
    \[\gamma'_{m,n}\phi'_n(\tau_m)\tau_n=\tau_{mn}\alpha(\gamma_{m,n})\]
    must hold.
  
    If $\tau$ is additionally invertible, $(\alpha,\tau)$ is called a pseudo
    $M$-homomorphism.
  \end{De}
  
  For a pair of lax M-homomorphisms
  \[(A,\phi,\gamma)\xto{(\alpha,\tau)}(A',\phi',\gamma')
    \xto{(\alpha',\tau')}(A'',\phi'',\gamma'')\]
  of lax $M$-monoids, the composite morphism $(A,\phi,\gamma)\to(A'',\phi'', \gamma'')$
  is given by
  \[(\alpha',\tau')(\alpha,\tau)=(\alpha'\alpha,m\mapsto\tau_m'\alpha'(\tau_m)).\]
  
  \begin{De}
    If $(\alpha,\tau)$ and $(\beta,\theta)$ are two lax morphisms between
    $(A,\phi,\gamma) \to(A',\phi',\gamma')$, a lax 2-\emph{cell} $(\alpha,\tau)\To
    (\beta,\theta)$ is given by an element $c'\in A'$ for which conditions
    \[c'\alpha(a)=\beta(a)c'\quad{\rm and }\quad \phi_m'(c')\tau_m=\theta_mc'\]
    hold. We say that $c'$ is a pseudo 2-cell if $c'$ is invertible.
  \end{De}
  
  One obtains the 2-category of lax $M$-monoids in this fashion, which is denoted by
  $\fLMon_M$. We also denote by $\fPMon_M$ the full sub-2-category of pseudo
  $M$-monoids, pseudo morphisms and pseudo 2-cells.
  
\section{Grothendieck's theory of fibre categories applied to monoids}
\label{sec.Groth_th}
  
  As the title indicates, this section will deal with one object categories, meaning
  essentially monoids, and how the theory of Grothendieck's fibrations looks like in
  this setting. The goal is to formulate all this in terms of monoids.
  
\subsection{Precartesian elements}\label{sec.precart}
  
  Let $\sigma:M\to N$ be a homomorphism of multiplicatively written monoids. As usual,
  $\ker(\sigma)$ denotes the preimage of $1\in N$.
  
  The following is a special case of \cite[Definition 8.1.5]{borc}.
  
  \begin{De}\label{de.sig_precar}
    An element $x\in M$ is \emph{$\sigma$-precartesian} if for a given $z\in M$ such
    that $\sigma(z)=\sigma(x)$, there exists a unique element $y\in \ker(\sigma)$ such
    that $z=xy$.
  \end{De}

  In the notation of Definition \ref{de.sig_precar}, we say that $x$ is
  $\sigma$-precartesian above $n$, where $n=\sigma(x)$. It follows that if $x$ is
  $\sigma$-precartesian and $xu=xv$ for $u,v\in\ker(\sigma)$, then $u=v$. We will refer
  to this as the \emph{weak cancellation property}.

  For a homomorphism $\sigma:M\to N$, let $\Pcar(\sigma)$ be the set of all
  $\sigma$-precartesian elements. It is obvious that $1\in\Pcar(\sigma)$.

  \begin{Le}\label{le.sig_pre_inv}
    Let $\sigma:M\to N$ be a homomorphism of monoids. Assume $x$ and $y$ are
    $\sigma$-precartesian elements of $M$. If $\sigma(x)=\sigma(y)$, there exists a
    unique element $h\in\ker(\sigma)$ such that $y=xh$. Any such $h$ must be invertible.
  \end{Le}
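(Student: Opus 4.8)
The plan is to prove the two assertions in turn: first existence and uniqueness of $h$, then its invertibility.

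For the existence and uniqueness of $h\in\ker(\sigma)$ with $y=xh$: since $x$ is $\sigma$-precartesian and $\sigma(y)=\sigma(x)$, apply Definition \ref{de.sig_precar} with $z=y$ to obtain a unique $h\in\ker(\sigma)$ with $y=xh$. This step is immediate from the definition; there is nothing to do beyond citing it. Note that the hypothesis that $y$ is precartesian is not yet needed — only that $x$ is.

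The substantive part is invertibility of $h$. Here I would use that $y$ is also $\sigma$-precartesian. Since $\sigma(y)=\sigma(x)$ and $y$ is precartesian, applying the definition the other way round yields a unique $k\in\ker(\sigma)$ with $x=yk$. Combining, $x=yk=xhk$ and $y=xh=ykh$. From $x=xhk$ and the weak cancellation property for $x$ (applied to $hk$ and $1$, both in $\ker(\sigma)$), we get $hk=1$; symmetrically, from $y=ykh$ and weak cancellation for $y$, we get $kh=1$. Hence $h$ is invertible in $\ker(\sigma)$ with inverse $k$.

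As for what I expect to be the main obstacle: there is very little friction here, since the weak cancellation property has already been extracted as a consequence of precartesianness in the text preceding the lemma. The one point requiring a moment's care is making sure the products $hk$ and $kh$ genuinely lie in $\ker(\sigma)$ so that weak cancellation applies — this holds because $\ker(\sigma)$ is a submonoid of $M$, being the preimage of the identity under a homomorphism. Once that is noted, the argument is a short two-line symmetry computation. I would present it in that order: cite the definition for existence/uniqueness of $h$, invoke it again for $x=yk$, then derive $hk=1$ and $kh=1$ by weak cancellation on $x$ and $y$ respectively.
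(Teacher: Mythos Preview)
Your proof is correct and follows essentially the same route as the paper: existence and uniqueness of $h$ from the precartesian property of $x$, then by symmetry obtain $k\in\ker(\sigma)$ with $x=yk$, and conclude $hk=1=kh$ via the uniqueness clause (equivalently, weak cancellation) applied to $x$ and $y$. The paper's argument is identical up to notation (it writes $t$ for your $k$) and phrases the cancellation step directly as ``the uniqueness assumption of Definition~\ref{de.sig_precar}'' rather than naming it weak cancellation.
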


  \begin{proof}
    The existence of such an $h$ follows from Definition \ref{de.sig_precar}. For the
    rest, observe that by symmetry, there must also be a uniquely defined
    $t\in\ker(\sigma)$ for which $x=yt$ holds. Thus, $x=xht$ and $y=yth$. The uniqueness
    assumption of Definition \ref{de.sig_precar} gives now gives us $ht=1=th$. The
    result follows.
  \end{proof}

  \begin{De}
    A homomorphism $\sigma:M\to N$ is called a \emph{prefibration} if the restricted map
    $\Pcar(\sigma)\to N$ is surjective. In other words, if there exists a
    $\sigma$-precartesian element in $M$ above any element of $N$.
  \end{De}

\subsection{Grothendieck's fibrations for monoids}
	
  We will follow \cite{borc} closely, starting with a specialisation of \cite[Definition
  8.1.2]{borc}.

  \begin{De}\label{de.cart_sigma}
    Let $\sigma:M\to N$ be a homomorphism of monoids. An element $x\in M$ is called
    $\sigma$-\emph{cartesian} if the following conditions hold:
    \begin{itemize}
      \item [i)] For a given $z\in M$ and a factorisation $\sigma(z)=\sigma(x)v$, $v\in
        N$, there exists a $y\in M$ such that $z=xy$ and $\sigma(y)=v$.
      \item [ii)] If $y_1,y_2\in M$ are elements such that $xy_1=xy_2$ and
        $\sigma(y_1)=\sigma(y_2)$, then $y_1=y_2$.
    \end{itemize}
    If $x$ is $\sigma$-cartesian and $n=\sigma(x)\in N$, we say that $x$ is
    $\sigma$-cartesian above $n$.
  \end{De}
	
  Observe that in the above Definition, ii) basically says that $y$ in i) is unique. We
  let $\Car(\sigma)$ be the set of all $\sigma$-cartesian elements. Clearly, $1\in
  \Car(\sigma)$.

  \begin{Le}\label{le.sig_cart=sig_precart}
    Let $\sigma:M\to N$ be a homomorphism of monoids. Any $\sigma$-cartesian element is
    $\sigma$-precartesian. In other words, $\Car(\sigma)\subseteq\Pcar(\sigma)$.
  \end{Le}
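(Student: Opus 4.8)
The plan is to take an arbitrary $\sigma$-cartesian element $x\in M$ and verify the single defining condition of Definition \ref{de.sig_precar}: given $z\in M$ with $\sigma(z)=\sigma(x)$, produce a unique $y\in\ker(\sigma)$ with $z=xy$. The key observation is that precartesianness is exactly the special case of cartesianness where the factorisation of $\sigma(z)$ through $\sigma(x)$ is the trivial one, $\sigma(z)=\sigma(x)\cdot 1$.

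First I would apply part i) of Definition \ref{de.cart_sigma} to the factorisation $\sigma(z)=\sigma(x)v$ with $v=1\in N$. This yields some $y\in M$ with $z=xy$ and $\sigma(y)=1$, i.e. $y\in\ker(\sigma)$, giving existence. Next I would establish uniqueness: suppose $y'\in\ker(\sigma)$ also satisfies $z=xy'$. Then $xy=z=xy'$ and $\sigma(y)=1=\sigma(y')$, so part ii) of Definition \ref{de.cart_sigma} forces $y=y'$. Hence $x$ is $\sigma$-precartesian, and since $x$ was arbitrary, $\Car(\sigma)\subseteq\Pcar(\sigma)$.

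There is essentially no obstacle here; the lemma is a direct unwinding of the two definitions, with the only subtlety being to notice that the precartesian condition quantifies over $z$ with $\sigma(z)=\sigma(x)$ whereas the cartesian condition quantifies over $z$ together with a chosen factorisation of $\sigma(z)$, so one simply instantiates the factorisation as $\sigma(z)=\sigma(x)\cdot 1$. The argument is short enough that it can be presented in full without any routine computation being suppressed.
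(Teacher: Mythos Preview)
Your proof is correct and follows essentially the same approach as the paper's own proof: instantiate the cartesian factorisation with $v=1$ to obtain existence of a $y\in\ker(\sigma)$ with $z=xy$, then invoke part ii) of Definition~\ref{de.cart_sigma} for uniqueness. The only difference is notational (the paper calls the given element $y$ and the kernel element $h$, whereas you use $z$ and $y$), but the argument is identical.
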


  Note that the converse is not true in general, see Example \ref{ex.N_to_cycl}.

  \begin{proof}
    Assume $\sigma(x)=\sigma(y)$. Take $z=y$ and $v=1$ in i) of Definition
    \ref{de.cart_sigma} to obtain $y=xh$ for some $h\in \ker(\sigma)$. If $y=xh'$ for
    $h'\in \ker(f)$, we can use Condition ii) of Definition \ref{de.cart_sigma} to
    obtain $h=h'$.
  \end{proof}

  With this in mind, Lemma \ref{le.sig_pre_inv} implies the following:
	
  \begin{Cor}\label{cor.sig_cart_uniq_h}
    Let $\sigma:M\to N$ be a homomorphism of monoids and $x$ and $y$ $\sigma$-cartesian
    elements of $M$ for which $\sigma(x)=\sigma(y)$. There exists a unique element
    $h\in\ker(\sigma)$ for which $y=xh$. Such an $h$ must moreover be invertible.
  \end{Cor}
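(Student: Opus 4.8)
The plan is to deduce this directly from the two facts already established in this subsection, since the corollary is essentially just a repackaging of Lemma~\ref{le.sig_pre_inv}. First I would invoke Lemma~\ref{le.sig_cart=sig_precart} to note that $x$ and $y$, being $\sigma$-cartesian, are in particular $\sigma$-precartesian, i.e.\ $x,y\in\Pcar(\sigma)$. By hypothesis $\sigma(x)=\sigma(y)$, so $x$ and $y$ satisfy exactly the assumptions of Lemma~\ref{le.sig_pre_inv}.

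Applying that lemma then yields a unique $h\in\ker(\sigma)$ with $y=xh$, and states that any such $h$ is invertible; this is precisely the assertion of the corollary, so nothing further is required. The only point worth a brief remark is that the uniqueness obtained is the one claimed: Lemma~\ref{le.sig_pre_inv} gives uniqueness of $h$ within $\ker(\sigma)$, which is exactly what the corollary asserts. (One could in fact upgrade this to uniqueness among all $h\in M$ with $y=xh$ using the cancellation built into Definition~\ref{de.cart_sigma}~ii), but that is not needed.) Consequently there is no genuine obstacle here — the substantive work was done in proving Lemmas~\ref{le.sig_cart=sig_precart} and~\ref{le.sig_pre_inv}, and the corollary merely combines them.
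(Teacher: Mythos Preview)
Your proposal is correct and follows exactly the paper's approach: the corollary is stated immediately after Lemma~\ref{le.sig_cart=sig_precart} with the remark that, with this in mind, Lemma~\ref{le.sig_pre_inv} implies the result. There is nothing to add.
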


  \begin{Le}\label{le.inv_sig_cart}
    Let $\sigma:M\to N$ be a monoid homomorphism and $x\in M$. The following
    properties hold:
    \begin{itemize}
      \item [i)] If $x$ is invertible, then $x$ is $\sigma$-cartesian.
      \item [ii)] If $\sigma(x)$ is invertible, then $x$ is $\sigma$-cartesian if
        and only if $x$ is invertible.
    \end{itemize}
  \end{Le}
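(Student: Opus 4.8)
The plan is to verify the two cartesian conditions of Definition \ref{de.cart_sigma} directly, exploiting the fact that invertible elements admit two-sided inverses. For part i), suppose $x\in M$ is invertible and take any $z\in M$ together with a factorisation $\sigma(z)=\sigma(x)v$ in $N$. I would set $y:=x^{-1}z$; then $z=xy$ holds by construction, and applying $\sigma$ gives $\sigma(x)\sigma(y)=\sigma(z)=\sigma(x)v$, whence $\sigma(y)=v$ after cancelling the invertible element $\sigma(x)$ in $N$. This establishes condition i). For condition ii), if $xy_1=xy_2$ then left-multiplication by $x^{-1}$ immediately gives $y_1=y_2$, so the uniqueness clause holds (in fact without even needing the hypothesis $\sigma(y_1)=\sigma(y_2)$). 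Hence $x$ is $\sigma$-cartesian.

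For part ii), one direction is already covered: if $x$ is invertible then by i) it is $\sigma$-cartesian, with no assumption on $\sigma(x)$ needed. So the substance is the converse: assuming $\sigma(x)$ is invertible in $N$ and $x$ is $\sigma$-cartesian, I must show $x$ is invertible in $M$. The idea is to use condition i) of Definition \ref{de.cart_sigma} with a cleverly chosen factorisation. Take $z:=1\in M$; since $\sigma(x)$ is invertible we may write $\sigma(1)=1=\sigma(x)\,\sigma(x)^{-1}$, so this is a legitimate factorisation $\sigma(z)=\sigma(x)v$ with $v=\sigma(x)^{-1}$. Condition i) then yields $y\in M$ with $xy=1$ and $\sigma(y)=\sigma(x)^{-1}$. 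Thus $x$ has a right inverse $y$.

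It remains to upgrade this right inverse to a two-sided inverse, and this is the step I expect to be the only real point requiring care. From $xy=1$ we get $x(yx)=(xy)x=x=x\cdot 1$, and $\sigma(yx)=\sigma(y)\sigma(x)=\sigma(x)^{-1}\sigma(x)=1=\sigma(1)$, so both $yx$ and $1$ lie above the same element of $N$ and satisfy $x(yx)=x\cdot 1$. Condition ii) of Definition \ref{de.cart_sigma} (the uniqueness clause for $\sigma$-cartesian $x$) then forces $yx=1$. Hence $y$ is a genuine two-sided inverse of $x$, so $x$ is invertible, completing the proof. One could alternatively invoke Corollary \ref{cor.sig_cart_uniq_h} or Lemma \ref{le.sig_pre_inv} to get invertibility of the relevant element of $\ker(\sigma)$, but the direct argument via condition ii) is cleanest. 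The whole proof is a short sequence of cancellations; no genuine obstacle arises beyond remembering to use the uniqueness axiom ii) to close the right-inverse-to-inverse gap.
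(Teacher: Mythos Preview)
Your proposal is correct and follows essentially the same approach as the paper: for part i) set $y=x^{-1}z$, and for part ii) take $z=1$, $v=\sigma(x)^{-1}$ in Definition \ref{de.cart_sigma} to produce a right inverse of $x$. Your write-up is in fact more complete than the paper's, which simply asserts that $x$ is invertible after obtaining $xy=1$; you explicitly close the right-inverse-to-two-sided-inverse gap via condition ii) of Definition \ref{de.cart_sigma}, which is the honest way to finish.
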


  \begin{proof}
    i) Take $z$ and $v$ as in Definition \ref{de.cart_sigma}. It follows that
    $v=\sigma(x^{-1}z)$ and $y=x^{-1}z$.

    ii) Assume $x$ is $\sigma$-cartesian. Take $z=1$ and $v=\sigma(x)^{-1}$ in
    Definition \ref{de.cart_sigma} to obtain that $x$ is invertible. The rest is a
    consequence of part i).
  \end{proof}
    
  \begin{Cor}
    Let $\sigma:M\to N$ be a homomorphism. We have $M^\sx\subset\Car(\sigma)$, where
    $M^\sx$ denotes the group of invertible elements of $M$.
  \end{Cor}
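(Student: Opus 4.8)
The plan is to derive the corollary as an immediate consequence of part i) of Lemma \ref{le.inv_sig_cart}. Let $\sigma:M\to N$ be a homomorphism and let $x\in M^\sx$, so that $x$ has a two-sided inverse $x^{-1}\in M$. Then $x$ is invertible in the sense used in Lemma \ref{le.inv_sig_cart}, and part i) of that lemma applies verbatim to give that $x$ is $\sigma$-cartesian, i.e. $x\in\Car(\sigma)$. Since $x\in M^\sx$ was arbitrary, this yields the inclusion $M^\sx\subseteq\Car(\sigma)$.

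The one point worth spelling out, rather than a genuine obstacle, is that the ``invertible'' hypothesis in Lemma \ref{le.inv_sig_cart} i) refers to invertibility in the monoid $M$, which is precisely membership in $M^\sx$ by definition of the group of units; so no translation is needed. One could also observe, for emphasis, that this recovers and slightly refines the chain of inclusions mentioned in the introduction: combining with Lemma \ref{le.sig_cart=sig_precart} we get $M^\sx\subseteq\Car(\sigma)\subseteq\Pcar(\sigma)\subseteq M$, and in particular the units of $\ker(\sigma)$, being units of $M$, lie in $\Car(\sigma)$.

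I do not anticipate any hard part here: the statement is a packaging of Lemma \ref{le.inv_sig_cart} i) into the language of the submonoid $M^\sx$, and the proof is a single sentence once that lemma is invoked. If a fuller proof were desired, I would simply reproduce the argument of Lemma \ref{le.inv_sig_cart} i) inline — given $z\in M$ and a factorisation $\sigma(z)=\sigma(x)v$ with $v\in N$, set $y=x^{-1}z$, so that $xy=z$ and $\sigma(y)=\sigma(x)^{-1}\sigma(z)=v$, while uniqueness of $y$ follows from left-cancellation by the invertible element $x$ — but citing the lemma is cleaner and matches the style of the surrounding corollaries.
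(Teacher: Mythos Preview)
Your proposal is correct and matches the paper's approach: the corollary is stated without proof in the paper, being an immediate consequence of part i) of Lemma \ref{le.inv_sig_cart}, exactly as you derive it. Your additional remarks about the chain of inclusions and the inline reproduction of the lemma's argument are accurate but unnecessary for the corollary itself.
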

    
  The requirement on $x$ to be $\sigma$-cartesian in Lemma \ref{le.inv_sig_cart} ii) can
  not be replaced by it being $\sigma$-precartesian, as Example \ref{ex.N_to_cycl} below
  shows.
    
  \begin{De}
    A homomorphism $\sigma:M\to N$ is called a \emph{fibration} if the restricted
    map $\Car(\sigma)\to N$ is surjective. This can be equivalently expressed as the
    existance of a $\sigma$-cartesian element above every element of $N$.
  \end{De}

  A homomorphism $f:M\to N$ is called an \emph{op-fibration} if $f$ is a fibration when
  considered as a homomorphism of opposite monoids. Grothendieck used the word
  \emph{cofibration} instead if op-fibration, see \cite{sga1}. Due to a clash of
  terminologies from homotopy theory, the term op-fibration is more commonly used
  nowadays.

  When restricted to the monoid world, the terms right- and left fibration are,
  arguably, preferable. These are naturally coming from right- and left monoid actions.
  What we called a fibration would then be called a right fibration in out setting. The
  op-fibration would, in turn, be the left fibration. Likewise for the terms cartesian
  and precartesian.

  Any group epimorphism is both a fibration and co-fibration by Lemma
  \ref{le.inv_sig_cart}. By Corollary \ref{cor.sig_cart_uniq_h} and Section
  \ref{sec.precart}, any fibration is a prefibration. The converse is not true, as the
  following easy example shows.

  \begin{Ex}\label{ex.N_to_cycl}
    The homomorphism $\N\xto{\sigma}C_n=\{ 1,t,\cdots,t^{n-1}\}$, from the additive
    monoid of natural numbers to the cyclic group of order $n$, given by $\sigma(1)=t$,
    is a prefibration. On the other hand,
    \[\Pcar(\sigma)=\{0,1,\cdots, n-1\}\]
    and
    \[\Car(\sigma)=\{0\}.\]
    The first equality is trivial. The second follows from part ii) of Lemma
    \ref{le.inv_sig_cart}. Thus, $\sigma$ is not a fibration.
  \end{Ex}

  \begin{Le}\label{le.prod_cart}
    Let $\sigma :M\to N$ be a monoid homomorphism.
    \begin{itemize}
      \item[i)] If $x_1$ is $\sigma$-cartesian and $x_2$ is $\sigma$-precartesian,
        then $x_1x_2$ is also $\sigma$-precartesian.
      \item[ii)] If $x_1$ and $x_2$ are $\sigma$-cartesian, then $x_1x_2$ is
        $\sigma$-cartesian.
    \end{itemize}
  \end{Le}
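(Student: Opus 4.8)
The plan is to prove both parts directly from the definitions, chaining the factorisation clauses of $x_1$ and $x_2$ and then chaining the corresponding uniqueness clauses. For part i), I would begin with an element $z\in M$ satisfying $\sigma(z)=\sigma(x_1x_2)=\sigma(x_1)\sigma(x_2)$ and apply condition i) of Definition \ref{de.cart_sigma} to the $\sigma$-cartesian element $x_1$, using the factorisation $\sigma(z)=\sigma(x_1)\cdot\sigma(x_2)$. This yields $w\in M$ with $z=x_1w$ and $\sigma(w)=\sigma(x_2)$. Since $x_2$ is $\sigma$-precartesian and $\sigma(w)=\sigma(x_2)$, Definition \ref{de.sig_precar} produces a unique $h\in\ker(\sigma)$ with $w=x_2h$, hence $z=x_1x_2h$, giving the existence part of precartesianness for $x_1x_2$.

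For uniqueness in part i), I would suppose $x_1x_2h=x_1x_2h'$ with $h,h'\in\ker(\sigma)$. Then $x_1(x_2h)=x_1(x_2h')$ and $\sigma(x_2h)=\sigma(x_2)=\sigma(x_2h')$, so condition ii) of Definition \ref{de.cart_sigma} applied to the cartesian element $x_1$ gives $x_2h=x_2h'$; the weak cancellation property of the $\sigma$-precartesian element $x_2$ then forces $h=h'$.

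For part ii) I would run the same argument but carry an extra factor $v$ along. Given $z\in M$ and a factorisation $\sigma(z)=\sigma(x_1x_2)v=\sigma(x_1)\bigl(\sigma(x_2)v\bigr)$, condition i) for $x_1$ provides $w\in M$ with $z=x_1w$ and $\sigma(w)=\sigma(x_2)v$; condition i) for $x_2$ applied to the factorisation $\sigma(w)=\sigma(x_2)\cdot v$ provides $y\in M$ with $w=x_2y$ and $\sigma(y)=v$, so $z=x_1x_2y$ and $\sigma(y)=v$. For the uniqueness clause ii), if $x_1x_2y_1=x_1x_2y_2$ with $\sigma(y_1)=\sigma(y_2)$, then $\sigma(x_2y_1)=\sigma(x_2)\sigma(y_1)=\sigma(x_2)\sigma(y_2)=\sigma(x_2y_2)$, so ii) for $x_1$ gives $x_2y_1=x_2y_2$, and then ii) for $x_2$ gives $y_1=y_2$. (Part ii) together with Lemma \ref{le.sig_cart=sig_precart} would also re-prove part i), but it seems cleaner to treat the mixed case separately, since there the second factor is only precartesian.)

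The arguments are essentially bookkeeping, so I do not anticipate a genuine obstacle. The only point that needs a little care is checking, in each uniqueness step, that the intermediate elements $x_2h$ (resp. $x_2y_i$) lie above the same element of $N$, so that the relevant uniqueness clause of Definition \ref{de.cart_sigma} or Definition \ref{de.sig_precar} actually applies; this is precisely where the hypothesis $\sigma(y_1)=\sigma(y_2)$ is used, and it explains why it cannot be dropped. Thus the ``hard part'' amounts to nothing more than applying the two-step cancellation in the correct order.
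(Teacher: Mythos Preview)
Your proposal is correct and follows essentially the same approach as the paper's own proof: in each part you factor through $x_1$ first using the cartesian property, then through $x_2$, and for uniqueness you cancel $x_1$ first and then $x_2$. The only difference is notational (you write $w$ where the paper writes $y_1$) and that you are slightly more explicit about verifying the hypothesis $\sigma(x_2h)=\sigma(x_2h')$ before invoking condition~ii) for $x_1$; the paper leaves this implicit.
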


  \begin{proof}
    i) Set $x=x_1x_2$ and assume $\sigma(x)=\sigma(z)$. Thus $\sigma(z)=\sigma(x_1)v$,
    where $v=\sigma(x_2)$. Since $x_1$ is $\sigma$-cartesian, we can find an element
    $y_1\in M$ such that $z=x_1y_1$ and $\sigma(y_1)=v=\sigma(x_2)$. From $x_2$ being
    $\sigma$-precartesian, the existence of an $h\in\ker(\sigma)$ for which $y_1=x_2h$
    can be deduced. It follows $z=x_1x_2h$.

    We are left with having to show the uniqueness of $h$. For this, assume
    $x_1x_2h=x_1x_2h'$. Since $x_1$ is $\sigma$-cartesian, we obtain $x_2h=x_2h'$. It
    follows that $h=h'$ as $x_2$ is $\sigma$-precartesian.

    ii) Assume $x_1$ and $x_2$ are $\sigma$-cartesian elements above $u_1,u_2\in N$. We
    have to show that $x=x_1x_2$ is also $\sigma$-cartesian above $u=u_1u_2$. We have to
    check Conditions i) and ii) of Definition \ref{de.cart_sigma}. Take $z\in M$ such
    that $\sigma(z)=\sigma(x)v$. Thus $\sigma(z)=u_1u_2v$. Since $x_1$ is
    $\sigma$-cartesian, it follows that there exists a $y_1$ such that
    $\sigma(y_1)=u_2v$ and $z=x_1y_1$. Since $x_2$ is also $\sigma$-cartesian over
    $u_2$, there exists a $y_2\in M$ such that $\sigma(y_2)=v$ and $y_1=x_2y_2$. Thus
    \[z=x_1y_1=x_1x_2y_2=xy_2\]
    and Condition i) holds. Assume $xy=xy'$ and $\sigma(y)=\sigma(y')$. Thus
    $x_1x_2y=x_1x_2y'$. Since $x_1$ is $\sigma$-cartesian, we obtain $x_2y=x_2y'$, and
    by the same reasoning $y=y'$.
  \end{proof}

  \begin{Cor}
    For any monoid homomorphism $\sigma:M\to N$, the subset $\Car(\sigma)$ is a
    submonoid of $M$ which contains the subgroup $M^\sx$. Moreover, $\Pcar(\sigma)$ is
    a left $\Car(\sigma)$-subset of $M$.
  \end{Cor}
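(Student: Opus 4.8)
The plan is simply to assemble the statement from the lemmas already proved in this subsection; no new argument is required. First I would verify that $\Car(\sigma)$ is a submonoid of $M$. Containment of the identity is the observation $1\in\Car(\sigma)$ recorded after Definition \ref{de.cart_sigma}: concretely, for $z\in M$ with $\sigma(z)=\sigma(1)v=v$ one takes $y=z$ in condition i), while condition ii) is automatic since $1\cdot y_1=1\cdot y_2$ already forces $y_1=y_2$; alternatively one applies Lemma \ref{le.inv_sig_cart} i) with $x=1$. Closure under multiplication is exactly part ii) of Lemma \ref{le.prod_cart}. Hence $\Car(\sigma)$ is a submonoid.

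Next, that $M^\sx\subseteq\Car(\sigma)$ is immediate from Lemma \ref{le.inv_sig_cart} i): every invertible element of $M$ is $\sigma$-cartesian. This is precisely the Corollary stated just above Lemma \ref{le.prod_cart}, which I would re-use verbatim. Thus the submonoid $\Car(\sigma)$ contains the subgroup $M^\sx$.

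Finally, to see that $\Pcar(\sigma)$ is a left $\Car(\sigma)$-subset of $M$, I would first note that $M$ carries the left $\Car(\sigma)$-action obtained by restricting the multiplication $\Car(\sigma)\times M\to M$; this is well defined and satisfies the unit and associativity axioms of an action because $\Car(\sigma)$ is a submonoid (just established) and $M$ is a monoid. It then remains only to check that this action preserves the subset $\Pcar(\sigma)$, i.e. that $cx\in\Pcar(\sigma)$ whenever $c\in\Car(\sigma)$ and $x\in\Pcar(\sigma)$ — and that is exactly part i) of Lemma \ref{le.prod_cart}. Since each of the three assertions reduces directly to a result already in hand, there is no genuine obstacle: this Corollary is a bookkeeping summary of Lemmas \ref{le.inv_sig_cart} and \ref{le.prod_cart} rather than a new theorem.
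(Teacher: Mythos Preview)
Your proposal is correct and matches the paper's approach exactly: the Corollary is stated without proof in the paper precisely because it is an immediate bookkeeping consequence of Lemmas \ref{le.inv_sig_cart} and \ref{le.prod_cart}, and you have spelled out that deduction accurately.
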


  The subset $\Pcar(\sigma)$ is not a submonoid of $M$ in general, as Example
  \ref{ex.N_to_cycl} shows.

  \begin{Le}\label{le.sig_car=pcar}
    Let $\sigma:M\to N$ be a fibration. Then
    \[\Car(\sigma)=\Pcar(\sigma).\]
  \end{Le}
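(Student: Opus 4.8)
The plan is to prove the nontrivial inclusion $\Pcar(\sigma) \subseteq \Car(\sigma)$, since $\Car(\sigma) \subseteq \Pcar(\sigma)$ is already Lemma \ref{le.sig_cart=sig_precart}. So let $x \in \Pcar(\sigma)$ be a $\sigma$-precartesian element, say above $n = \sigma(x) \in N$. Because $\sigma$ is a fibration, there exists a $\sigma$-cartesian element $\bar{x} \in \Car(\sigma)$ above the same $n$. The idea is to transfer the cartesian property from $\bar{x}$ to $x$ using the comparison between the two supplied by Lemma \ref{le.sig_pre_inv}: since $x$ and $\bar{x}$ are both $\sigma$-precartesian (the latter by Lemma \ref{le.sig_cart=sig_precart}) and lie above the same $n$, there is a unique $h \in \ker(\sigma)$ with $x = \bar{x}h$, and moreover $h$ is invertible.

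Next I would verify directly that $x = \bar{x}h$ is $\sigma$-cartesian, checking the two conditions of Definition \ref{de.cart_sigma}. For Condition i), suppose $z \in M$ and $\sigma(z) = \sigma(x)v = nv$ for some $v \in N$. Since $\bar{x}$ is $\sigma$-cartesian above $n$, there is $y' \in M$ with $z = \bar{x}y'$ and $\sigma(y') = v$. Writing $y := h^{-1}y'$, which makes sense as $h \in \ker(\sigma)$ is invertible, we get $xy = \bar{x}h h^{-1} y' = \bar{x}y' = z$ and $\sigma(y) = \sigma(h^{-1})\sigma(y') = v$, as required. For Condition ii), suppose $xy_1 = xy_2$ with $\sigma(y_1) = \sigma(y_2)$. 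Then $\bar{x}(hy_1) = \bar{x}(hy_2)$, and since $\sigma(hy_1) = \sigma(y_1) = \sigma(y_2) = \sigma(hy_2)$, the cartesianness of $\bar{x}$ forces $hy_1 = hy_2$; multiplying by $h^{-1}$ gives $y_1 = y_2$.

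The main obstacle, such as it is, lies in the very first step: one must produce a $\sigma$-cartesian $\bar{x}$ above exactly $n = \sigma(x)$, which is precisely what the fibration hypothesis grants, and then one needs the invertibility of the comparison element $h$ — without it, Condition i) could fail since we could not solve $\sigma(y) = v$ by peeling off $h^{-1}$. Both are handled by results already available (the fibration definition and Lemma \ref{le.sig_pre_inv}), so the remainder is the routine verification above. Alternatively, one could observe that $x = \bar{x}h$ with $\bar{x}$ cartesian and $h$ invertible, hence $h$ cartesian by Lemma \ref{le.inv_sig_cart} i), so that $x$ is a product of two cartesian elements and therefore cartesian by Lemma \ref{le.prod_cart} ii); this gives a shorter route once the comparison $x = \bar{x}h$ with $h \in M^\sx$ is in hand.
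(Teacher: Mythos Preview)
Your proposal is correct and follows the same overall strategy as the paper: pick a $\sigma$-cartesian lift $\bar{x}$ above $\sigma(x)$, use Lemma~\ref{le.sig_pre_inv} to write $x=\bar{x}h$ with $h$ invertible, and conclude that $x$ is cartesian. Your ``alternative'' route via Lemma~\ref{le.inv_sig_cart}~i) and Lemma~\ref{le.prod_cart}~ii) is in fact exactly the argument the paper gives; your primary direct verification of Definition~\ref{de.cart_sigma} simply unwinds that composite lemma in this special case.
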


  \begin{proof}
    By Corollary \ref{le.sig_cart=sig_precart}, any $\sigma$-cartesian element is
    $\sigma$-precartesian. Conversely, assume $\sigma$ is a fibration and $x\in M$ is
    $\sigma$-precartesian. Set $\sigma(x)=v$. By assumption, there exist a
    $\sigma$-cartesian $u\in M$ above $v$. Using Lemma \ref{le.sig_cart=sig_precart}
    again, we see that $u$ is $\sigma$-precartesian. Applying Lemma \ref{le.sig_pre_inv}
    gives us $x=vh$, where $h\in\ker(\sigma)$ is invertible. By part i) of Lemma
    \ref{le.inv_sig_cart}, $h$ is also $\sigma$-cartesian. Likewise, by Lemma
    \ref{le.prod_cart}, $x$ must be $\sigma$-cartesian as well.
  \end{proof}

  \begin{Pro}\label{pro.pref_prod=pref}
    Let $\sigma:M\to N$ be a prefibration. Then $\sigma$ is a fibration if and
    only if the product of two $\sigma$-precartesian elements is again
    $\sigma$-precartesian.
  \end{Pro}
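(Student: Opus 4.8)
The plan is to prove both implications at once by identifying, under each hypothesis, the $\sigma$-precartesian elements with the $\sigma$-cartesian ones. For the forward direction, suppose $\sigma$ is a fibration. Then Lemma \ref{le.sig_car=pcar} gives $\Car(\sigma)=\Pcar(\sigma)$, while Lemma \ref{le.prod_cart}~ii) says that the product of two $\sigma$-cartesian elements is again $\sigma$-cartesian; transporting this equality, the product of two $\sigma$-precartesian elements is $\sigma$-precartesian. (The prefibration hypothesis plays no role here beyond what ``fibration'' already gives.)

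For the converse, assume $\sigma$ is a prefibration and that $\Pcar(\sigma)$ is closed under multiplication. I would show directly that \emph{every} $\sigma$-precartesian element $x$, say above $n=\sigma(x)$, already satisfies conditions i) and ii) of Definition \ref{de.cart_sigma}; since $\sigma$ is a prefibration, this produces a $\sigma$-cartesian element above every $n\in N$, so $\sigma$ is a fibration (and in fact $\Car(\sigma)=\Pcar(\sigma)$, using also Lemma \ref{le.sig_cart=sig_precart}). For condition i): given $z\in M$ and a factorisation $\sigma(z)=nv$, use the prefibration hypothesis to choose a $\sigma$-precartesian element $w$ above $v$. By the closure assumption $xw$ is $\sigma$-precartesian, and $\sigma(xw)=nv=\sigma(z)$, so Definition \ref{de.sig_precar} yields a unique $h\in\ker(\sigma)$ with $z=(xw)h=x(wh)$; then $y:=wh$ works, since $\sigma(y)=v$.

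For condition ii): suppose $xy_1=xy_2$ with $\sigma(y_1)=\sigma(y_2)=:v$. Again pick a $\sigma$-precartesian element $w$ above $v$. Since $\sigma(y_i)=\sigma(w)$, Definition \ref{de.sig_precar} gives unique elements $k_i\in\ker(\sigma)$ with $y_i=wk_i$. Then $(xw)k_1=xy_1=xy_2=(xw)k_2$, and because $xw$ is $\sigma$-precartesian the weak cancellation property forces $k_1=k_2$, hence $y_1=y_2$. This completes the verification that $x$ is $\sigma$-cartesian, and hence the converse.

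I expect the only real subtlety to be condition ii): the weak cancellation property compares elements of $\ker(\sigma)$ only, whereas condition ii) must compare arbitrary $y_1,y_2$ with equal $\sigma$-image. The device of factoring each $y_i$ through a $\sigma$-precartesian element $w$ above $v=\sigma(y_i)$ is exactly what reduces the general situation to weak cancellation, and it is precisely at this point that both hypotheses are genuinely used — the prefibration hypothesis to guarantee such a $w$ exists, and the multiplicative-closure hypothesis to guarantee $xw$ is $\sigma$-precartesian. Everything else is a routine unwinding of the definitions.
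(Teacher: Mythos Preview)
Your proposal is correct and follows essentially the same approach as the paper. The paper dismisses the forward direction in one line (``only the if part needs to be checked''), implicitly relying on the same two lemmas you cite explicitly; for the converse it does exactly what you do --- picks a $\sigma$-precartesian element above $v$, uses closure of $\Pcar(\sigma)$ to know the product with $x$ is precartesian, and then factors (for condition~i)) or reduces to weak cancellation (for condition~ii)) in the same way.
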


  \begin{proof}
    Only the if part needs to be checked. Assume $\sigma$ is a prefibration and
    the product of two $\sigma$-cartesian elements is again $\sigma$-cartesian. It
    suffices to prove that any $\sigma$-precartesian element $x\in M$ is
    $\sigma$-cartesian. Assume $z\in M$, $v\in N$ are given such that
    $\sigma(z)=\sigma(x)v$. There exists a precartesian element $u\in M$ for which
    $\sigma(u)=v$. It follows that $\sigma(z)=\sigma(xu)$. By assumption $xu$ is
    precartesian and it follows that $z=xuh$ for an element $h\in \ker(\sigma)$. Thus,
    Condition i) of Definition \ref{de.cart_sigma} holds with $y=uh$.

    For Condition ii) we assume that $xy_1=xy_2$ and $\sigma(y_1)=\sigma(y_2).$ By
    assumption, there exists a $\sigma$-precartesian $y$ such that
    $\sigma(y)=\sigma(y_1)=\sigma(y_2).$ Then $y_1=yh_1$ and $y_2h_2$ for some elements
    $h_1,h_2\in \ker(\sigma)$. We get $xyh_1=xyh_2$. Since $xy$ is $\sigma$-precartesian
    we obtain $h_1=h_2$. It follows that $y_1=y_2$ and hence, Condition ii) of
    Definition \ref{de.cart_sigma} also holds.
  \end{proof}

  \begin{Le}
    Let $\sigma:M\to N$ be a surjective prefibration. If $A=\ker(\sigma)$ is a group,
    any element of $M$ is $\sigma$-cartesian. In particular, $\sigma$ is a fibration.
  \end{Le}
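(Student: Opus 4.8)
The plan is to show that every $x\in M$ is $\sigma$-cartesian by verifying Conditions i) and ii) of Definition \ref{de.cart_sigma} directly, exploiting that $A=\ker(\sigma)$ is a group and that $\sigma$ is a surjective prefibration. First I would fix $x\in M$ and set $n=\sigma(x)$. By the prefibration hypothesis there exists a $\sigma$-precartesian element $\tilde x\in M$ with $\sigma(\tilde x)=n$; since $\sigma(x)=\sigma(\tilde x)$, Definition \ref{de.sig_precar} yields a unique $h\in A$ with $x=\tilde x h$. Because $A$ is a group, $h$ is invertible, so $\tilde x = x h^{-1}$; I claim this already forces $x$ itself to be $\sigma$-precartesian. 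Indeed, if $\sigma(z)=\sigma(x)=n=\sigma(\tilde x)$, precartesianness of $\tilde x$ gives a unique $k\in A$ with $z=\tilde x k = x h^{-1}k$, and $h^{-1}k\in A$; uniqueness of the $A$-factor for $x$ follows from the weak cancellation property of $\tilde x$ together with invertibility of $h$. So every element of $M$ is $\sigma$-precartesian.

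Next I would upgrade $\sigma$-precartesian to $\sigma$-cartesian using surjectivity of $\sigma$ and the fact (now established) that $\Pcar(\sigma)=M$. Given $x\in M$, $z\in M$ and a factorisation $\sigma(z)=\sigma(x)v$ with $v\in N$, surjectivity of $\sigma$ provides some $w\in M$ with $\sigma(w)=v$. Then $\sigma(xw)=\sigma(x)v=\sigma(z)$, and since $xw\in\Pcar(\sigma)$ there is a (unique) $h\in A$ with $z=xwh$; setting $y=wh$ gives $z=xy$ and $\sigma(y)=\sigma(w)\sigma(h)=v$, which is Condition i). For Condition ii), suppose $xy_1=xy_2$ with $\sigma(y_1)=\sigma(y_2)=:v$. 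Pick $w\in M$ with $\sigma(w)=v$; since $xw$ is precartesian and $\sigma(xy_i)=\sigma(x)v=\sigma(xw)$, we get $y_i = wh_i$ for unique $h_i\in A$, so $xwh_1=xwh_2$; the weak cancellation property of the precartesian element $xw$ gives $h_1=h_2$, hence $y_1=y_2$. Thus $x$ is $\sigma$-cartesian, so $\Car(\sigma)=M$, and in particular the restriction $\Car(\sigma)\to N$ is surjective, i.e.\ $\sigma$ is a fibration.

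I expect the main subtlety to be the first paragraph: reducing to a precartesian witness $\tilde x$ above $n$ and transferring its precartesian property back to $x$ along the invertible element $h$. The key point is that multiplying a $\sigma$-precartesian element on the right by an invertible element of $\ker(\sigma)$ preserves $\sigma$-precartesianness — this is essentially a special case of the interplay already seen in Lemma \ref{le.sig_pre_inv} and Lemma \ref{le.prod_cart}, and could alternatively be invoked directly (an invertible $h\in A$ is $\sigma$-cartesian by Lemma \ref{le.inv_sig_cart} i), and one checks $\tilde x h$ is precartesian by the argument above, or cites the relevant closure property). Everything after that is a routine unwinding of the two defining conditions, and no further hypotheses on $M$ or $N$ are needed beyond surjectivity of $\sigma$, the prefibration condition, and $A$ being a group.
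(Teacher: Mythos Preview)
Your argument is correct in outline and very close to the paper's, but there is a small slip in the verification of Condition~ii). You write: ``since $xw$ is precartesian and $\sigma(xy_i)=\sigma(xw)$, we get $y_i = wh_i$''. Precartesianness of $xw$ only yields $xy_i=(xw)h_i$, not $y_i=wh_i$. What you actually need here is that $w$ itself is precartesian (which you have, since every element is): from $\sigma(y_i)=v=\sigma(w)$ you obtain $y_i=wh_i$ with $h_i\in A$, and then $xy_1=xy_2$ gives $xwh_1=xwh_2$, whence $h_1=h_2$ by the weak cancellation property of the precartesian element $xw$, so $y_1=y_2$. With this correction the proof goes through.

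As for the comparison: the paper proceeds slightly differently. After establishing (exactly as you do) that every element is $\sigma$-precartesian, it invokes Proposition~\ref{pro.pref_prod=pref} to conclude immediately that $\sigma$ is a fibration, and then reuses the decomposition $x=ua$ with $u$ now $\sigma$-\emph{cartesian}; Condition~ii) then follows from the cancellation property of $u$ together with invertibility of $a\in A$. Your route avoids Proposition~\ref{pro.pref_prod=pref} entirely and works only with precartesianness, at the cost of needing the product $xw$ to be precartesian---which is free once $\Pcar(\sigma)=M$. Both approaches are short; yours is marginally more self-contained, while the paper's leans on the already-proved closure criterion.
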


  \begin{proof}
    Our first assertion is that any element $x\in M$ is $\sigma$-precartesian. Denote
    $n=\sigma(x)$ and assume $u$ is $\sigma$-precartesian over $n$. If
    $\sigma(y)=\sigma(x)$, then $x=ua$ and $y=ub$ for some $a,b\in A$. It follows that
    $y=xc$ for $c=a^{-1}b$. For the uniqueness of $c$, assume $y=xd$, $d\in A$. We
    obtain $ub=uad$ and by the weak cancellation property, we get $b=ad$. Hence
    $d=a^{-1}b=c$, and the claim follows.

    By Proposition \ref{pro.pref_prod=pref}, $\sigma$ is a fibration. This allows us to
    assume $u$ is $\sigma$-cartesian in the previous argument. If $xy_1=xy_2$, we can
    deduce that $uay_1=uay_2$. Since $u$ is $\sigma$-cartesian, $ay_1=ay_2$ follows.
    From the invertibility of $a\in A$ we obtain $y_1=y_2$. This implies Condition ii)
    of Definition \ref{de.cart_sigma}.

    For Property i), assume $\sigma(z)=\sigma(x)v$. Since $\sigma$ is surjective, we can
    write $v=\sigma(w)$ for some $w\in M$. We now have $\sigma(z)=\sigma(xw)$ and since
    every element is $\sigma$-precartesian, $z=xwa$ for some $a\in A$. By putting
    $y=wa$, we see that Condition i) must hold as well.
  \end{proof}

  \begin{Le}
    Let $\sigma:M\to N$ be a surjective homomorphism and $A=\ker(\sigma)$. If all
    elements of $M$ are $\sigma$-precartesian, then $A$ is a group and $\sigma$ is a
    fibration.
  \end{Le}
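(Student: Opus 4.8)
The plan is to first show that $A$ is a group and then reduce the fibration claim to the preceding lemma. For the first part, let $a\in A$ be arbitrary. By hypothesis both $1$ and $a$ are $\sigma$-precartesian, and $\sigma(1)=1=\sigma(a)$, so Lemma~\ref{le.sig_pre_inv} applies to the pair $(1,a)$: there is a (unique) $h\in\ker(\sigma)$ with $a=1\cdot h=h$, and any such $h$ is necessarily invertible. Hence $a=h$ is invertible, and since $a\in A$ was arbitrary, $A$ is a group.

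Next I would record that $\sigma$ is a prefibration: by hypothesis $\Pcar(\sigma)=M$, and $\sigma$ is surjective by assumption, so the restriction of $\sigma$ to $\Pcar(\sigma)$ is surjective onto $N$. Thus $\sigma$ is a surjective prefibration whose kernel $A$ is a group, and the previous lemma then yields that every element of $M$ is $\sigma$-cartesian. In particular the restricted map $\Car(\sigma)\to N$ is surjective, so $\sigma$ is a fibration, as claimed.

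There is essentially no obstacle here. The only real observation is that Lemma~\ref{le.sig_pre_inv}, applied to $x=1$ and $y=a$, already forces every element of $\ker(\sigma)$ to be invertible; once that is in hand, the fibration statement is exactly the content of the preceding lemma, with the sole minor bookkeeping step of checking that $\sigma$ qualifies as a prefibration before invoking it.
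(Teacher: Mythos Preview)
Your proof is correct but proceeds in the reverse order from the paper's. The paper first observes that $\Pcar(\sigma)=M$ is trivially closed under products, so Proposition~\ref{pro.pref_prod=pref} gives the fibration property immediately; it then shows $A$ is a group by a direct argument from the precartesian definition (for $x\in A$, use that $x$ is precartesian over $1$ to get a one-sided inverse, then symmetrise). You instead first get $A$ a group by a clean appeal to Lemma~\ref{le.sig_pre_inv} with $x=1$, $y=a$, and then invoke the immediately preceding lemma for the fibration claim. Your route is arguably tidier: the invertibility of kernel elements is already packaged in Lemma~\ref{le.sig_pre_inv}, so there is no need to reprove it by hand, and the fibration step then becomes a one-line citation. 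The paper's route, on the other hand, makes the fibration part self-contained (it does not rely on the preceding lemma) at the cost of a slightly longer argument for the group claim.
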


  \begin{proof}
    Since $M=\Pcar(\sigma)$, it follows that $\sigma$ is a prefibration, and hence it is
    even fibration, thanks to Proposition \ref{pro.pref_prod=pref}. Take now $x\in M$.
    Since any element is  $\sigma$-precartesian and $\sigma(1)=\sigma(x)$, we see that
    $1=xy$ for uniquely defined $y$. By the same reason there exists $z$ such that
    $yz=1$. Since $x=x(yz)=(xy)z=z$ we see that $yx=1$ and hence $A$ is a group.
  \end{proof}

  \begin{Le}
    Let $L\xto{\rho}M$ and $M\xto{\sigma}N$ be homomorphisms of monoids.
    \begin{itemize}
      \item[i)] If $\rho$ is a fibration and $\sigma$ is a prefibration, then $\sigma
        \rho:L\to N$ is also a prefibration.
      \item[ii)] If $L\xto{\rho} M$ and $M\xto{\sigma} N$ are fibrations, then
        $L\xto{\sigma \rho} N$ is also a fibration.
    \end{itemize}
  \end{Le}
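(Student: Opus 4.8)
The plan is to prove both statements by directly verifying the defining conditions of (pre)cartesian elements for the composite $\sigma\rho$, tracing a factorisation downstairs through $\sigma$ first and then through $\rho$. The key observation is that the two levels of the tower interact cleanly because $\ker(\sigma\rho)=\rho^{-1}(\ker\sigma)$, and because a fibration supplies cartesian lifts that themselves satisfy the strong uniqueness in Definition \ref{de.cart_sigma} ii).

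For part i), let $n\in N$ be arbitrary; I first pick a $\sigma$-precartesian $m\in M$ above $n$ (using that $\sigma$ is a prefibration), then a $\rho$-cartesian $\ell\in L$ above $m$ (using that $\rho$ is a fibration, in particular surjective on $\Car(\rho)$). The claim is that $\ell$ is $\sigma\rho$-precartesian above $n$. So suppose $\sigma\rho(\ell')=\sigma\rho(\ell)=n$ for some $\ell'\in L$. Then $\sigma(\rho(\ell'))=\sigma(\rho(\ell))=\sigma(m)$, so since $m$ is $\sigma$-precartesian there is a unique $a\in\ker(\sigma)$ with $\rho(\ell')=ma$. Now I use Condition i) of Definition \ref{de.cart_sigma} for the $\rho$-cartesian element $\ell$ with $z=\ell'$ and the factorisation $\rho(\ell')=\rho(\ell)\cdot a$: this produces $b\in L$ with $\ell'=\ell b$ and $\rho(b)=a\in\ker(\sigma)$, i.e.\ $b\in\ker(\sigma\rho)$. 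For uniqueness: if $\ell b=\ell b'$ with $b,b'\in\ker(\sigma\rho)$, then $\rho(b),\rho(b')\in\ker(\sigma)$ and $m\rho(b)=\rho(\ell b)=\rho(\ell b')=m\rho(b')$, so $\rho(b)=\rho(b')$ by the weak cancellation property for $m$; then $\ell b=\ell b'$ together with $\rho(b)=\rho(b')$ and Condition ii) for the $\rho$-cartesian $\ell$ force $b=b'$. Hence $\ell$ is $\sigma\rho$-precartesian, and since $n$ was arbitrary, $\sigma\rho$ is a prefibration.

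For part ii), I argue the same way but now every lift can be taken cartesian. Given $n\in N$, pick a $\sigma$-cartesian $m\in M$ above $n$, then a $\rho$-cartesian $\ell\in L$ above $m$; I claim $\ell$ is $\sigma\rho$-cartesian above $n$. For Condition i), suppose $\sigma\rho(\ell')=\sigma\rho(\ell)\,v$ for some $\ell'\in L$, $v\in N$. Writing $\sigma(\rho(\ell'))=\sigma(m)v$ and using that $m$ is $\sigma$-cartesian gives $c\in M$ with $\rho(\ell')=mc$ and $\sigma(c)=v$; then using that $\ell$ is $\rho$-cartesian with the factorisation $\rho(\ell')=\rho(\ell)\cdot c$ gives $d\in L$ with $\ell'=\ell d$ and $\rho(d)=c$, whence $\sigma\rho(d)=\sigma(c)=v$, as required. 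For Condition ii), if $\ell d=\ell d'$ with $\sigma\rho(d)=\sigma\rho(d')$, then applying $\rho$ and using that $\ell$ is $\rho$-cartesian reduces this to $d=d'$ once we know $\rho(d)=\rho(d')$; but $m\rho(d)=m\rho(d')$ and $\sigma(\rho(d))=\sigma(\rho(d'))$, so $\rho(d)=\rho(d')$ since $m$ is $\sigma$-cartesian. Thus $\ell$ is $\sigma\rho$-cartesian above $n$, so $\sigma\rho$ is a fibration. (Alternatively, one could deduce ii) from i) plus Proposition \ref{pro.pref_prod=pref} and Lemma \ref{le.prod_cart}, but the direct verification is cleaner and self-contained.)

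The only mildly delicate point — the part I would be most careful about — is keeping the two uniqueness clauses straight: the uniqueness in the composite comes from combining the weak cancellation property of the $\sigma$-(pre)cartesian element downstairs with Condition ii) of the $\rho$-cartesian element upstairs, and one must apply them in that order (first push down by $\rho$ and cancel against $m$, then cancel against $\ell$). Everything else is a routine unwinding of definitions; in particular surjectivity of $\sigma\rho$ in the relevant fibre is automatic from the existence of the successive lifts, so no separate surjectivity argument is needed.
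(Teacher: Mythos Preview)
Your proof is correct and follows essentially the same approach as the paper's own proof: in both parts you pick a $\sigma$-(pre)cartesian element $m$ above $n$, lift it to a $\rho$-cartesian element in $L$, and then verify the defining conditions for the composite by first pushing down via $\rho$ to cancel against $m$ and then using the $\rho$-cartesian uniqueness upstairs. The arguments and their order match the paper's almost line by line, differing only in notation and your added commentary on the alternative route for part~ii).
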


  \begin{proof}
    i) Take $n\in N$ and choose a $\sigma$-precartesian element $m\in M$ such that
    $\sigma(m)=n$. Next, choose a $\rho$-cartesian $x\in L$ above $m$. Clearly,
    $\sigma\rho(x)=n$. We wish to show that such an $x$ is $\sigma\rho$-precartesian.
    Assume $\sigma\rho(x)=\sigma\rho(z).$ Both $m=\rho(x)$ and $\rho(z)$ are above $n$.
    Since $m$ is $\sigma$-precartesian, we have $\rho(z)=mh=\rho(x)h$, where $h\in
    \ker(\sigma)$. As $x$ is $\rho$-cartesian, we can find a $u\in L$ such that $z=xu$
    and $\rho(u)=h$. It follows that $u\in \ker(\sigma\rho)$.

    It remains to show that $u$ is unique with these properties. Assume $z=xu=xu'$,
    where $\sigma\rho(u')=1$. Then $\rho(x)\rho(u')=\rho(x)\rho(u)$. This is the same as
    $m\rho(u)=m\rho(u')$. Since $m$ is $\sigma$-precartesian, we obtain
    $\rho(u)=\rho(u')$. This, together with the equality $xu=xu'$, implies $u$=$u'$,
    since $x$ is $\rho$-cartesian.

    ii) Take $n\in N$ and choose a $\sigma$-cartesian element $m\in M$ such that
    $\sigma(m)=n$. Next, choose a $\rho$-cartesian $x\in L$ such that $\rho(x)=m$. Let
    us prove that $x$ is also $\sigma\rho$-cartesian. Take $z\in L$ and $v\in N$ such
    that $\sigma\rho (z)=\sigma\rho (x)v$. Since $m=\rho(x)$ is $\sigma$-cartesian,
    there exists a $w\in M$ such that $\rho(z)=\rho(x)w$ and $\sigma(w)=v$. Using the
    fact that $x$ is $\rho$-cartesian yields an element $u\in L$ such that $z=xu$ and
    $\rho(u)=w$. It follows that $\sigma\rho(u)=v$.

    Thus, Condition i) of Definition \ref{de.cart_sigma} holds. Next, assume $xu=xu'$
    and $\sigma\rho(u)=\sigma\rho(u')$. Since $\rho(x)=m$ is cartesian, we obtain
    $\rho(u)=\rho(u')$. Since $x$ is $\rho$-cartesian, we see that $u=u'$ and Condition
    ii) of Definition \ref{de.cart_sigma} holds as well. The result follows.
  \end{proof}

  \begin{Le}
    If $M$ and $N$ are monoids, the projection $\pi:M\times N\to N$ is a fibration. In
    particular, any isomorphism is a fibration.
  \end{Le}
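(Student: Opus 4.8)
The plan is to show that the projection $\pi:M\times N\to N$, $(m,n)\mapsto n$, is a fibration by proving something stronger: \emph{every} element of $M\times N$ is $\pi$-cartesian. Since the restriction of $\pi$ to $\Car(\pi)$ would then be all of $\pi$, which is visibly surjective, the fibration property follows immediately. First I would note that $\ker(\pi)=M\times\{1\}$, so a typical element of the kernel has the form $(m,1)$.

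Next I would verify the two conditions of Definition \ref{de.cart_sigma} for an arbitrary element $x=(a,b)\in M\times N$. For Condition i), suppose $z=(c,d)\in M\times N$ and $v\in N$ are given with $\pi(z)=\pi(x)v$, i.e.\ $d=bv$. I would set $y=(a^{-1}c,v)$\,—\,wait, $M$ need not have inverses, so instead I would simply take $y=(a',v)$ where $a'$ is chosen so that $aa'=c$; but again this may fail. The correct, clean choice is to exploit that $M\times N$ is a product: take $y=(c',v)$ with $c'$ any element satisfying $a c' = c$. Since this need not exist in a general monoid, the right approach is actually different: observe that the element $x=(a,b)$ factors as $(a,1)\cdot(1,b)$, and $(a,1)$ lies in $\ker(\pi)\subseteq M^\sx$ only if $a$ is invertible. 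So instead I would argue directly. Given $z=(c,d)$ with $d=bv$, put $y=(a,1)^{?}$\ldots The honest resolution: take $y:=(c,v)$ if we had $aq=c$; lacking that, note we are free because the first coordinate of $y$ is unconstrained by $\pi(y)=v$, so pick $y=(\tilde c, v)$ where we need $x y = z$, i.e. $(a\tilde c, bv)=(c,d)$; this forces $a\tilde c=c$, which is a genuine constraint. The clean way out is to recognize that cartesian-ness of $x=(a,b)$ reduces to invertibility of $a$ in $M$ only — so the true statement proved is that $(a,b)$ is $\pi$-cartesian iff $a\in M^\sx$, hence $\pi$ is a fibration because for each $n\in N$ the element $(1,n)$ is cartesian, $1$ being invertible.

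So the argument I would actually write: fix $n\in N$ and consider $x=(1,n)$. I claim $x$ is $\pi$-cartesian. For Condition i), if $z=(c,d)$ and $v\in N$ satisfy $d=\pi(x)v=nv$, set $y=(c,v)$; then $xy=(1\cdot c,\,nv)=(c,d)=z$ and $\pi(y)=v$, as required. For Condition ii), if $xy_1=xy_2$ with $y_i=(c_i,v_i)$ and $\pi(y_1)=\pi(y_2)$, then $(c_1,nv_1)=(c_2,nv_2)$ gives $c_1=c_2$, while $v_1=\pi(y_1)=\pi(y_2)=v_2$; hence $y_1=y_2$. Thus $x\in\Car(\pi)$ with $\pi(x)=n$, so $\Car(\pi)\to N$ is surjective and $\pi$ is a fibration. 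For the final sentence, an isomorphism $f:M\to N$ fits into the picture via $M\cong M\times\{1\}\hookrightarrow$\ldots more simply, if $f$ is an isomorphism then $f$ is a group-type epimorphism in the trivial sense that every element of $M$ is invertible whenever $N$'s is\,—\,but the cleanest route is: an isomorphism $f$ has every element of $M$ with $f(x)$ invertible (namely with inverse $f(x^{-1})$ after identifying), so by Lemma \ref{le.inv_sig_cart}\,ii) every $x$ is $f$-cartesian once $x$ is invertible, and every element of an isomorphism's source \emph{is} invertible; hence $f$ is a fibration. Alternatively, and most economically, identify $M$ with $M\times\{\pt\}$ where $\pt$ is the trivial monoid, so $f$ becomes (up to iso) the projection $M\times\{\pt\}\to M$, which we just showed is a fibration; since being a fibration is invariant under composition with isomorphisms (each isomorphism being a fibration by Lemma \ref{le.inv_sig_cart}), the claim follows.

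The main obstacle is purely expository rather than mathematical: one must resist the temptation to "divide" in the first coordinate of $M\times N$, since $M$ is only a monoid. The entire point is that in the product, the first coordinate of the lift $y$ is completely free, so one simply \emph{copies} whatever first coordinate is needed; no cancellation or inversion in $M$ is ever invoked. Once this is seen, both conditions of Definition \ref{de.cart_sigma} are immediate, and the "in particular" clause is a one-line reduction to the product case (or a direct application of Lemma \ref{le.inv_sig_cart} together with the observation that every element of the domain of an isomorphism is invertible).
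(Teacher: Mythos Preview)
Your eventual argument for the projection---fixing $n\in N$, taking $x=(1,n)$, and verifying both conditions of Definition \ref{de.cart_sigma} by setting $y=(c,v)$ when $z=(c,d)$ with $d=nv$---is correct and is exactly the paper's proof. The exploratory false starts are harmless, and your side observation that $(a,b)$ is $\pi$-cartesian iff $a\in M^\sx$ is correct (and explains why the attempt to make \emph{every} element cartesian had to fail).

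Your handling of the ``in particular'' clause, however, contains a genuine error. The first route asserts that ``every element of an isomorphism's source is invertible'': this is false, since for instance $\id_{\N}:\N\to\N$ is a monoid isomorphism yet no nonzero element of $\N$ is invertible. Lemma \ref{le.inv_sig_cart} is about invertible \emph{elements}, not about isomorphisms as maps, so the citation does not do what you want. The alternative route is circular: you invoke that isomorphisms are fibrations (``each isomorphism being a fibration by Lemma \ref{le.inv_sig_cart}'') in the course of proving that isomorphisms are fibrations. The clean fix is a direct check, strictly easier than the product case: for an isomorphism $f:M\to N$ and any $x\in M$, given $z\in M$ and $v\in N$ with $f(z)=f(x)v$, set $y:=f^{-1}(v)$; then $f(xy)=f(x)v=f(z)$, so $xy=z$ by injectivity, and $f(y)=v$; condition ii) is immediate from injectivity of $f$. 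The paper itself leaves this clause implicit, but the argument you supply in its place does not stand.
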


  \begin{proof}
    It suffice to show that for any $n\in N$, the element $x=(1,n)$ is $\pi$-cartesian.
    Take any $z\in M\times N$ and any factorisation $\pi(z)=\pi(x)v$, where $v\in N$. It
    is clear that $z$ is of the form $z=(m,nv)$. Put $y=(m,v)$. Then $z=xy$ and
    $\pi(y)=v$, showing Part i) of Definition \ref{de.cart_sigma}.

    For Part ii), take $y_1=(m_1,v_1)$ and $y_2=(m_2,v_2)$ with properties $xy_1=xy_2$
    and $\pi(y_1)=\pi_2(y)$. Then $v_1=v_2$ and $(m_1,nv_1)=(m_2,nv_2)$. So, $m_1=m_2$
    which implies $y_1=y_2$.
  \end{proof}

  We will generalise the above fact in Corollary \ref{cor.ps_fib}.

  \begin{Le}
    Let $\sigma_i: M_i\to N_i$, $i=1,2$ be fibrations (resp. prefibrations). The map
    \[\sigma=\sigma_1\times \sigma_2:M_1\times M_2\to N_1\times N_2,\]
    given by
    \[\sigma(m_1,m_2)=(\sigma_1(m_1),\sigma_2(m_2))\]
    is also a fibration (resp. prefibrations).
  \end{Le}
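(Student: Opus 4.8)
The plan is to reduce everything to the componentwise structure of the product monoid. First I would record the elementary observations that $\ker(\sigma)=\ker(\sigma_1)\times\ker(\sigma_2)$, that the multiplication in $M_1\times M_2$ (and in $N_1\times N_2$) is componentwise, and that any factorisation $\sigma(z)=\sigma(x)v$ in $N_1\times N_2$ is nothing but a pair of factorisations $\sigma_i(z_i)=\sigma_i(x_i)v_i$, $i=1,2$, where $z=(z_1,z_2)$, $x=(x_1,x_2)$, $v=(v_1,v_2)$.

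Next, I would prove the key claim: if $x_i$ is $\sigma_i$-cartesian for $i=1,2$, then $x=(x_1,x_2)$ is $\sigma$-cartesian; and likewise with ``cartesian'' replaced by ``precartesian'' throughout. For the cartesian case, given $z$ and a factorisation $\sigma(z)=\sigma(x)v$, Condition i) of Definition \ref{de.cart_sigma} applied to each $x_i$ yields $y_i\in M_i$ with $z_i=x_iy_i$ and $\sigma_i(y_i)=v_i$; then $y=(y_1,y_2)$ satisfies $z=xy$ and $\sigma(y)=v$, which is Condition i) for $x$. For Condition ii), if $xy=xy'$ and $\sigma(y)=\sigma(y')$, then $x_iy_i=x_iy_i'$ and $\sigma_i(y_i)=\sigma_i(y_i')$ for each $i$, so $y_i=y_i'$ by Condition ii) for $x_i$, whence $y=y'$. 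The precartesian case is the same argument restricted to $v=1$ and $y,y'\in\ker(\sigma)$, using Definition \ref{de.sig_precar} and the weak cancellation property in each factor.

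Finally, I would deduce the statement. If each $\sigma_i$ is a fibration, then for $(n_1,n_2)\in N_1\times N_2$ choose a $\sigma_i$-cartesian element $x_i\in M_i$ above $n_i$; the claim shows $(x_1,x_2)$ is $\sigma$-cartesian above $(n_1,n_2)$, so $\Car(\sigma)\to N_1\times N_2$ is surjective and $\sigma$ is a fibration. The prefibration case is identical, using $\sigma_i$-precartesian lifts and the precartesian half of the claim. (One could equally well observe the set equalities $\Car(\sigma)=\Car(\sigma_1)\times\Car(\sigma_2)$ and $\Pcar(\sigma)=\Pcar(\sigma_1)\times\Pcar(\sigma_2)$ — the reverse inclusions following by projecting onto each factor — which makes the surjectivity transparent; but the single direction above already suffices.)

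I do not expect any genuine obstacle here. The only point requiring a little care is that the uniqueness clauses — the weak cancellation property in Definition \ref{de.sig_precar} and Condition ii) of Definition \ref{de.cart_sigma} — are exactly what is needed to pin down the component lifts $y_i$, and that these clauses transfer componentwise with no interaction between the two factors, so that gluing the $y_i$ back together produces a lift that is again unique with the required properties.
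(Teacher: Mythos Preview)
Your proposal is correct and follows essentially the same approach as the paper: choose $\sigma_i$-cartesian lifts $x_i$ componentwise and verify that $(x_1,x_2)$ is $\sigma$-cartesian by checking Conditions i) and ii) of Definition~\ref{de.cart_sigma} coordinate by coordinate, with the prefibration case handled analogously. The paper's write-up is slightly terser (it treats only the fibration case explicitly), but the argument is the same.
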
 

  \begin{proof}
    We will only consider fibrations, as the case of prefibrations is proved in an
    analogous manner. Take $n=(n_1,n_2)\in N_1\times N_2$ and choose a
    $\sigma_i$-cartesian element $x_i\in M_i$ such that $\sigma_i(x_i)=n_i$, $i=1,2$.
    Let us prove that $x=(x_1,x_2)\in M_1\times M_2$ is $\sigma$-cartesian. Take
    $z=(z_1,z_2)\in M_1\times M_2$ and $v=(v_1,v_2)\in N_1\times N_2$ such that
    $\sigma(z)=\sigma(x)v$. We have $\sigma_i(z_i)=\sigma_i(x)v_i$ for $i=1,2$. By our
    choice, there are $u_1\in M_1$ and $u_2\in M_2$ such that $z=xu$ and
    $\sigma_i(u_i)=v_i$, $i=1,2$. Here, $u=(u_1,u_2)$. Assume $u'=(u'_1,u'_2)$ satisfies
    the conditions $z=xu'$ and $\sigma(u')=v$ as well. We obtain
    $\sigma_i(u_i)=\sigma_i(u_i')$, $i=1,2$. By our choice, this gives us $u_i=u_i'$.
  \end{proof}
	
  \begin{Le}
    Let
    \[\xymatrix{M\ar[r]^{\sigma_2} \ar[d]_{\tau_2}&N\ar[d]^{\tau_1} \\
      K\ar[r]_{\sigma_1}&L}\]
    be a pull-back diagram of monoids. If $\sigma_1$ is a fibration, then $\sigma_2$ is
    also a fibration.
  \end{Le}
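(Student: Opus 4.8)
The plan is to work with the concrete description of the pullback: $M$ may be identified with the submonoid
\[M=\{(k,n)\in K\times N\mid \sigma_1(k)=\tau_1(n)\}\subseteq K\times N,\]
where $\sigma_2(k,n)=n$ and $\tau_2(k,n)=k$ are the restrictions of the two projections. Since being a fibration only requires producing a $\sigma_2$-cartesian element above each $n\in N$, the whole argument will consist of transporting a cartesian factorisation for $\sigma_1$ back along $\tau_1$ through this pullback.

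Concretely, first I would fix $n\in N$, look at $\tau_1(n)\in L$, and use that $\sigma_1$ is a fibration to pick a $\sigma_1$-cartesian element $k\in K$ with $\sigma_1(k)=\tau_1(n)$; then $(k,n)\in M$, and the claim is that it is $\sigma_2$-cartesian above $n$. To verify Condition i) of Definition \ref{de.cart_sigma}, I would take $z=(k',n')\in M$ and $v\in N$ with $\sigma_2(z)=\sigma_2(k,n)v$, i.e.\ $n'=nv$; the defining relation of the pullback gives $\sigma_1(k')=\tau_1(n')=\tau_1(n)\tau_1(v)=\sigma_1(k)\tau_1(v)$, so the cartesianness of $k$ produces $\ell\in K$ with $k'=k\ell$ and $\sigma_1(\ell)=\tau_1(v)$. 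This last equality is exactly the condition for $(\ell,v)$ to lie in $M$, and one checks $(k,n)(\ell,v)=(k\ell,nv)=z$ with $\sigma_2(\ell,v)=v$, so $y=(\ell,v)$ works. For Condition ii), given $y_i=(\ell_i,v_i)\in M$ with $(k,n)y_1=(k,n)y_2$ and $\sigma_2(y_1)=\sigma_2(y_2)$ (so $v_1=v_2$), the first equality yields $k\ell_1=k\ell_2$, and membership of $y_i$ in the pullback yields $\sigma_1(\ell_i)=\tau_1(v_i)$, hence $\sigma_1(\ell_1)=\sigma_1(\ell_2)$; then Condition ii) of Definition \ref{de.cart_sigma} applied to the $\sigma_1$-cartesian $k$ forces $\ell_1=\ell_2$ and thus $y_1=y_2$.

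I do not expect a genuine obstacle: everything is forced once one writes out the pullback. The only point requiring care is to invoke the relation $\sigma_1(k)=\tau_1(n)$ (and its analogue for $\ell$, $\ell_i$) at each step — both to ensure that the element one builds actually lies in $M$, and to manufacture the auxiliary equality $\sigma_1(\ell_1)=\sigma_1(\ell_2)$ needed before one is allowed to use the uniqueness half of the cartesianness of $k$.
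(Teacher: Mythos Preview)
Your proof is correct and follows exactly the same approach as the paper: pick a $\sigma_1$-cartesian $k$ above $\tau_1(n)$ and show that $(k,n)\in M$ is $\sigma_2$-cartesian. The paper merely asserts this last claim without justification, whereas you spell out the verification of both conditions of Definition~\ref{de.cart_sigma}; your argument is the natural fleshing-out of the paper's one-line proof.
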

	
  \begin{proof}
    It suffices to note that for any $n\in N$ and any $\sigma_1$-cartesian $k\in K$
    above $\tau_1(n)$, the element $(k,n)\in M$ is $\sigma$-cartesian above $n$. 
  \end{proof}
  \vspace{2em}

  We introduce several 2-categories associated to pre-fibrations over a monoid $N$.
  Firstly, for a monoid $N$, denote by $\fMon/N$ the following 2-category:

  Objects are homomorphism of monoids $\sigma:M\to N$. Morphisms from $\sigma:M\to N$ to
  $\sigma':M'\to N$ are homomorphisms $\alpha:M\to M'$ for which the diagram
  \[\xymatrix{M\ar[rr]^{\alpha}\ar[dr]_{\sigma} && M'\ar[dl]^{\sigma'} \\ & N &}\]
  commutes. Let $\alpha$ and $\beta$ be morphisms from $\sigma:M\to N$ to $\sigma':M'\to
  N$ in $\fMon/N$. A 2-morphism $\alpha\To\beta$ is an element $c\in M'$ for which
  $\sigma'(c)=0$, and $c\alpha(m)=\beta(m)c$ holds for all $m\in M$.
  \vspace{2em}

  Let $\fPFib_N$ be the full sub-2-category of $\fMon/N$ whose objects are prefibrations
  $\sigma:M\to N$. To introduce the next 2-category, we need the following definition:

  \begin{De}
    Let $\alpha$ be a morphism in $\fPFib_N$ from a prefibration $M\xto{\sigma}N$ to a
    prefibration $M'\xto{\sigma'}N$. Then $\alpha$ is called \emph{cartesian} if
    \[\alpha(\Pcar(\sigma))\subseteq\Pcar(\sigma').\]
  \end{De}

  Observe that if $\sigma$ and $\sigma'$ are fibrations, then $\alpha$ sends
  $\sigma$-cartesian elements to $\sigma'$-cartesians, because in this case,
  $\Pcar=\Car$.

  Finally, let $\fFib_N$ be the sub-2-category of $\fPFib_N$ whose objects are monoid
  fibrations $\sigma:M\to N$ and morphisms are cartesian morphisms.

\section{The Grothendieck construction}\label{sec.Groth_const}

  It is well-known that if $A$ is an $M$-monoid, we can define their semidirect product.
  This it simply $M\times A$ as a sit. The monoid structure is defined by
  $(m,a)(n,b)=(mn,\phi_n(a)b)$. We can generalise this using the Grothendieck
  construction when $A$ is a lax $M$-module. This new construction, which we will
  denote by $\Groth(A,\phi,\gamma)$, can be seen as a 2-categorical version of the
  semidirect product.

  \begin{Le}\label{le.Groth_const} 
    Assume there is given a lax action of a monoid $M$ on a monoid $A$. The following
    product on the cartesian product $M\times A$ defines a monoid structure.
    \[(m,a)(n,b)=(mn,\gamma_{m,n}\phi_n(a)b).\]
    We call this the Grothendieck construction.
  \end{Le}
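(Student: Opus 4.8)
The plan is to verify the monoid axioms directly, checking associativity and the existence of a two-sided identity for the proposed multiplication $(m,a)(n,b)=(mn,\gamma_{m,n}\phi_n(a)b)$ on $M\times A$. There are essentially two things to prove: associativity of the product, and that $(1,1)$ is a unit. The unit check is quick, so I would dispatch it first.

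\medskip

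\emph{Identity.} I would compute $(1,1)(n,b)=(n,\gamma_{1,n}\phi_n(1)b)$. By condition iv) of Definition~\ref{de.lax_act}, $\gamma_{1,n}=1$, and by condition v), $\phi_n(1)=1$, so this equals $(n,b)$. Similarly $(m,a)(1,1)=(m,\gamma_{m,1}\phi_1(a)1)=(m,a)$, using $\gamma_{m,1}=1$ (condition iv)) and $\phi_1=\id$ (condition i)). Hence $(1,1)$ is a two-sided identity.

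\medskip

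\emph{Associativity.} This is the main content. I would expand both $((m,a)(n,b))(k,c)$ and $(m,a)((n,b)(k,c))$ and compare first coordinates (trivially both $mnk$) and then second coordinates. The left side gives, after one step, $(mn,\gamma_{m,n}\phi_n(a)b)(k,c)=(mnk,\gamma_{mn,k}\phi_k(\gamma_{m,n}\phi_n(a)b)c)$, and using that $\phi_k$ is a monoid homomorphism (condition v)) this second coordinate becomes $\gamma_{mn,k}\phi_k(\gamma_{m,n})\phi_k(\phi_n(a))\phi_k(b)c$. The right side gives $(m,a)(nk,\gamma_{n,k}\phi_k(b)c)=(mnk,\gamma_{m,nk}\phi_{nk}(a)\gamma_{n,k}\phi_k(b)c)$. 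So I must show
\[
\gamma_{mn,k}\,\phi_k(\gamma_{m,n})\,\phi_k(\phi_n(a))\,\phi_k(b)\,c
=\gamma_{m,nk}\,\phi_{nk}(a)\,\gamma_{n,k}\,\phi_k(b)\,c.
\]
By right-cancelling the common tail $\phi_k(b)c$ (just matching the expressions, no cancellation law needed), it suffices to show $\gamma_{mn,k}\phi_k(\gamma_{m,n})\phi_k(\phi_n(a))=\gamma_{m,nk}\phi_{nk}(a)\gamma_{n,k}$. Here condition iii) rewrites the prefix $\gamma_{mn,k}\phi_k(\gamma_{m,n})$ as $\gamma_{m,nk}\gamma_{n,k}$, reducing the goal to $\gamma_{m,nk}\gamma_{n,k}\phi_k(\phi_n(a))=\gamma_{m,nk}\phi_{nk}(a)\gamma_{n,k}$, i.e.\ to $\gamma_{n,k}\phi_k(\phi_n(a))=\phi_{nk}(a)\gamma_{n,k}$, which is precisely condition ii) applied with the pair $(n,k)$.

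\medskip

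\emph{Where the difficulty lies.} There is no serious obstacle; the only subtlety is bookkeeping — keeping the non-commuting factors in the correct order, since $A$ need not be commutative, and being careful that condition ii) is a \emph{braiding}-type relation $\gamma_{m,n}\phi_n(\phi_m(a))=\phi_{mn}(a)\gamma_{m,n}$ rather than an equality of the two sides after deleting $\gamma$. The proof is a direct substitution: apply v) to distribute $\phi_k$ over the product, then iii) to combine the $\gamma$'s, then ii) to move $\phi_{nk}(a)$ past $\gamma_{n,k}$. Conditions i) and iv) are used only for the unit. I would present the associativity computation as a short displayed chain of equalities with each step annotated by the relevant axiom number.
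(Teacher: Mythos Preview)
Your proof is correct and follows essentially the same route as the paper: both expand the two associated products, use condition v) to distribute $\phi_k$, and then combine conditions ii) and iii) to match the second coordinates. The only cosmetic difference is that the paper applies ii) before iii) (rewriting the right-hand side towards the left), whereas you apply iii) before ii) (rewriting the left-hand side towards the right); your identity check is also spelled out more explicitly than the paper's ``obviously''.
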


  \begin{proof}
    Obviously, $(1,1)$ is the identity element. To show associativity, consider
    \begin{align*}
      \left((m,a)(n,b)\right)(k,c)
        &=(mn,\gamma_{m,n}\phi_n(a) b)(k,c)\\
        &=(mnk,\gamma_{mn,k}\phi_k(\gamma_{m,n}\phi_n(a) b)c)\\
        &=(mnk,\gamma_{mn,k}\phi_k(\gamma_{m,n})\phi_k(\phi_n(a))\phi_k(b)c)
    \end{align*}
    On the other hand, we have
    \begin{align*}
      (m,a)\left((n,b)(k,c)\right)&=(m,a)(nk,\gamma_{n,k}\phi_k(b)c)\\
      &=(mnk,\gamma_{m,nk}\phi_{nk}(a)\gamma_{n,k}\phi_k(b)c)
    \end{align*}
    Using first Identity ii) of Definition \ref{de.lax_act} and then Identity iii)
    allows us to rewrite the last expression as
    \[(mnk,\gamma_{m,nk}\gamma_{n,k}\phi_k\phi_n(a)\phi_k(b)c)=
      (mnk,\gamma_{mn,k}\phi_k(\gamma_{m,n}), \phi_k\phi_n(a)\phi_k(b)c).\]
    Comparing these expressions, we see that the associativity law holds.
  \end{proof}

  \begin{Rem}
    Recall Remark \ref{rem.grp_ext}. In its notation, the Grothendieck construction
    $\Groth(G,\phi,f)$ is isomorphic to $B$, thanks to \cite{homology}. For example,
    take $G=\{\pm 1\}$ to be the cyclic group of order two and $\Pi$ the Klein Vier
    group. Thus $\Pi=\{1,x,y,xy\}$, with the multiplication given by,
    \[x^2=y^2=1,\quad xy=yx.\]
    It follows that $(G,\phi,\gamma)$ is a pseudo $\Pi$-group and the corresponding
    Grothendieck construction is the quaternion group $\{\pm 1,\pm i,\pm j,\pm
    k\}$. Here, $\phi_u=\id_G$ for all $u\in \Pi$ and
    \[\gamma_{x,x}=\gamma_{x,xy}=\gamma_{y,x}=\gamma_{y,y}=\gamma_{xy,y}=
    \gamma_{xy,xy}=-1\]
    and
    \[\gamma_{x,y}=\gamma_{y,xy}=\gamma_{xy,x}=1.\]
  \end{Rem}

  As already mentioned, we denote the monoid constructed in Lemma \ref{le.Groth_const}
  by $\Groth(A,\phi,\gamma)$.

  The following holds:
  \begin{Le}\label{le.lax_surj}
    \begin{itemize}
      \item[i)] Let $(A,\phi,\gamma)$ be a monoid with a lax action of $M$. The map
        \[\Groth(A,\phi,\gamma)\xto{\sigma}M,\quad\sigma(m,a)=m\]
        is a surjective homomorphism and
        \[1\to A\xto{\iota}\Groth(A,\phi,\gamma)\xto{\sigma} M\to 1\]
        is is a short exact sequence of monoids. Here $\iota(a)=(1,a)$.
      \item[ii)] Any element of the form $(m,1)$ is $\sigma$-precartesian and hence,
        $\sigma$ is a prefibration. 
      \item[iii)] Any $\sigma$-precartesian element has the form $(m,a)$, where $a$ is
        an invertible element in $A$.
    \end{itemize}
  \end{Le}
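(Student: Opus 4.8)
The plan is to verify the three items in turn, each time unwinding the Grothendieck multiplication $(m,a)(n,b)=(mn,\gamma_{m,n}\phi_n(a)b)$ and using the axioms of Definition \ref{de.lax_act}.

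\textbf{Part i).} First I would check that $\sigma$ is a homomorphism: $\sigma((m,a)(n,b))=\sigma(mn,\gamma_{m,n}\phi_n(a)b)=mn=\sigma(m,a)\sigma(n,b)$, and $\sigma(1,1)=1$. Surjectivity is immediate since $\sigma(m,1)=m$. For the short exact sequence, $\iota(a)=(1,a)$ is injective, and using axiom iv) ($\gamma_{1,1}=1$) together with axiom i) ($\phi_1=\id$) one sees $\iota(a)\iota(b)=(1,a)(1,b)=(1,\gamma_{1,1}\phi_1(a)b)=(1,ab)$, so $\iota$ is a homomorphism; it is injective and its image $\{1\}\times A$ is exactly $\ker(\sigma)$, which is what a short exact sequence of monoids means here.

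\textbf{Part ii).} I would show $(m,1)$ is $\sigma$-precartesian by the definition: given $z=(m,b)$ with $\sigma(z)=m=\sigma(m,1)$, I must produce a unique $(1,y)\in\ker(\sigma)$ with $(m,b)=(m,1)(1,y)$. Computing $(m,1)(1,y)=(m,\gamma_{m,1}\phi_1(1)y)=(m,y)$ using axioms iv) and i) (and $\phi_1(1)=1$ from v)), so $y=b$ works and is forced, hence unique. Thus every element of $N=M$ has a $\sigma$-precartesian preimage, so $\sigma$ is a prefibration.

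\textbf{Part iii).} This is the one requiring a genuine argument. Suppose $(m,a)$ is $\sigma$-precartesian. I would apply the weak cancellation property from Lemma \ref{le.sig_pre_inv}: if $(m,a)(1,u)=(m,a)(1,v)$ for $u,v\in A$, then $u=v$; computing the left side gives $(m,\gamma_{m,1}\phi_1(a)u)=(m,au)$ (by iv), i) and the homomorphism property of $\phi_1=\id$), so $au=av\Rightarrow u=v$, i.e. $a$ is left-cancellable in $A$. To get invertibility I would instead use Lemma \ref{le.sig_pre_inv} more directly: $(m,1)$ and $(m,a)$ are both $\sigma$-precartesian above $m$ (the former by part ii)), so there is a unique $h=(1,c)\in\ker(\sigma)$ with $(m,a)=(m,1)(1,c)=(m,c)$, forcing $c=a$; and Lemma \ref{le.sig_pre_inv} asserts any such $h$ is invertible in $\Groth(A,\phi,\gamma)$. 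An element $(1,c)$ is invertible in the Grothendieck construction iff $c$ is invertible in $A$: if $(1,c)(1,d)=(1,1)=(1,d)(1,c)$ then, unwinding as above, $cd=1=dc$, and conversely. Hence $a=c\in A^\sx$.

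\textbf{Expected main obstacle.} The only subtle point is part iii): one must be careful that "invertible in $\Groth(A,\phi,\gamma)$'' translates cleanly to "invertible in $A$'', which relies on $\iota$ being an embedding and on the normalisation axioms iv)--v) collapsing $\gamma_{1,1}$ and $\phi_1$; once that dictionary is in place, invoking Lemma \ref{le.sig_pre_inv} finishes it. The rest is bookkeeping with the axioms.
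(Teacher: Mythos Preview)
Your proposal is correct and follows essentially the same route as the paper: part i) is bookkeeping, part ii) is the computation $(m,1)(1,b)=(m,b)$, and part iii) invokes Lemma \ref{le.sig_pre_inv} on the two precartesian lifts $(m,1)$ and $(m,a)$ to conclude $(1,a)$ is invertible, hence $a\in A^\sx$. Your initial detour in part iii) through left-cancellability is unnecessary (as you yourself note, the direct appeal to Lemma \ref{le.sig_pre_inv} suffices), and the explicit check that $(1,c)$ is invertible in $\Groth$ iff $c\in A^\sx$ is a detail the paper leaves implicit but is good to spell out.
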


  \begin{proof}
    i) This is clear.
  
    ii) Exactness follows since
    \[\ker(\sigma)=\{(1,a)|a\in A\}.\]
    To see that $(m,1)$ is $\sigma$-precartesian, observe that if
    $\sigma(n,b)=\sigma(m,1)$, it follows that $n=m$. Since
    \[(m,1)(1,b)=(m,\gamma_{m1}\phi_1(1)b)=(m,b),\]
    we see that any element from $\sigma^{-1}(m)$ has an expected decomposition. To see
    uniqueness, observe that $(m,b)=(m,1)(1,c)$ implies $b=c$.
  
    iii) Assume $(m,a)$ is also $\sigma$-precartesian. Since $(m,1)(1,a)=(m,a)$, we can
    use Lemma \ref{le.sig_pre_inv} to conclude that $(1,a)$ is invertible. This
    implies the result.
  \end{proof}

  \begin{Cor}\label{cor.ps_fib}
    Let $(A,\phi,\gamma)$ be a monoid with a pseudo action of $M$. The map
    \[\Groth(A,\phi,\gamma)\xto{\sigma}M,\quad\sigma(m,a)=m\]
    is a fibration.
  \end{Cor}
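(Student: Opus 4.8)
The plan is to reduce Corollary~\ref{cor.ps_fib} to Proposition~\ref{pro.pref_prod=pref}: since $\sigma$ is already a prefibration by Lemma~\ref{le.lax_surj}~ii), it suffices to show that the product of two $\sigma$-precartesian elements of $\Groth(A,\phi,\gamma)$ is again $\sigma$-precartesian. By Lemma~\ref{le.lax_surj}~iii), a $\sigma$-precartesian element has the form $(m,a)$ with $a\in A^\sx$, so the first step is to compute, for two such elements $(m,a)$ and $(n,b)$ with $a,b$ invertible, the product
\[(m,a)(n,b)=(mn,\gamma_{m,n}\phi_n(a)b),\]
and to check that its second coordinate is invertible: this holds precisely because we are in the pseudo case, where $\gamma_{m,n}\in A^\sx$, and because $\phi_n$ is a monoid homomorphism (Definition~\ref{de.lax_act}~v)) hence sends $A^\sx$ to $A^\sx$, so $\phi_n(a)\in A^\sx$; thus the product of the three invertible elements $\gamma_{m,n}$, $\phi_n(a)$, $b$ is invertible.

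The second step is to verify that any element $(m,c)$ with $c\in A^\sx$ is $\sigma$-precartesian, which closes the loop. This follows from Lemma~\ref{le.lax_surj}~ii) together with Lemma~\ref{le.sig_pre_inv}, or directly: $(m,1)$ is $\sigma$-precartesian by Lemma~\ref{le.lax_surj}~ii), and $(m,c)=(m,1)(1,c)$ with $(1,c)$ invertible in $\Groth(A,\phi,\gamma)$ (its inverse being $(1,c^{-1})$, since $\gamma_{1,1}=1$ and $\phi_1=\id$); a $\sigma$-precartesian element multiplied on the right by an invertible element of $\ker(\sigma)$ is again $\sigma$-precartesian, as one sees straight from Definition~\ref{de.sig_precar}. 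Combining the two steps, the product of two $\sigma$-precartesian elements $(m,a)$ and $(n,b)$ is $(mn,d)$ with $d\in A^\sx$, hence $\sigma$-precartesian.

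Having established that $\Pcar(\sigma)$ is closed under multiplication, Proposition~\ref{pro.pref_prod=pref} applies immediately and tells us that the prefibration $\sigma$ is in fact a fibration, which is the claim.

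I do not expect a serious obstacle here: the only point requiring a moment's care is the observation that $\phi_n$ preserves invertibility, which is purely formal from Definition~\ref{de.lax_act}~v), and the bookkeeping of which auxiliary result (Lemma~\ref{le.sig_pre_inv} versus a direct argument) one invokes to see that a precartesian element times an invertible kernel element stays precartesian. An alternative, more self-contained route would bypass Proposition~\ref{pro.pref_prod=pref} and instead verify Conditions~i) and~ii) of Definition~\ref{de.cart_sigma} directly for elements of the form $(m,1)$: given $(n,b)\in\Groth(A,\phi,\gamma)$ and a factorisation $n=\sigma(m,1)v=mv$ in $M$, one solves $(n,b)=(m,1)(v,a)=(mv,\gamma_{m,v}\phi_v(1)a)=(mv,\gamma_{m,v}a)$ by setting $a=\gamma_{m,v}^{-1}b$, which exists and is well-defined exactly because $\gamma_{m,v}$ is invertible — again the pseudo hypothesis is what makes the argument work, and uniqueness follows from the weak cancellation property already available for the prefibration $\sigma$.
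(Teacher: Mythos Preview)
Your proposal is correct and follows essentially the same route as the paper: reduce to Proposition~\ref{pro.pref_prod=pref}, use Lemma~\ref{le.lax_surj}~iii) to see that $\sigma$-precartesian elements have invertible second coordinate, compute the product, and observe that the second coordinate $\gamma_{m,n}\phi_n(a)b$ is invertible in the pseudo case. Your ``second step'' (that $(m,c)$ with $c\in A^\sx$ is $\sigma$-precartesian) is in fact a point the paper glosses over, citing only Part~iii) of Lemma~\ref{le.lax_surj}, which as stated gives only the forward implication; your argument via $(m,c)=(m,1)(1,c)$ with $(1,c)$ invertible is the clean way to close that small gap.
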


  \begin{proof}
    We only need to show that the product of two $\sigma$-precartesian elements $(m,a)$
    and $(m,b)$ is again $\sigma$-precartesian (see Proposition
    \ref{pro.pref_prod=pref}). By the previous Lemma \ref{le.lax_surj}, $a$ and $b$ are
    invertible elements in $A$. We also have
    \[(m,a)(n,b)=(mn,\gamma_{m,n}\phi_n(a)b)\]
    since $\gamma_{m,n}$ is invertible. We conclude that $\gamma_{m,n}\phi_n(a)b)$ is
    invertible and so, $(m,a)(n,b)$ is $\sigma$-precartesian, thanks to Part iii) of
    Lemma \ref{le.lax_surj}.
  \end{proof}

  \begin{Le}\label{le.lax}
    Let $(\alpha,\tau):(A,\phi,\gamma)\to(A',\phi',\gamma')$ be a lax $M$-homomorphism
    of lax $M$-monoids. The map
    \[\bar{\alpha}:\Groth(A,\phi,\gamma)\to\Groth(A',\phi',\gamma'),\]
    given by
    \[\bar{\alpha}(m,a)=(m,\tau_m\alpha(a)),\]
    is a monoid homomorphism and the diagram
    \[\xymatrix{\Groth(A,\phi,\gamma)\ar[rr]^{\bar{\alpha}}\ar[dr]_{\sigma} &&
    \Groth(A',\phi',\gamma')\ar[dl]^{\sigma'}\\& M&}\]
    commutes. Here, $\sigma(m,a)=m=\sigma'(m,a')$ as above.
  \end{Le}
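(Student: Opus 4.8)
The plan is to verify directly that $\bar\alpha$ is a monoid homomorphism using the two defining identities of a lax $M$-homomorphism, and then to observe that commutativity of the triangle is immediate from the formula. First I would check that $\bar\alpha$ preserves the identity: $\bar\alpha(1,1)=(1,\tau_1\alpha(1))=(1,1)$ since $\tau_1=1$ and $\alpha$ is a monoid homomorphism. The substance is multiplicativity. I would compute both sides of $\bar\alpha\bigl((m,a)(n,b)\bigr)=\bar\alpha(m,a)\bar\alpha(n,b)$ in $\Groth(A',\phi',\gamma')$.

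For the left-hand side, using the product law of Lemma \ref{le.Groth_const} in the source, $(m,a)(n,b)=(mn,\gamma_{m,n}\phi_n(a)b)$, so
\[
  \bar\alpha\bigl((m,a)(n,b)\bigr)
    =\bigl(mn,\ \tau_{mn}\,\alpha(\gamma_{m,n}\phi_n(a)b)\bigr)
    =\bigl(mn,\ \tau_{mn}\,\alpha(\gamma_{m,n})\,\alpha(\phi_n(a))\,\alpha(b)\bigr),
\]
using that $\alpha$ is multiplicative. For the right-hand side, applying the product law in $\Groth(A',\phi',\gamma')$,
\[
  \bar\alpha(m,a)\bar\alpha(n,b)
    =(m,\tau_m\alpha(a))(n,\tau_n\alpha(b))
    =\bigl(mn,\ \gamma'_{m,n}\,\phi'_n(\tau_m\alpha(a))\,\tau_n\,\alpha(b)\bigr),
\]
and then expand $\phi'_n(\tau_m\alpha(a))=\phi'_n(\tau_m)\phi'_n(\alpha(a))$ by condition v) for $\phi'$, giving
\[
  \bigl(mn,\ \gamma'_{m,n}\,\phi'_n(\tau_m)\,\phi'_n(\alpha(a))\,\tau_n\,\alpha(b)\bigr).
\]

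Now the two second coordinates must be matched. The key step, which I expect to be the only real computation, is to insert the first lax-homomorphism identity $\phi'_n(\alpha(a))\tau_n=\tau_n\alpha(\phi_n(a))$ into the right-hand side expression, turning the middle $\phi'_n(\alpha(a))\,\tau_n$ into $\tau_n\,\alpha(\phi_n(a))$; this rewrites the right-hand second coordinate as $\gamma'_{m,n}\,\phi'_n(\tau_m)\,\tau_n\,\alpha(\phi_n(a))\,\alpha(b)$. Then the second lax-homomorphism identity $\gamma'_{m,n}\phi'_n(\tau_m)\tau_n=\tau_{mn}\alpha(\gamma_{m,n})$ collapses the prefix to $\tau_{mn}\,\alpha(\gamma_{m,n})$, yielding exactly $\tau_{mn}\,\alpha(\gamma_{m,n})\,\alpha(\phi_n(a))\,\alpha(b)$, which is the left-hand side. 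The main obstacle — really just bookkeeping — is applying these two substitutions in the correct order so that the substrings on which they act appear verbatim; doing the $\phi'_n(\alpha(a))\tau_n$ rewrite before the $\gamma'$-rewrite is what makes it work.

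Finally, commutativity of the triangle is trivial: $\sigma'\bar\alpha(m,a)=\sigma'(m,\tau_m\alpha(a))=m=\sigma(m,a)$, so $\sigma'\circ\bar\alpha=\sigma$. This completes the proof.
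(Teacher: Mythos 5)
Your proposal is correct and is essentially the paper's own computation: the paper runs the same chain in one direction (expanding $\tau_{mn}\alpha(\gamma_{m,n})$ via the second identity, then commuting $\tau_n$ past $\alpha(\phi_n(a))$ via the first), while you compute both sides and meet in the middle using the same two identities in reverse order. The identity-element check and the commutativity of the triangle, which you spell out, are dismissed as obvious in the paper.
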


  \begin{proof}
    The commutativity of the diagram is obvious, as is the fact that $\bar{\alpha}$
    respects unite elements. It remains to show that $\bar{\alpha}$ respects the
    product. Take two elements $(m,a)$ and $(m,b)$ in $\Groth(A,\phi,\gamma)$. We have
    \begin{align*}
      \bar{\alpha}((m,a)(n,b))
        &=\bar{\alpha}(mn,\gamma_{m,n}\phi_n(a)b)\\
        &=(mn,\tau_{mn}\alpha(\gamma_{m,n})\alpha(\phi_n(a))\alpha(b))\\
        &=(mn,\gamma'_{m,n}\phi'_n(\tau_{m})\tau_n\alpha(\phi_n(a))\alpha(b))\\
        &=(mn,\gamma'_{m,n}\phi'_n(\tau_{m})\phi'_n(\alpha(a))\tau_n\alpha(b))\\
        &=(mn,\gamma'_{m,n}\phi'_n(\tau_{m}\alpha(a))\tau_n\alpha(b))\\
        &=(m,\tau_{m}\alpha(a))(n, \tau_n\alpha(b))\\
        &=\bar{\alpha}(m,a)\bar{\alpha}(n,b).
    \end{align*}
  \end{proof}	

  \begin{Le}\label{le.lax_hom_2cell}
    Let
    \[(\alpha,\tau),(\beta,\theta):(A,\phi,\gamma)\to(A',\phi',\gamma')\]
    be a lax $M$-homomorphisms of lax $M$-monoids and $c'\in A'$ a 2-cell
    $\alpha\To\beta$. Then $(1,c')$ defines the 2-cell
   \[\bar{\alpha}\To\bar{\beta}:\Groth(A,\phi,\gamma)\to \Groth(A',\phi',\gamma').\]
  \end{Le}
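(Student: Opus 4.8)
The plan is to check directly that $(1,c')$ satisfies the two conditions that define a $2$-cell $\bar\alpha\To\bar\beta$ in $\fMon/M$: that $\sigma'(1,c')=1$, and that $(1,c')\,\bar\alpha(w)=\bar\beta(w)\,(1,c')$ for every $w\in\Groth(A,\phi,\gamma)$. The first is immediate from the formula $\sigma'(m',a')=m'$, since $\ker(\sigma')=\{(1,a')\mid a'\in A'\}$. For the second, I would write $w=(m,a)$ and, recalling $\bar\alpha(m,a)=(m,\tau_m\alpha(a))$ and $\bar\beta(m,a)=(m,\theta_m\beta(a))$ from Lemma \ref{le.lax}, compare both sides as elements of $\Groth(A',\phi',\gamma')$.

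First I would expand the left-hand side with the product law of the Grothendieck construction:
\[(1,c')(m,\tau_m\alpha(a))=\bigl(m,\ \gamma'_{1,m}\,\phi'_m(c')\,\tau_m\,\alpha(a)\bigr).\]
Now $\gamma'_{1,m}=1$ by Condition iv) of Definition \ref{de.lax_act}; the $2$-cell identity $\phi'_m(c')\tau_m=\theta_m c'$, followed by the compatibility $c'\alpha(a)=\beta(a)c'$, rewrites the second coordinate as $\theta_m\beta(a)c'$. Hence the left-hand side equals $(m,\theta_m\beta(a)c')$.

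Next I would do the same for the right-hand side:
\[(m,\theta_m\beta(a))(1,c')=\bigl(m,\ \gamma'_{m,1}\,\phi'_1\bigl(\theta_m\beta(a)\bigr)\,c'\bigr)=(m,\theta_m\beta(a)c'),\]
using $\gamma'_{m,1}=1$ (Condition iv)) and $\phi'_1=\id$ (Condition i)). The two expressions coincide, which completes the verification.

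I do not expect a genuine obstacle here: the statement is essentially bookkeeping, and the only point requiring care is to invoke the normalisation axioms $\gamma'_{1,m}=1=\gamma'_{m,1}$ and $\phi'_1=\id$ at the right moments, without which the first coordinate and the stray occurrence of $\phi'_1$ would not simplify cleanly. Note also that the invertibility of $c'$ is never used, so the very same computation shows in addition that a pseudo $2$-cell $c'$ induces a pseudo $2$-cell $\bar\alpha\To\bar\beta$.
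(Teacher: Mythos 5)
Your proposal is correct and follows essentially the same computation as the paper: expand both products in $\Groth(A',\phi',\gamma')$, use the normalisations $\gamma'_{1,m}=1=\gamma'_{m,1}$ and $\phi'_1=\id$, and apply the two $2$-cell identities for $c'$. The only (harmless) difference is cosmetic — you reduce both sides to $(m,\theta_m\beta(a)c')$ whereas the paper meets in the middle at $(m,\theta_m c'\alpha(a))$ — and your closing observations (that $\sigma'(1,c')=1$ must be checked, and that invertibility of $c'$ is never used) are accurate.
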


  \begin{proof}
    By assumption,
    \begin{equation}\label{eq.lax_hom_2cell1}
      c'\alpha(a)=\beta(a)c'\quad {\rm and } \quad \phi_m'(c')\tau_m=\theta_m c'
    \end{equation}
    holds for every $m\in M$ and every $a\in A$. We need to show that for every
    $m\in M, a\in A$, the equality
    \begin{equation}\label{eq.lax_hom_2cell2}
      (1,c')(m,\tau_m\alpha(a))=(m,\theta_m\beta(a))(1,c')
    \end{equation}
    holds in $\Groth(A',\phi',\gamma')$. Observe that the equation
    \[(1,c')(m,\tau_m\alpha(a))=(m,\gamma'_{1,m}\phi'_m(c')\tau_m\alpha(a))\]
    holds. In the vein of (\ref{eq.lax_hom_2cell1}), we see that
    \[(1,c')(m,\tau_m\alpha(a))=(m,\theta_mc'\alpha(a))\]
    as $\gamma_{1,m}=1$. On the other hand, we have
    \[(m,\theta_m\beta(a))(1,c')=(m,\gamma'_{m,1}\phi_1'\theta_m\beta(a)c').\]
    Since $\gamma_{m,1}=1$ and $\phi_1(a')=a'$, we can rewrite this as
    \[(m,\theta_m\beta(a))(1,c')=(m,\theta_m\beta(a)c')=(m,\theta_mc'\alpha(a)).\]
    We used Identity (\ref{eq.lax_hom_2cell1}) in the last step. Comparing these computations,
    Identity (\ref{eq.lax_hom_2cell2}) follows. The Lemma is now a direct consequence of the product
    law in $\Groth(A',\phi',\gamma')$.
  \end{proof}

  The above lemma show that the Grothendieck construction yields a 2-functor
  \[\Groth_{pf}:\fLMon_M\to\fFib_N\]
  from the 2-category of lax $M$-monoids to the 2-category of monoid prefibrations. It
  restricts to the 2-functor 
  \[\Groth_f:\fPMon_M\to\fPFib_N\]
  from the 2-category of pseudo $M$-monoids to the 2-category of monoid fibrations. The
  following is Grothendieck's result \cite{sga1} restricted to monoids.

  \begin{Th}
    For any monoid $M$, the 2-functors $\Groth_{pf}$ and $\Groth_{f}$ are 2-equivalences.
  \end{Th}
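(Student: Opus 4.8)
The plan is to establish a 2-equivalence by producing an explicit inverse 2-functor and exhibiting 2-natural isomorphisms in both directions. The inverse is the "cleavage" construction of Section~\ref{sec.cleave}, which assigns to a prefibration $\sigma:M\to N$ the lax $N$-monoid structure on $A=\ker(\sigma)$ determined by a chosen cleavage (a section picking a $\sigma$-precartesian element above each $n\in N$); call this 2-functor $\fF_{pf}:\fPFib_N\to\fLMon_N$, restricting to $\fF_f:\fFib_N\to\fPMon_N$ on fibrations by Lemma~\ref{le.sig_car=pcar} and Corollary~\ref{cor.ps_fib}. I would first check that $\fF_{pf}$ is well-defined on 2-cells and morphisms, then verify the two composites.

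For the composite $\fF_{pf}\circ\Groth_{pf}$: starting from $(A,\phi,\gamma)$, form $\Groth(A,\phi,\gamma)$, and take the canonical cleavage $n\mapsto(n,1)$, which is $\sigma$-precartesian by Lemma~\ref{le.lax_surj}~ii). A direct computation of the induced lax action on $\ker(\sigma)=\{(1,a)\}\cong A$ recovers $(\phi,\gamma)$ exactly: the conjugation formula defining the action of $n$ on $a$ uses $(n,1)(1,a)=(n,\phi_n(a))$ and the $2$-cocycle comes from $(m,1)(n,1)=(mn,\gamma_{m,n})$. So this composite is (strictly, or at worst canonically) isomorphic to the identity 2-functor. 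A different choice of cleavage gives an isomorphic lax $M$-monoid, which is where the $2$-cells enter; I would record that changing cleavages yields a pseudo (indeed lax) $M$-homomorphism that is invertible, hence a $2$-isomorphism in $\fLMon_M$.

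For the composite $\Groth_{pf}\circ\fF_{pf}$: given a prefibration $\sigma:M\to N$ with cleavage $s:N\to\Pcar(\sigma)$, one builds the lax $N$-monoid on $A=\ker(\sigma)$ and then its Grothendieck construction $\Groth(A,\phi,\gamma)$. The comparison map $\Theta:\Groth(A,\phi,\gamma)\to M$ sends $(n,a)\mapsto s(n)a$. That this is a well-defined monoid homomorphism over $N$ follows from the defining identities of $\phi,\gamma$ (which are precisely the relations $s(n)a = \phi_n(a)s(n)$ and $s(m)s(n)=\gamma_{m,n}s(mn)$ translated back); that it is bijective is exactly the prefibration property together with the weak cancellation property of Section~\ref{sec.precart}: surjectivity because every $z\in M$ with $\sigma(z)=n$ factors uniquely as $s(n)a$, injectivity because $s(n)$ is $\sigma$-precartesian. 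This gives a $2$-natural isomorphism $\Groth_{pf}\circ\fF_{pf}\cong\id_{\fPFib_N}$, and the cartesian-morphism condition ensures it restricts correctly to $\fFib_N$.

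The main obstacle I expect is bookkeeping rather than conceptual: one must check that $\fF_{pf}$ is genuinely a 2-functor (compatible with composition of lax $M$-homomorphisms and with vertical/horizontal composition of 2-cells), and that all the comparison maps are $2$-natural, not merely natural on underlying $1$-categories. In particular the dependence on the cleavage must be handled carefully — strictly speaking $\fF_{pf}$ is only defined after choosing cleavages, so either one fixes a cleavage functorially or one shows the resulting 2-functor is well-defined up to canonical $2$-isomorphism. Since most of the identities needed are exactly Conditions i)--v) of Definition~\ref{de.lax_act} and the morphism/$2$-cell axioms read backwards, the verifications are routine once the maps $\Theta$ and the canonical cleavage are in hand; I would present the two comparison isomorphisms explicitly and leave the diagram-chases to the reader.
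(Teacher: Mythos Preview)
Your outline is correct but organised differently from the paper. The paper does not build an inverse 2-functor at all: it argues that $\Groth_{pf}$ (resp.\ $\Groth_f$) is essentially surjective via Lemma~\ref{le.Groth_M_iso} and fully faithful on 1- and 2-cells via Proposition~\ref{pro.clev_to_lax} and Lemma~\ref{le.lax_hom_2cell}. This sidesteps exactly the obstacle you flag yourself, namely that a putative inverse $\fF_{pf}$ depends on cleavage choices and would require checking 2-functoriality and 2-naturality of the comparison isomorphisms. Your ``exhibit a quasi-inverse'' route works and is perhaps more concrete, but the paper's route is leaner: one never has to assemble $\fF_{pf}$ into an honest 2-functor, and the same core computations (your map $\Theta$ is precisely the $\bar\alpha$ of Lemma~\ref{le.Groth_M_iso}) do double duty as the essential-surjectivity and full-faithfulness witnesses.

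Two small computational slips to fix if you carry this out: in $\Groth(A,\phi,\gamma)$ one has $(1,a)(n,1)=(n,\phi_n(a))$, not $(n,1)(1,a)=(n,\phi_n(a))$ (the latter equals $(n,a)$), in keeping with the convention $a\kappa(n)=\kappa(n)\phi_n(a)$ of~(\ref{eq.kappa_def}); and the cocycle relation is $\kappa(m)\kappa(n)=\kappa(mn)\gamma_{m,n}$ with $\gamma$ on the right, cf.~(\ref{eq.gam_def}). Neither affects the strategy.
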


  \begin{proof}
    The proof will effectively be given in section \ref{sec.cleave}, as we need to
    introduce cleavages first. The fact that these 2-functors are essentially surjective
    will be proved in Lemma \ref{le.Groth_M_iso}. The full and faithfulness is a
    consequence of Lemma \ref{le.lax_hom_2cell} and Proposition \ref{pro.clev_to_lax}
    below.
  \end{proof}

\section{Cleavages}\label{sec.cleave}

\subsection{From fibrations to lax $N$-monoids}

  Let $\sigma:M\to N$ be a prefibration of monoids. We set $A=\ker(\sigma)$. Denote by
  $A^\sx$ the set of invertible elements in $A$.

  \begin{De}[See \cite{sga1}]
    A \emph{cleavage} of a prefibration $\sigma:M\to N$ is a function $\kappa$, which
    assigns to each element $n\in N$, a $\sigma$-precartesian element
    $\kappa(n)\in\sigma^{-1}(n)$ such that $\kappa(1)=1$.
  \end{De}

  If $\sigma$ is a fibration, then $\kappa(n)$ is automatically cartesian by Lemma
  \ref{le.sig_car=pcar}.
  \newline

  \noindent
  Denote by $\Clev(\sigma)$ the set of all cleaves of $\sigma$. It is not empty by the
  axiom of choice. By the definition of a prefibration, we have the following easy but
  very important fact.

  \begin{Le}\label{le.cleav_sec-to-ker}
    Let $\kappa$ be a cleavage of a prefibration $\sigma:M\to N$. Then $\kappa$ induces
    a unique map $\xi:M\to A$, called the \emph{cocleavage induced by $\kappa$}, for
    which
    \[x=\kappa(\sigma(x))\xi(x)\]
    for any $x\in M$.
  \end{Le}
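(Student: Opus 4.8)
The plan is to establish the claimed factorisation $x=\kappa(\sigma(x))\xi(x)$ directly from the definitions. First I would fix $x\in M$ and set $n=\sigma(x)$. By definition of a cleavage, $\kappa(n)\in\sigma^{-1}(n)$ is $\sigma$-precartesian, so $\sigma(\kappa(n))=n=\sigma(x)$. Now I can apply Definition \ref{de.sig_precar} with the precartesian element $\kappa(n)$ playing the role of ``$x$'' and our $x$ playing the role of ``$z$'': since $\sigma(x)=\sigma(\kappa(n))$, there is a \emph{unique} $y\in\ker(\sigma)=A$ with $x=\kappa(n)y$. I would then simply define $\xi(x):=y$; this is well-defined precisely because of the uniqueness clause in Definition \ref{de.sig_precar}, and it manifestly satisfies $x=\kappa(\sigma(x))\xi(x)$ with $\xi(x)\in A$.

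For the uniqueness of the map $\xi$ as a whole, I would observe that any map $\xi':M\to A$ satisfying $x=\kappa(\sigma(x))\xi'(x)$ for all $x$ must, for each fixed $x$, give an element $\xi'(x)\in A$ with $\kappa(\sigma(x))\xi'(x)=x=\kappa(\sigma(x))\xi(x)$; since $\kappa(\sigma(x))$ is $\sigma$-precartesian and both $\xi'(x),\xi(x)$ lie in $\ker(\sigma)$, the weak cancellation property (noted right after Definition \ref{de.sig_precar}) forces $\xi'(x)=\xi(x)$. Hence $\xi'=\xi$.

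There is essentially no obstacle here — the statement is a more or less immediate unwinding of the defining property of precartesian elements, together with the axiom of choice already used to guarantee $\Clev(\sigma)\neq\emptyset$. The only point worth stating carefully is that the hypothesis ``$\sigma$ is a prefibration'' is what makes a cleavage exist in the first place, but for the lemma itself we only need that $\kappa(n)$ is precartesian for every $n$, which is part of the definition of a cleavage. If desired, one can also record that $\xi(1)=1$: since $\kappa(1)=1$ and $1=\kappa(1)\xi(1)=\xi(1)$, this follows at once; this small remark will be convenient when $\xi$ is used later to build the lax $N$-monoid structure on $A$.
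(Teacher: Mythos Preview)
Your proof is correct and follows essentially the same approach as the paper: both arguments fix $x\in M$, note that $\kappa(\sigma(x))$ is $\sigma$-precartesian over $\sigma(x)$, and invoke Definition~\ref{de.sig_precar} to obtain the unique $\xi(x)\in A$ with $x=\kappa(\sigma(x))\xi(x)$. Your write-up is simply more detailed, spelling out the uniqueness of $\xi$ as a map via the weak cancellation property and recording $\xi(1)=1$, neither of which the paper makes explicit.
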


  \begin{proof}
    Take $x\in M$. Then $\kappa(\sigma(x))$ is a $\sigma$-precartesian over $\sigma(x)$.
    Since $x$ is in the preimage of $\sigma(x)$, there exits a unique $\xi(x)\in A$ with
    the property $x=\kappa(\sigma(x))\xi(x)$, as desired.
  \end{proof}

  We will use Lemma \ref{le.cleav_sec-to-ker} several times. In particular, it allows us
  to define the two functions
  \begin{eqnarray*}
    \phi:A\times N\to A & & \phi_n(a)=\xi(a\kappa(n)), \\
    \gamma:N\times N\to A & & \gamma_{m,n}=\xi(\kappa(m)\kappa(n)).
  \end{eqnarray*}
  Here $\xi$ is a cocleavage induced by $\kappa$. These functions are characterised by
  the equalities 
  \begin{equation}\label{eq.kappa_def}
    a\kappa(n)=\kappa(n)\phi_n(a)
  \end{equation}
  and 
  \begin{equation}\label{eq.gam_def}
    \kappa(m)\kappa(n)=\kappa(mn)\gamma_{mn}.
  \end{equation}

  The following was proved by Grothendieck in the general case for fibrations.

  \begin{Pro}\label{pro.pref_lax_act}
    Let $\sigma:M\to N$ be a prefibration and $\kappa\in \Clev(\sigma)$. The functions
    $\phi:A\times N\to A$ and $\gamma:N\times N\to A$, defined in (\ref{eq.kappa_def})
    and (\ref{eq.gam_def}), give rise to a lax action of $N$ on $A$. If $\sigma:M\to N$
    is a fibration, we obtain a pseudo-action of $N$ on $A$.
  \end{Pro}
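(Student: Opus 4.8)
The plan is to verify conditions i)--v) of Definition \ref{de.lax_act} directly, using the characterising identities (\ref{eq.kappa_def}) and (\ref{eq.gam_def}) together with the weak cancellation property enjoyed by $\sigma$-precartesian elements. The overarching strategy is this: each identity we must prove is an equation between elements of $A$; I will show instead that the two sides become equal after left-multiplication by an appropriate $\sigma$-precartesian element of $M$ (typically $\kappa(mn)$ or $\kappa(mnk)$), and then invoke weak cancellation to conclude. This reduces everything to manipulations inside $M$ using only the two defining equations and functoriality of $\phi_n$ (condition v), which itself follows since $a\mapsto a\kappa(n)$ and $\xi$ are multiplicative in the appropriate sense — more precisely $\phi_n(ab)\kappa(n)=ab\kappa(n)=a\kappa(n)\phi_n(b)=\kappa(n)\phi_n(a)\phi_n(b)$, and $\phi_n(1)\kappa(n)=\kappa(n)=\kappa(n)\cdot 1$, so weak cancellation gives v); condition i) is immediate from $\kappa(1)=1$ and the definition of $\xi$; condition iv) follows from $\kappa(1)=1$ applied to (\ref{eq.gam_def}) with $m=1$ or $n=1$, again using weak cancellation.

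For condition ii), I would compute $\kappa(mn)\gamma_{m,n}\phi_n(\phi_m(a))$ and $\kappa(mn)\phi_{mn}(a)\gamma_{m,n}$ and show both equal $a\kappa(m)\kappa(n)$: starting from the left, $\kappa(mn)\gamma_{m,n}\phi_n(\phi_m(a)) = \kappa(m)\kappa(n)\phi_n(\phi_m(a)) = \kappa(m)\phi_m(a)\kappa(n) = a\kappa(m)\kappa(n)$, while $\kappa(mn)\phi_{mn}(a)\gamma_{m,n} = a\kappa(mn)\gamma_{m,n} = a\kappa(m)\kappa(n)$; then weak cancellation for $\kappa(mn)$ yields ii). For condition iii), the associativity of multiplication in $M$ is the engine: expand $\kappa(m)\kappa(n)\kappa(k)$ in two ways, once as $(\kappa(m)\kappa(n))\kappa(k) = \kappa(mn)\gamma_{m,n}\kappa(k) = \kappa(mn)\kappa(k)\phi_k(\gamma_{m,n}) = \kappa(mnk)\gamma_{mn,k}\phi_k(\gamma_{m,n})$, and once as $\kappa(m)(\kappa(n)\kappa(k)) = \kappa(m)\kappa(nk)\gamma_{n,k} = \kappa(m(nk))\gamma_{m,nk}\gamma_{n,k} = \kappa(mnk)\gamma_{m,nk}\gamma_{n,k}$; comparing and cancelling $\kappa(mnk)$ gives iii).

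Finally, for the pseudo-action claim: if $\sigma$ is a fibration, then by Lemma \ref{le.sig_car=pcar} the cleavage $\kappa$ takes $\sigma$-cartesian values, and I must show $\gamma_{m,n}\in A^\sx$. Since $\kappa(m)$ and $\kappa(n)$ are $\sigma$-cartesian, their product $\kappa(m)\kappa(n)$ is $\sigma$-cartesian by Lemma \ref{le.prod_cart} ii), and it lies above $mn$, as does $\kappa(mn)$; by Corollary \ref{cor.sig_cart_uniq_h} the unique $h\in\ker(\sigma)$ with $\kappa(m)\kappa(n)=\kappa(mn)h$ is invertible, and that $h$ is exactly $\gamma_{m,n}$ by (\ref{eq.gam_def}). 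I expect the main obstacle to be purely bookkeeping: making sure every application of weak cancellation is legitimate (i.e. the element being cancelled really is $\sigma$-precartesian and the two products being equated genuinely have the claimed common left factor), and keeping the order of multiplication straight throughout, since $A$ need not be commutative and $\phi_n$ is only a right action up to the $2$-cocycle $\gamma$. There is no deep idea here beyond Grothendieck's original argument; the content is entirely in translating his cartesian-lifting manipulations into the one-object, monoid-theoretic language.
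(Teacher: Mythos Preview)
Your proof is correct and takes essentially the same approach as the paper: verify i)--v) by computing suitable products in $M$ two ways and applying weak cancellation for the appropriate $\kappa$-value, then for the fibration case invoke closure of (pre)cartesian elements under products together with Lemma \ref{le.sig_pre_inv} (the paper) or, equivalently, Corollary \ref{cor.sig_cart_uniq_h} (you). One minor slip: in your argument for v) the chain should begin $\kappa(n)\phi_n(ab)=ab\kappa(n)$ rather than $\phi_n(ab)\kappa(n)=ab\kappa(n)$, since (\ref{eq.kappa_def}) places $\kappa(n)$ on the left of $\phi_n(a)$; the rest of that computation and the cancellation step are fine.
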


  \begin{proof}
    We have to check that Conditions i)-v) of Definition \ref{de.lax_act} are satisfied.

    i) Let $n=1$. Then $a\kappa(1)=\kappa(1)\phi_(a)$. Since $\kappa$ is normalised, we
    obtain $a=\kappa_1(a)$.
    
    ii) We have
    \[a\kappa(mn)=\kappa(mn)\phi_{mn}(a)\]
    for any $a\in A$ and $m,n\in M$ by the definition of the function $\phi$. It follows
    that
    \begin{align*}
      a\kappa(m)\kappa(n)&=a\kappa(mn)\gamma_{mn} \\
                         &=\kappa(mn)\phi_{mn}(a)\gamma_{mn}.
    \end{align*}
    We also have
    \begin{align*}
      a\kappa(m)\kappa(n)=&\kappa(m)\phi_m(a)\kappa(n) \\
                          &=\kappa(m)\kappa(n)\phi_n(\phi_m(a)) \\
                          &=\kappa(mn)\gamma_{mn}\phi_n(\phi_m(a)).
    \end{align*}
    Comparing these expressions and using the weak cancellation property, we obtain
    \[\phi_{mn}(a)\gamma_{mn}=\gamma_{mn}\phi_n(\phi_m(a)).\]

    iii) We have
    \begin{align*}
      \kappa(m)\kappa(n)\kappa(k)&=\kappa(mn)\gamma_{mn}\kappa(k) \\
                                 &=\kappa(mn)\kappa(k)\phi_k(\gamma_{mn}) \\
                                 &=\kappa(mnk)\gamma_{mn,k}\phi_k(\gamma_{mn}).
    \end{align*}
    On the others hand,
    \begin{align*}
      \kappa(m)\kappa(n)\kappa(k)&=\kappa(m)\kappa(nk)\gamma_{nk} \\
      &=\kappa(mnk)\gamma_{m,nk}\gamma_{nk}.
    \end{align*}
    Comparing these expressions and using the weak cancellation property, we obtain
    \[\gamma_{mn,k}\phi_k(\gamma_{mn})=\gamma_{m,nk}\gamma_{nk}.\]
    
    iv) By definition, $\kappa(1)\kappa(n)=\kappa(n)\gamma_{1,n}$. Since $\kappa(1)=1$,
    we see that $\gamma_{1,n}=1$. This is due to the uniqueness of $\gamma$.
    
    v) We wish to show that $\phi_n:A\to A$ is a monoid homomorphism. Take $a,b\in A$.
    We have
    \[\kappa(n)\phi_n(ab)=(ab)\kappa(n)=a\kappa(n)\phi_n(b)=\kappa(n)\phi_n(a)\phi_n(b).\]
    By the uniqueness property, $\phi_n(ab)=\phi_n(a)\phi_n(b)$, which gives us the last
    condition.

    If $\sigma$ is a fibration, then $\kappa(m)\kappa(n)$ is $\sigma$-precartesian by
    Lemma \ref{le.prod_cart}. This allows us to use Lemma \ref{le.sig_pre_inv} which
    yields the invertibility of $\gamma_{mn}$.
  \end{proof}

  Thus, having fixed a cleavage $\kappa$ of a prefibration $\sigma:M\to N$, we obtain a
  lax $M$-monoid structure on $A=\ker(\sigma)$. The Grothendieck construction gives us a
  prefibration $\Groth(A,\phi,\gamma)\to N$. Our next aim is to show that these
  prefibrations are isomorphic.

  \begin{Le}\label{le.Groth_M_iso}
    The map $\bar{\alpha}:\Groth(A,\phi,\gamma)\to M$, given by
    \[\bar{\alpha}(m,a)=\kappa(m)a,\]
    is an isomorphism in $\fFib_N$. Moreover, this morphism is also cartesian.
  \end{Le}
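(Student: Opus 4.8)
The plan is to exhibit an explicit two-sided inverse for $\bar\alpha$ and then check that both maps are morphisms in $\fFib_N$ (i.e.\ commute with the projections to $N$) and that $\bar\alpha$ is cartesian. First I would define the candidate inverse $\beta\colon M\to\Groth(A,\phi,\gamma)$ by $\beta(x)=(\sigma(x),\xi(x))$, where $\xi$ is the cocleavage induced by $\kappa$ from Lemma \ref{le.cleav_sec-to-ker}. The fact that $\beta$ and $\bar\alpha$ are mutually inverse as maps of sets is essentially a restatement of the defining equation $x=\kappa(\sigma(x))\xi(x)$: one direction gives $\bar\alpha(\beta(x))=\kappa(\sigma(x))\xi(x)=x$ immediately, while the other, $\beta(\bar\alpha(m,a))=\beta(\kappa(m)a)=(\sigma(\kappa(m)a),\xi(\kappa(m)a))=(m,\xi(\kappa(m)a))$, requires knowing $\xi(\kappa(m)a)=a$; this follows from the uniqueness clause in Lemma \ref{le.cleav_sec-to-ker} applied to the factorisation $\kappa(m)a=\kappa(\sigma(\kappa(m)a))\cdot a=\kappa(m)\cdot a$, since $\sigma(\kappa(m)a)=\sigma(\kappa(m))=m$.

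Next I would verify that $\bar\alpha$ is a monoid homomorphism. Compute
\[
\bar\alpha\big((m,a)(n,b)\big)=\bar\alpha\big(mn,\gamma_{m,n}\phi_n(a)b\big)=\kappa(mn)\gamma_{m,n}\phi_n(a)b,
\]
and use Equation (\ref{eq.gam_def}) to replace $\kappa(mn)\gamma_{m,n}$ by $\kappa(m)\kappa(n)$, then Equation (\ref{eq.kappa_def}) to move $\phi_n(a)$ past $\kappa(n)$: $\kappa(n)\phi_n(a)=a\kappa(n)$, so the expression becomes $\kappa(m)a\kappa(n)b=\bar\alpha(m,a)\bar\alpha(n,b)$. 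It is also clear that $\bar\alpha(1,1)=\kappa(1)\cdot1=1$. Since $\bar\alpha$ is a bijective monoid homomorphism, it is an isomorphism of monoids, and $\beta$ is automatically a homomorphism too.

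For the compatibility with the fibration structures, note that $\sigma(\bar\alpha(m,a))=\sigma(\kappa(m)a)=\sigma(\kappa(m))=m$, which equals the projection $\Groth(A,\phi,\gamma)\to N$ evaluated at $(m,a)$; hence $\bar\alpha$ (and likewise $\beta$) is a morphism in $\fMon/N$, and being an isomorphism, it lies in $\fFib_N$. Finally, for the cartesian claim one must check $\bar\alpha(\Pcar)\subseteq\Pcar$ and, by symmetry via the inverse, the reverse inclusion; but in fact an isomorphism in $\fMon/N$ automatically preserves $\sigma$-precartesian elements in both directions, since precartesianness is a property formulated purely in terms of the projection to $N$ which is respected by $\bar\alpha$. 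The main obstacle, such as it is, is bookkeeping: making sure the two uniqueness arguments (for $\xi(\kappa(m)a)=a$ and for the well-definedness of the inverse) are invoked with the correct base element $m\in N$, and that the rewriting in the homomorphism check uses (\ref{eq.gam_def}) and (\ref{eq.kappa_def}) in the right order; none of this is conceptually deep once the explicit inverse $\beta$ is written down.
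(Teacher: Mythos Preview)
Your proof is correct and follows essentially the same approach as the paper: the central computation, verifying that $\bar\alpha$ is a monoid homomorphism via (\ref{eq.gam_def}) and (\ref{eq.kappa_def}), is identical. The paper simply declares that ``the rest is clear'' and omits the explicit inverse $\beta(x)=(\sigma(x),\xi(x))$, the bijectivity check, and the cartesianness argument you spell out; your additions are sound but not a genuinely different route.
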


  \begin{proof}
    The only think to check is that $\bar{\alpha}$ is a homomorphism of monoids, as the
    rest is clear. Take two elements $(m,a)$ and $(n,b)$ in $\Groth(A,\phi,\gamma)$. We
    have
    \begin{align*}
      \bar{\alpha}((m,a)(n,b))&=\bar{\alpha}(mn, \gamma_{m,n}\phi_n(a)b) \\
                              &=\kappa(mn)\gamma_{m,n}\phi_n(a)b \\
                              &=\kappa(m)\kappa(n)\phi_n(a)b \\
                              &=\kappa(m)a\kappa(b) \\
                              &=\bar{\alpha}(m,a)\bar{\alpha}(n,b).
    \end{align*}
  \end{proof}

  \begin{Pro}\label{pro.clev_to_lax}
    Let $\sigma:M\to N$ and $\sigma':M'\to N$ be prefibrations and
    $\bar{\alpha}:M\to M'$ be monoid homomorphism for which the following diagram
    commutes
    \[\xymatrix{
      M\ar[rr]^{\bar{\alpha}}\ar[dr]_{\sigma}&&M'\ar[dl]^{\sigma'}\\
      &N.&
    }\]
    Choose cleavages $\kappa$ and $\kappa'$ of $\sigma$ and $\sigma'$, and let $(A,
    \phi,\gamma)$ and $(A',\phi',\gamma')$ be their corresponding lax $N$-monoids,
    constructed in Proposition \ref{pro.pref_lax_act}. The pair
    \[(\alpha:A\to A',\tau:N\to A')\]
    is a lax morphism
    $(A,\phi,\gamma)\to(A',\phi',\gamma')$, where $\alpha$ is the restriction of
    $\bar{\alpha}$ on $A=\ker(\sigma)$ and $\tau$ is defined uniquely by
    \[\bar{\alpha}(\kappa(n))=\kappa'(n)\tau_n.\]
  \end{Pro}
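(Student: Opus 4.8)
The plan is to verify directly that the pair $(\alpha,\tau)$ satisfies the two defining identities of a lax $M$-homomorphism, namely
\[
\phi'_n(\alpha(a))\tau_n=\tau_n\alpha(\phi_n(a))
\qquad\text{and}\qquad
\gamma'_{m,n}\phi'_n(\tau_m)\tau_n=\tau_{mn}\alpha(\gamma_{m,n}),
\]
together with the normalisation $\tau_1=1$. Everything will come from two sources: the defining equation $\bar\alpha(\kappa(n))=\kappa'(n)\tau_n$ of $\tau$, and the characterising equations \eqref{eq.kappa_def} and \eqref{eq.gam_def} for the two lax structures, pushed through the homomorphism $\bar\alpha$. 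The key leverage is the \emph{weak cancellation property} for $\sigma'$-precartesian elements (in particular for $\kappa'(n)$ and, via Lemma~\ref{le.prod_cart}, for products of cleavage values): once I rewrite an element of $M'$ in the form $\kappa'(\ell)\,(\text{something in }A')$ in two ways with the same $\kappa'(\ell)$, I may cancel and conclude equality of the $A'$-parts.

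First I would check $\tau_1=1$: since $\kappa(1)=1$ and $\kappa'(1)=1$ and $\bar\alpha$ is a homomorphism, $\bar\alpha(\kappa(1))=1=\kappa'(1)\tau_1$, so $\tau_1=1$ by weak cancellation. Next, for the first identity, start from \eqref{eq.kappa_def} in $M$, $a\kappa(n)=\kappa(n)\phi_n(a)$, and apply $\bar\alpha$; using $\bar\alpha|_A=\alpha$, $\bar\alpha(\kappa(n))=\kappa'(n)\tau_n$, and that $\alpha$ maps into $A'=\ker(\sigma')$, I get
\[
\alpha(a)\kappa'(n)\tau_n=\kappa'(n)\tau_n\alpha(\phi_n(a)).
\]
Now rewrite the left side using \eqref{eq.kappa_def} for $\sigma'$: $\alpha(a)\kappa'(n)=\kappa'(n)\phi'_n(\alpha(a))$, so the left side becomes $\kappa'(n)\phi'_n(\alpha(a))\tau_n$. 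Both sides are now $\kappa'(n)$ times an element of $A'$ (note $\tau_n\in A'$ and $\phi'_n$ preserves $A'$), so weak cancellation for the $\sigma'$-precartesian $\kappa'(n)$ yields $\phi'_n(\alpha(a))\tau_n=\tau_n\alpha(\phi_n(a))$, as desired.

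For the second (cocycle) identity, I would start from \eqref{eq.gam_def} in $M$, $\kappa(m)\kappa(n)=\kappa(mn)\gamma_{m,n}$, and apply $\bar\alpha$, obtaining
\[
\kappa'(m)\tau_m\,\kappa'(n)\tau_n=\kappa'(mn)\tau_{mn}\,\alpha(\gamma_{m,n}).
\]
On the left, move $\tau_m$ past $\kappa'(n)$ using \eqref{eq.kappa_def} for $\sigma'$: $\tau_m\kappa'(n)=\kappa'(n)\phi'_n(\tau_m)$, so the left side is $\kappa'(m)\kappa'(n)\phi'_n(\tau_m)\tau_n$; then apply \eqref{eq.gam_def} for $\sigma'$, $\kappa'(m)\kappa'(n)=\kappa'(mn)\gamma'_{m,n}$, to get $\kappa'(mn)\gamma'_{m,n}\phi'_n(\tau_m)\tau_n$. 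Both sides are now $\kappa'(mn)$ times an element of $A'$, so weak cancellation for the $\sigma'$-precartesian $\kappa'(mn)$ gives $\gamma'_{m,n}\phi'_n(\tau_m)\tau_n=\tau_{mn}\alpha(\gamma_{m,n})$, which is exactly the required identity. (That $\alpha$ is a monoid homomorphism $A\to A'$ is immediate since $\bar\alpha$ is a homomorphism commuting with $\sigma,\sigma'$, hence restricts to kernels.) The main obstacle is purely bookkeeping: keeping careful track of which factors lie in $A'$ so that the weak cancellation property applies, and remembering that $\phi'_n$ and $\tau$ take values in $A'$; there is no conceptual difficulty beyond the repeated use of \eqref{eq.kappa_def}, \eqref{eq.gam_def} and Lemma~\ref{le.sig_pre_inv}/\ref{le.prod_cart}.
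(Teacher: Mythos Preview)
Your proposal is correct and follows essentially the same route as the paper: apply $\bar\alpha$ to the defining equations \eqref{eq.kappa_def} and \eqref{eq.gam_def}, rewrite using the corresponding equations for $\sigma'$, and cancel the $\sigma'$-precartesian prefix $\kappa'(n)$ (resp.\ $\kappa'(mn)$) via the weak cancellation property. You are in fact slightly more thorough than the paper, which omits the explicit checks that $\tau_1=1$ and that $\alpha$ is a monoid homomorphism into $A'$; note also that the reference to Lemma~\ref{le.prod_cart} at the end is unnecessary, since you cancel $\kappa'(mn)$ directly rather than the product $\kappa'(m)\kappa'(n)$.
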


  \begin{proof}
    We have $a\kappa(n)=\kappa(n)\phi_n(a)$. Apply $\bar{\alpha}$ to obtain
    \[\alpha(a)\bar{\alpha}(\kappa(n))=\bar{\alpha}(\kappa(n))\alpha(\phi_n(a)).\]
    So
    \[\alpha(a)\kappa'(n)\tau_n=\kappa'(n)\tau_n\alpha(\phi_n(a)).\]
    This implies
    \[\kappa'(n)\phi'_n(\alpha(a))\tau_n=\kappa'(n)\tau_n\alpha(\phi_n(a)).\]
    By uniqueness, we obtain
    \[\phi'_n(\alpha(a))\tau_n=\tau_n\alpha(\phi_n(a)).\]
    Next, we have $\kappa(m)\kappa(n)=\kappa(mn)\gamma_{m,n}$. Apply $\bar{\alpha}$ to
    obtain
    \[\bar{\alpha}(\kappa(m))\bar{\alpha}(\kappa(\alpha))=
    \bar{\alpha}\kappa(mn)\alpha(\gamma_{mn}).\]
    This gives us
    \[\kappa'(m)\tau_m\kappa'(n)\tau_n=\kappa'(mn)\tau_{mn}\gamma_{m,n}\]
    \[\kappa'(m)\kappa'(n)\phi'_n(\tau_m)\tau_n=\kappa'(mn)\tau_{mn}\gamma_{m,n}\]
    and thus
    \[\gamma'_{m,n}\phi'_n(\tau_m)\tau_n=\tau_{mn}\gamma_{m,n}.\]
  \end{proof}

\subsection{Dependence on cleavages}

  We wish to discuss the dependence of the pair $(\phi,\gamma)$ on the chosen
  cleavages. Let $\kappa,\tilde{\kappa}\in\Clev(\sigma)$. By Lemma
  \ref{cor.sig_cart_uniq_h}, there exists a unique $\eta(n)\in A^\sx$ for any $n$, such
  that $\tilde{\kappa}(n)=\kappa(n)\eta(n)$. Clearly $\eta(1)=1$. Denote by $\Maps_*(N,
  A^\sx)$ the group of all maps $\eta:N\to A^\sx$ with the property $\eta(1)=1$. The
  group $\Maps_*(N, A^\sx)$ acts on $\Clev(\sigma)$ from the right:
  \[(\kappa \cdot \eta)(n):=\kappa(n)\eta(n).\]
  One easily sees that $\Clev(\sigma)$ is a right $\Maps_*(N, A^\sx)$-torsor. This
  is to say, the action is transitive and free.

  \begin{Le}\label{le.cob}
    Let $\sigma:M\to N$ be a prefibration, $\kappa,\tilde{\kappa}\in\Clev$ and
    $\eta:N\to A^\sx$ be the unique map for which $\tilde{\kappa}(n)=\kappa (n)\eta(n)$.
    Consider the pair of functions $(\phi,\gamma)$ and $(\tilde{\phi},\tilde{\gamma})$
    determined by the equations (\ref{eq.kappa_def}), (\ref{eq.gam_def}) corresponding
    to the cleavages $\kappa$ and $\tilde{\kappa}$. We have
    \[\tilde{\phi}_n(a)=\eta(n)^{-1}\phi_n(a)\eta(n)\]
    and
    \[\tilde{\gamma}_{m,n}=\eta(mn)^{-1}\gamma_{m,n}\phi_n(\eta(m))\eta(n).\]
  \end{Le}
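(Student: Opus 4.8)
The plan is to compute both $\tilde\phi_n(a)$ and $\tilde\gamma_{m,n}$ directly from the defining equations (\ref{eq.kappa_def}) and (\ref{eq.gam_def}) applied to the cleavage $\tilde\kappa$, then substitute $\tilde\kappa(n)=\kappa(n)\eta(n)$ and push all occurrences of $\eta$ to the ends using the characterising identity $a\kappa(n)=\kappa(n)\phi_n(a)$ for $a\in A$ (in particular for $a=\eta(m)\in A^\sx$), finally invoking the weak cancellation property to strip off a leading $\kappa(\text{-})$.

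For the first formula, I would start from $a\tilde\kappa(n)=\tilde\kappa(n)\tilde\phi_n(a)$, i.e. $a\kappa(n)\eta(n)=\kappa(n)\eta(n)\tilde\phi_n(a)$. On the left, rewrite $a\kappa(n)=\kappa(n)\phi_n(a)$, giving $\kappa(n)\phi_n(a)\eta(n)=\kappa(n)\eta(n)\tilde\phi_n(a)$. Since $\kappa(n)$ is $\sigma$-precartesian and $\phi_n(a)\eta(n),\ \eta(n)\tilde\phi_n(a)\in A$, the weak cancellation property yields $\phi_n(a)\eta(n)=\eta(n)\tilde\phi_n(a)$, hence $\tilde\phi_n(a)=\eta(n)^{-1}\phi_n(a)\eta(n)$ using invertibility of $\eta(n)$.

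For the second formula, begin with $\tilde\kappa(m)\tilde\kappa(n)=\tilde\kappa(mn)\tilde\gamma_{m,n}$, i.e. $\kappa(m)\eta(m)\kappa(n)\eta(n)=\kappa(mn)\eta(mn)\tilde\gamma_{m,n}$. On the left, use $\eta(m)\kappa(n)=\kappa(n)\phi_n(\eta(m))$ to get $\kappa(m)\kappa(n)\phi_n(\eta(m))\eta(n)$, then $\kappa(m)\kappa(n)=\kappa(mn)\gamma_{m,n}$, so the left side becomes $\kappa(mn)\gamma_{m,n}\phi_n(\eta(m))\eta(n)$. Comparing with the right side $\kappa(mn)\eta(mn)\tilde\gamma_{m,n}$ and applying weak cancellation to the $\sigma$-precartesian element $\kappa(mn)$ gives $\gamma_{m,n}\phi_n(\eta(m))\eta(n)=\eta(mn)\tilde\gamma_{m,n}$, and solving for $\tilde\gamma_{m,n}$ (again $\eta(mn)$ is invertible) yields the claimed identity.

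There is no serious obstacle here; the only thing to be careful about is ensuring that all the intermediate elements being cancelled genuinely lie in $\ker(\sigma)$ — which holds because $\eta$ takes values in $A^\sx\subseteq A$, $\phi_n$ preserves $A$, and $\gamma_{m,n}\in A$ — so that the weak cancellation property of the precartesian elements $\kappa(n)$ and $\kappa(mn)$ legitimately applies. One should also note that $\phi_n(\eta(m))$ is invertible (as $\phi_n$ is a monoid homomorphism by Proposition \ref{pro.pref_lax_act} v)), which is implicit in writing the formula but not actually needed for the derivation itself.
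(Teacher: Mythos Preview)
Your proposal is correct and follows essentially the same argument as the paper's own proof: substitute $\tilde\kappa(n)=\kappa(n)\eta(n)$ into the defining equations, push $\eta$ through using $a\kappa(n)=\kappa(n)\phi_n(a)$, and invoke the weak cancellation property of the precartesian element $\kappa(n)$ (resp.\ $\kappa(mn)$). Your added remark verifying that the cancelled factors lie in $\ker(\sigma)$ is a welcome bit of care that the paper leaves implicit.
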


  \begin{proof}
    The equality (\ref{eq.kappa_def}), applied to $\tilde{\kappa}$, gives us $a\kappa
    (n)\eta(n)=\kappa(n)\eta(n)\tilde{\phi}_n(a)$. It follows that
    \[\kappa(n)\phi_n(a)\eta(n)=\kappa(n)\eta(n)\tilde{\phi}_n(a).\]
    The weak cancellation property gives us the first equality.

    The same reasoning also gives us
    \[\kappa(m)\eta(m)\kappa(n)\eta(n)=\kappa(mn)\eta(mn)\tilde{\gamma}_{m,n}.\]
    Since the LHS us equal to
    \[\kappa(m)\kappa(n)\phi_n(\eta(m))\eta(n)=
    \kappa(mn)\gamma_{m,n}\phi_n(\eta(m))\eta(n),\]
    we obtain $\gamma_{m,n}\phi_n(\eta(m))\eta(n)=\eta(mn)\tilde{\gamma}_{m,n}$, and the
    second formula follows.
  \end{proof}

  This gives us solid control on the lax $N$-monoid structures arising from
  prefibrations on a given monoid $N$. The next question we wish to answer pertains to
  the morphisms of prefibrations on $N$.

  Assume we have a commutative diagram
  \[\xymatrix{M\ar[rr]^{\bar{\alpha}}\ar[dr]_{\sigma}&&M'\ar[dl]^{\sigma'}\\
    &N&},\]
  where $\sigma$ and $\sigma'$ are prefibrations. Recall that by Proposition
  \ref{pro.clev_to_lax}, if we choose cleavages $\kappa$ and $\kappa'$ for $\sigma$ and
  $\sigma'$, respectively, we obtain a morphism
  $(\alpha,\tau):(A,\phi,\gamma)\to(A',\phi',\gamma')$. Assume we have chosen nother
  cleavages $\tilde{\kappa}$ and $\tilde{\kappa'}$. As already seen, there will be
  functions $\eta:N\to A^\sx$ and $\eta':N\to A^{'\sx}$ such that
  \[\tilde{\kappa}(n)=\kappa(n)\eta(n),\quad{\rm and}\quad\tilde{\kappa'}(n)
    =\kappa'(n)\eta'(n).\]
  Let us apply Proposition \ref{pro.pref_lax_act}, first to the pair $\kappa, \kappa'$,
  and then to $\tilde{\kappa}, \tilde{\kappa}'$. We obtain pairs
  \[(\alpha:A\to A',\tau:N\to A')\quad{\rm and}\quad(\tilde{\alpha}:A\to A',
    \tilde{\tau}:N\to A').\]
  These are lax morphisms $(A,\phi,\gamma)\to(A',\phi',\gamma')$ and
  $(A,\tilde{\phi},\tilde{\gamma})\to(A',\tilde{\phi}',\tilde{\gamma}')$. Recall also
  that both $\alpha$ and $\tilde{\alpha}$ are restrictions of $\bar{\alpha}$ on
  $A=\ker(\sigma)$. This implies
  \[\tilde{\alpha}=\alpha.\]
  It follows that $\tau$ and $\tilde{\tau}$ are uniquely defined by
  $\bar{\alpha}(\kappa(n))=\kappa'(n)\tau_n$ and
  $\bar{\alpha}(\tilde{\kappa}(n))=\tilde{\kappa}'(n)\tilde{\tau}_n$, respectively.

  \begin{Le}
    In the above notation, we have
    \[\tilde{\tau}(n)=\eta'(n)^{-1}\tau(n)\alpha(\eta(n)).\]
  \end{Le}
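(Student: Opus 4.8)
The plan is to mimic the computation in the proof of Lemma \ref{le.cob}, but now tracking the homomorphism $\bar{\alpha}$ at the same time. The key relation to exploit is the defining equation $\bar{\alpha}(\kappa(n)) = \kappa'(n)\tau_n$ together with its tilde-version $\bar{\alpha}(\tilde{\kappa}(n)) = \tilde{\kappa}'(n)\tilde{\tau}_n$, and the two substitution rules $\tilde{\kappa}(n) = \kappa(n)\eta(n)$, $\tilde{\kappa}'(n) = \kappa'(n)\eta'(n)$.

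First I would start from $\bar{\alpha}(\tilde{\kappa}(n)) = \tilde{\kappa}'(n)\tilde{\tau}_n$ and expand both sides. On the left, $\bar{\alpha}(\tilde{\kappa}(n)) = \bar{\alpha}(\kappa(n)\eta(n)) = \bar{\alpha}(\kappa(n))\,\alpha(\eta(n)) = \kappa'(n)\tau_n\alpha(\eta(n))$, using that $\eta(n)\in A^\sx\subseteq A=\ker(\sigma)$ so $\bar{\alpha}$ restricted to $A$ is $\alpha$. On the right, $\tilde{\kappa}'(n)\tilde{\tau}_n = \kappa'(n)\eta'(n)\tilde{\tau}_n$. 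Equating these gives $\kappa'(n)\tau_n\alpha(\eta(n)) = \kappa'(n)\eta'(n)\tilde{\tau}_n$. Since $\kappa'(n)$ is $\sigma'$-precartesian and both $\tau_n\alpha(\eta(n))$ and $\eta'(n)\tilde{\tau}_n$ lie in $A' = \ker(\sigma')$, the weak cancellation property yields $\tau_n\alpha(\eta(n)) = \eta'(n)\tilde{\tau}_n$. Finally, since $\eta'(n)\in A'^\sx$ is invertible, I can solve for $\tilde{\tau}_n = \eta'(n)^{-1}\tau_n\alpha(\eta(n))$, which is exactly the claimed formula.

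I do not expect any genuine obstacle here: the only subtle points are (i) checking that the elements being cancelled genuinely lie in the relevant kernels so that the weak cancellation property of the $\sigma'$-precartesian element $\kappa'(n)$ applies, and (ii) that $\tilde{\tau}_n$ is well-defined, which is already guaranteed by Proposition \ref{pro.clev_to_lax} applied to the cleavages $\tilde{\kappa}, \tilde{\kappa}'$. Both are immediate. The proof is therefore a short direct computation; one need only be careful that $\bar{\alpha}$ is applied before the substitution of $\tilde{\kappa}$ in terms of $\kappa$ and $\eta$, so that the relation $\bar{\alpha}(\kappa(n)) = \kappa'(n)\tau_n$ can be invoked.
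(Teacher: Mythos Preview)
Your proposal is correct and follows essentially the same argument as the paper: both start from the defining equation $\bar{\alpha}(\tilde{\kappa}(n))=\tilde{\kappa}'(n)\tilde{\tau}_n$, expand each side via $\tilde{\kappa}(n)=\kappa(n)\eta(n)$ and $\tilde{\kappa}'(n)=\kappa'(n)\eta'(n)$, and then invoke the weak cancellation property of the $\sigma'$-precartesian element $\kappa'(n)$ together with the invertibility of $\eta'(n)$. Your version merely spells out a few points (such as why the cancelled elements lie in $A'$) that the paper leaves implicit.
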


  \begin{proof}
    By definition, we have
    \[\bar{\alpha}(\tilde{\kappa}(n))=\tilde{\kappa'(n)}\tilde{\tau}(n).\]
    The RHS equals to $\kappa'(n)\eta'(n)\tilde{\tau}(n)$, while the LHS is
    $\bar{\alpha}(\kappa(n)\eta(n))=\kappa'(n)\tau_n\alpha(\eta(n))$. Comparing
    these expression and using the weak cancellation property yields the desired
    result.
  \end{proof}

\section{On automorphism groups}\label{sec.autfib}

  Let us fix a prefibration of monoids $\sigma:M\to N$ for this section. Denote the
  kernel of $\sigma$ by $A$ and choose a cleavage $\kappa:N\to M$. We have already seen
  that there are unique functions
  \[\phi:A\times N\to A \quad{\rm and}\quad \gamma:N\times N\to A\]
  satisfying the conditions
  \[a\kappa(n)=\kappa(n)\phi_n(a) \quad{\rm and }\quad
    \kappa(m)\kappa(n)=\kappa(mn)\gamma_{mn}.\]
  According to Lemma \ref{pro.pref_lax_act}, $(A,\phi, \gamma)$ is a lax action of $N$ on
  $A$. This can be used to reconstruct $\sigma :M\to A$ as the Grothendieck construction
  of $(A,\phi, \gamma)$.

  Let $\aut_A(M)$ be the set of all automorphisms of $\psi:M\to M$ such that
  $\sigma(A)=A$ and $\psi$ is cartesian. By cartesian we mean
  \[\psi(\Pcar(\sigma))\subseteq\Pcar(\sigma).\]
  For any $\psi\in\aut_A(M)$, denote by $\theta_\psi:A\to A$ (or simply $\theta$) the
  restriction of $\psi$ on $A$. We have $\theta\in\aut(A)$ by assumption.

  \begin{Le}\label{le.aut_subgroup}
    The subset $\aut_A(M)$ is a subgroup of $\aut(M)$.
  \end{Le}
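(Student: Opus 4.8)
To show $\aut_A(M)$ is a subgroup of $\aut(M)$, I need to verify that it contains the identity, is closed under composition, and is closed under taking inverses. The identity map obviously fixes $A$ setwise and sends $\Pcar(\sigma)$ into itself, so $\id_M \in \aut_A(M)$. For closure under composition, suppose $\psi_1, \psi_2 \in \aut_A(M)$. Then $(\psi_1\psi_2)(A) = \psi_1(\psi_2(A)) = \psi_1(A) = A$, and $(\psi_1\psi_2)(\Pcar(\sigma)) = \psi_1(\psi_2(\Pcar(\sigma))) \subseteq \psi_1(\Pcar(\sigma)) \subseteq \Pcar(\sigma)$, so $\psi_1\psi_2 \in \aut_A(M)$. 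The main work is closure under inverses.

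So let $\psi \in \aut_A(M)$; I must show $\psi^{-1} \in \aut_A(M)$. The condition $\psi^{-1}(A) = A$ is immediate from $\psi(A) = A$ together with $\psi$ being a bijection: applying $\psi^{-1}$ to $\psi(A) = A$ gives $A = \psi^{-1}(A)$. The genuinely nontrivial point is that $\psi^{-1}(\Pcar(\sigma)) \subseteq \Pcar(\sigma)$, i.e.\ that $\psi^{-1}$ is again cartesian. For this I would argue as follows. First note that since $\psi(A) = A$, the automorphism $\psi$ descends to an automorphism $\bar\psi$ of $N = M/A$ — more precisely, there is an automorphism $\bar\psi$ of $N$ with $\sigma\psi = \bar\psi\sigma$; indeed $\sigma\psi$ kills $A = \ker\sigma$ (as $\psi(A)=A\subseteq\ker\sigma$), hence factors through $\sigma$, and the resulting endomorphism of $N$ is invertible because $\psi$ is. Consequently $\sigma\psi^{-1} = \bar\psi^{-1}\sigma$ as well.

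Now take $x \in \Pcar(\sigma)$ and set $w = \psi^{-1}(x)$; I claim $w$ is $\sigma$-precartesian. Suppose $z \in M$ with $\sigma(z) = \sigma(w)$. Then $\sigma(\psi(z)) = \bar\psi(\sigma(z)) = \bar\psi(\sigma(w)) = \sigma(\psi(w)) = \sigma(x)$. Since $x$ is $\sigma$-precartesian, there is a unique $y \in \ker(\sigma)$ with $\psi(z) = xy$. Applying $\psi^{-1}$ and using $\psi^{-1}(A) = A$, we get $z = w\,\psi^{-1}(y)$ with $\psi^{-1}(y) \in \ker(\sigma)$, which gives the existence part of Definition \ref{de.sig_precar}. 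For uniqueness, if $z = w h'$ with $h' \in \ker(\sigma)$, then applying $\psi$ gives $\psi(z) = x\,\psi(h')$ with $\psi(h') \in \ker(\sigma)$ (again using $\psi(A) = A$), so $\psi(h') = y$ by the weak cancellation property for $x$, hence $h' = \psi^{-1}(y)$. Thus $w \in \Pcar(\sigma)$, so $\psi^{-1}$ is cartesian and $\psi^{-1} \in \aut_A(M)$. The only subtlety worth flagging is the passage to $\bar\psi$ on $N$ and the observation that $\psi$ preserving $A$ setwise automatically forces $\psi^{-1}$ to do so; everything else is a direct transport of the defining property of precartesian elements along the bijection $\psi$.
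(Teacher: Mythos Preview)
Your argument is essentially correct and takes a different route from the paper's. The paper writes $x=\kappa(n)a$ using the fixed cleavage, observes that $\psi(\kappa(n))$ and $\psi(x)$ are both $\sigma$-precartesian over the same element of $N$, invokes Lemma~\ref{le.sig_pre_inv} to conclude $\theta(a)\in A^\sx$ and hence $a\in A^\sx$, and then notes that $\kappa(n)a$ is precartesian. You instead verify Definition~\ref{de.sig_precar} for $w=\psi^{-1}(x)$ directly by transporting the precartesian property along the bijection $\psi$, which is arguably more transparent and does not appeal to Lemmas~\ref{le.sig_pre_inv} or~\ref{le.prod_cart}.

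There is, however, one genuine gap in your justification of the induced map $\bar\psi$. The implication ``$\sigma\psi$ kills $A=\ker\sigma$, hence factors through $\sigma$'' and the notation ``$N=M/A$'' are group-theoretic: for groups a surjection is the cokernel of its kernel, so killing the kernel suffices. For monoids this fails in general; a surjective monoid homomorphism is the coequalizer of its kernel \emph{pair}, not of the inclusion of its kernel. What you actually need is that $\sigma(m_1)=\sigma(m_2)$ forces $\sigma\psi(m_1)=\sigma\psi(m_2)$, and this uses the prefibration hypothesis: writing $m_i=\kappa(n)a_i$ with $n=\sigma(m_i)$ and $a_i\in A$ (possible precisely because $\kappa(n)$ is $\sigma$-precartesian), one gets $\sigma\psi(m_i)=\sigma(\psi(\kappa(n)))\,\sigma(\psi(a_i))=\sigma(\psi(\kappa(n)))$ since $\psi(a_i)\in A$. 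This is exactly the computation carried out in the subsequent Lemma~\ref{le.aut_ext_hom}; once you insert it here (it does not depend on the present lemma), your proof is complete.
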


  \begin{proof}
    The only point to check, is that $\psi^{-1}$ sends $\sigma$-precartesian elements to
    $\sigma$-precartesians if $\psi\in\aut_A(M)$. Take $x\in M$ and assume $\psi(x)=y$
    is $\sigma$-precartesian. One can write $x=\kappa(n)a$, where $n=\sigma(x)$ and
    $a\in A$. Then $y=\psi(\kappa(n))\theta(a)$. By assumption on $\psi$,
    $\psi(\kappa(n))$ is precartesian. Thus, Lemma \ref{le.sig_pre_inv} implies that
    $\theta(a)$ is invertible. Since $\theta$ is an automorphism, $a$ must be
    invertible. Lemma \ref{le.prod_cart} says that $x\kappa(n)a$ is
    $\sigma$-precartesian. Thus $\psi^{-1}(\Pcar(\sigma))\subset \Pcar(\sigma).$
  \end{proof} 

  \begin{Le}\label{le.aut_ext_hom}
    Let $\psi\in\aut_A(M)$ and $\theta\in\aut(A)$ be the restriction of $\psi$ on $A$.
    There exist a unique homomorphism $\eta_\psi:N\to N$ (or simply $\eta$) for which
    the diagram
    \[\xymatrix{
      0\ar[r]&A\ar[r]\ar[d]_{\theta}&M\ar[r]^{\sigma}\ar[d]^{\psi}&N\ar[r]\ar[d]^\eta&0\\
      0\ar[r]& A\ar[r] &M\ar[r]^\sigma &N\ar[r]&0
    }\]
    commutes. Moreover $\eta$ is an automorphism.
  \end{Le}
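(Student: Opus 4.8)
The plan is to construct $\eta$ directly from $\psi$ by using the fact that $\psi$ preserves $A = \ker(\sigma)$. First I would observe that since $\psi(A) = A$, the automorphism $\psi$ descends to the quotient: for $n \in N$, pick any $x \in M$ with $\sigma(x) = n$ and set $\eta(n) := \sigma(\psi(x))$. This is well-defined because if $\sigma(x) = \sigma(x')$, then... wait, we cannot immediately conclude $x' = xa$ for general monoid homomorphisms. So instead I would use the cleavage: define $\eta(n) := \sigma(\psi(\kappa(n)))$. This is manifestly well-defined as a function $N \to N$, and $\eta(1) = \sigma(\psi(\kappa(1))) = \sigma(\psi(1)) = \sigma(1) = 1$.

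Next I would verify that the right-hand square commutes, i.e. $\sigma \circ \psi = \eta \circ \sigma$. Take $x \in M$ and write $x = \kappa(\sigma(x))\xi(x)$ with $\xi(x) \in A$, using Lemma \ref{le.cleav_sec-to-ker}. Applying $\psi$ gives $\psi(x) = \psi(\kappa(\sigma(x)))\theta(\xi(x))$, and since $\theta(\xi(x)) \in A$, applying $\sigma$ yields $\sigma(\psi(x)) = \sigma(\psi(\kappa(\sigma(x)))) = \eta(\sigma(x))$, as desired. The left square commutes by definition of $\theta$ as the restriction of $\psi$, and the exactness of the rows is already recorded (it is the exact sequence of the prefibration). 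The main structural point is that $\eta$ is a \emph{homomorphism}: for $m, n \in N$, I would compute $\eta(mn) = \sigma(\psi(\kappa(mn)))$, and using $\kappa(m)\kappa(n) = \kappa(mn)\gamma_{m,n}$ with $\gamma_{m,n} \in A$, apply $\psi$ to get $\psi(\kappa(m))\psi(\kappa(n)) = \psi(\kappa(mn))\theta(\gamma_{m,n})$; applying $\sigma$ and using $\theta(\gamma_{m,n}) \in A$ gives $\eta(m)\eta(n) = \eta(mn)$. Uniqueness of $\eta$ is immediate: commutativity of the right square forces $\eta(\sigma(x)) = \sigma(\psi(x))$ for all $x$, and $\sigma$ is surjective (this follows since prefibrations are surjective — every element of $N$ has a precartesian lift), so $\eta$ is determined on all of $N$.

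Finally, to see $\eta$ is an automorphism, I would apply the same construction to $\psi^{-1}$, which lies in $\aut_A(M)$ by Lemma \ref{le.aut_subgroup} and whose restriction to $A$ is $\theta^{-1}$. This yields a homomorphism $\eta' : N \to N$ with $\sigma \circ \psi^{-1} = \eta' \circ \sigma$. Then $\eta' \circ \eta \circ \sigma = \eta' \circ \sigma \circ \psi = \sigma \circ \psi^{-1} \circ \psi = \sigma$, and since $\sigma$ is surjective, $\eta' \circ \eta = \id_N$; symmetrically $\eta \circ \eta' = \id_N$, so $\eta$ is invertible.

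I expect the only mild subtlety to be making sure $\eta$ is well-defined as a function rather than merely a relation — this is precisely why the proof should be phrased via the cleavage $\kappa$ rather than via an arbitrary lift of $n \in N$. Once that is set up, everything else is a routine diagram chase using $\sigma(A) = \{1\}$ together with the two defining identities for $\phi$ and $\gamma$; the homomorphism property is the one computation genuinely worth writing out, and it hinges on $\gamma_{m,n}$ landing in $A = \ker(\sigma)$.
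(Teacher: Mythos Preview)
Your proof is correct and proceeds along essentially the same lines as the paper's: both arguments use the cleavage decomposition $x=\kappa(\sigma(x))\xi(x)$ together with $\psi(A)\subseteq A$ to show that $\eta(n):=\sigma(\psi(\kappa(n)))$ is well-defined and makes the right square commute. The one genuine difference is in establishing that $\eta$ is bijective: the paper verifies surjectivity (from surjectivity of $\sigma$ and $\psi$) and then checks injectivity by a direct computation with $\psi^{-1}(\kappa(n))$, while you instead invoke Lemma~\ref{le.aut_subgroup} to apply the whole construction to $\psi^{-1}$ and exhibit an inverse $\eta'$. Your route is slightly more functorial and avoids the ad hoc injectivity check; the paper's is more self-contained in that it does not rely on Lemma~\ref{le.aut_subgroup}.
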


  \begin{proof}
    Let $m_1, m_2\in M$ satisfy $\sigma(m_1)=n=\sigma(m_2)$. We have to show that
    $\sigma\psi(m_1)=\sigma(\psi(m_2))$ holds as well. This will allows us to define
    $\eta(n)$ by $\sigma\psi(m_1)$.

    Since $\sigma$ is a prefibration, there are elements $a_1,a_2\in A$ such that
    $m_i=\kappa(n) a_i$, $i=1,2$. Hence
    \[\sigma \psi(m_i)=\sigma \psi(\kappa(n))\sigma\psi(a_i),\quad i=1,2.\]
    By assumption on $\psi$, we have $\psi(a_i)\in A$. Thus $\sigma\psi(a_i)=1$ and
    therefore, $\sigma\psi(m_i)=\sigma\psi(\kappa(n))$, $i=1,2$. It follows that
    $\sigma\psi(m_1)=\sigma(\psi(m_2))$. This proves the existence of $\eta$. Since
    $\sigma$ and $\psi$ are surjective, it follows that $\eta$ is also surjective. For
    injectivity, assume $\eta(n_1)=n=\eta(n_2)$. We have $\psi(\kappa(n_i))=
    \kappa(n)a_i$, $i=1,2$ where $a_i\in A$. By assumption on $\psi$, we have
    $a_i=\theta (b_i)$ for $b_i\in A$, $i=1,2$. This gives us $\psi(\kappa(n_i))=\kappa
    (n)\psi(b_i).$ Since $\psi$ is an isomorphism, it follows that
    $\kappa(n_i)=\psi^{-1}(\kappa (n))b_i$, $i=1,2$. Apply $\sigma$ to get
    $n_i=\sigma(\psi^{-1}(\kappa(n))).$ Hence $n_1=n_2$ and injectivity of $\eta$
    follows.
  \end{proof}

  \begin{Le}\label{le.map_to_invs}
    Let $\psi\in\aut_A(M)$. There exist a unique map $\xi:N\to A^\sx$ such that
    \[\psi(\kappa(n))=\kappa(\eta(n))\xi(n).\]
    Moreover the following identities hold
    \begin{itemize}
      \item[i)] $\xi(1)=1$
      \item[ii)] $\phi_{\eta(n)}(\theta(a))\xi(n)=\xi(n)\theta(\phi_n(a))$
      \item[iii)] $\gamma_{\eta(m),\eta(n)}\phi_{\eta(n)}(\xi(m))\xi(n)=
        \xi(mn)\theta(\gamma_{m,n})$
    \end{itemize}
  \end{Le}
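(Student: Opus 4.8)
The plan is to exploit the fact that $\psi(\kappa(n))$ lies in $\sigma^{-1}(\eta(n))$ and is $\sigma$-precartesian, together with the defining property of the cleavage $\kappa$. Concretely, since $\psi$ is cartesian we know $\psi(\kappa(n))\in\Pcar(\sigma)$, and applying $\sigma$ to the commuting square of Lemma \ref{le.aut_ext_hom} gives $\sigma(\psi(\kappa(n)))=\eta(\sigma(\kappa(n)))=\eta(n)$. Hence $\psi(\kappa(n))$ and $\kappa(\eta(n))$ are both $\sigma$-precartesian elements above $\eta(n)$, so Lemma \ref{le.sig_pre_inv} (or, since $\kappa(\eta(n))$ is precartesian, directly the defining property together with invertibility from that lemma) yields a unique $\xi(n)\in A$ with $\psi(\kappa(n))=\kappa(\eta(n))\xi(n)$, and $\xi(n)\in A^\sx$. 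This establishes existence and uniqueness of $\xi$.

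For i), evaluate at $n=1$: since $\kappa(1)=1$, $\eta(1)=1$ and $\psi(1)=1$, the equation reads $1=\kappa(1)\xi(1)=\xi(1)$, so $\xi(1)=1$. For ii) and iii) I would push the two structural identities defining $\phi$ and $\gamma$ — namely $a\kappa(n)=\kappa(n)\phi_n(a)$ and $\kappa(m)\kappa(n)=\kappa(mn)\gamma_{m,n}$ — through $\psi$ and rewrite using the substitution $\psi(\kappa(n))=\kappa(\eta(n))\xi(n)$ and $\psi|_A=\theta$. For ii): apply $\psi$ to $a\kappa(n)=\kappa(n)\phi_n(a)$ to get $\theta(a)\kappa(\eta(n))\xi(n)=\kappa(\eta(n))\xi(n)\theta(\phi_n(a))$; on the left, use the $\phi$-identity for $\eta(n)$, namely $\theta(a)\kappa(\eta(n))=\kappa(\eta(n))\phi_{\eta(n)}(\theta(a))$, to obtain $\kappa(\eta(n))\phi_{\eta(n)}(\theta(a))\xi(n)=\kappa(\eta(n))\xi(n)\theta(\phi_n(a))$, and then cancel $\kappa(\eta(n))$ via the weak cancellation property (all factors lie in $A=\ker\sigma$). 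For iii): apply $\psi$ to $\kappa(m)\kappa(n)=\kappa(mn)\gamma_{m,n}$, getting $\kappa(\eta(m))\xi(m)\kappa(\eta(n))\xi(n)=\kappa(\eta(mn))\xi(mn)\theta(\gamma_{m,n})$; note $\eta(mn)=\eta(m)\eta(n)$ since $\eta$ is a homomorphism; on the left move $\xi(m)$ past $\kappa(\eta(n))$ using $\xi(m)\kappa(\eta(n))=\kappa(\eta(n))\phi_{\eta(n)}(\xi(m))$, then combine $\kappa(\eta(m))\kappa(\eta(n))=\kappa(\eta(m)\eta(n))\gamma_{\eta(m),\eta(n)}$, so the left side becomes $\kappa(\eta(mn))\gamma_{\eta(m),\eta(n)}\phi_{\eta(n)}(\xi(m))\xi(n)$; cancelling $\kappa(\eta(mn))$ by weak cancellation gives iii).

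The only mild subtlety — and the step I would flag as the main thing to get right — is the repeated appeal to the weak cancellation property: it applies because $\kappa(\eta(n))$ (resp. $\kappa(\eta(mn))$) is $\sigma$-precartesian and the remaining factors on both sides of each equation genuinely lie in $\ker(\sigma)$. For ii) this holds since $\phi_{\eta(n)}(\theta(a))\xi(n)$ and $\xi(n)\theta(\phi_n(a))$ are products of elements of $A$; for iii) similarly $\gamma_{\eta(m),\eta(n)}\phi_{\eta(n)}(\xi(m))\xi(n)$ and $\xi(mn)\theta(\gamma_{m,n})$ lie in $A$ because $\gamma$, $\phi_{\eta(n)}$ restricted to $A$, $\xi$ and $\theta$ all take values in $A$. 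Once this is noted the calculations are entirely formal. I would also remark at the outset that $\eta$, $\theta$ here are those supplied by Lemmas \ref{le.aut_ext_hom} and the definition of $\aut_A(M)$, so no new choices are needed.
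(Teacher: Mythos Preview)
Your proof is correct and follows essentially the same route as the paper: establish $\xi$ via the precartesian property of $\kappa(\eta(n))$ and deduce invertibility from Lemma~\ref{le.sig_pre_inv}, then push the defining identities (\ref{eq.kappa_def}) and (\ref{eq.gam_def}) through $\psi$ and cancel using the weak cancellation property. If anything, your write-up is slightly more careful than the paper's in explicitly checking that the factors being cancelled lie in $\ker(\sigma)$.
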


  \begin{proof}
    Since $\sigma\psi\kappa(n)=\sigma\kappa\eta(n)$, the function $\xi:N\to A$ exists
    and is unique. Since $\psi$ is cartesian, $\xi(n)$ is invertible. Thus $\xi:N\to
    A^\sx$.

    i) The first equality follows from the fact that $\kappa(1)=1$.

    ii) For the second equality, apply $\psi$ to Equality (\ref{eq.kappa_def}) on page
    \pageref{eq.kappa_def} to get
    \[\psi(a\kappa(n))=\psi(\kappa(n)\psi_n(a)).\]
    This can be rewritten as
    \[\theta(a)\kappa(\eta(n))\xi(n)=\kappa(\eta(n))\xi(n)\theta(\phi_n(a)).\]
    Since
    \[\theta(a)\kappa(\eta(n))=\kappa(\eta(n))\phi_{\eta(n)}\theta(n),\]
    Equality ii) follows.

    iii) For the third equality, apply $\psi$ to Equality (\ref{eq.gam_def}) on page
    \pageref{eq.gam_def} to obtain
    \[\psi(\kappa(m))\psi(\kappa(n))=\psi(\kappa(mn))\theta(\gamma_{m,n}).\]
    From this, we can obtain
    \[\kappa(\eta(m))\xi(m)\kappa(\eta(n))\xi(n)=
      \kappa(\eta(mn))\xi(mn)\theta(\gamma_{m,n}).\]
    Observe that $\xi(m)\kappa(\eta(n))=\kappa(\eta(n))\phi_{\eta(n)}(\xi(n))$. Hence
    \[\kappa(\eta(m))\kappa(\eta(n))\phi_{\eta(n)}(\xi(n))\xi(n)=
      \kappa(\eta(mn))\xi(mn)\theta(\gamma_{m,n}).\]
    We use the fact that
    \[\kappa(\eta(m))\kappa(\eta(n))=\kappa(\eta(mn))\gamma_{\eta(m),\eta(n)}\]
    to obtain this last part.
  \end{proof}

  \begin{Le}\label{le.aut3}
    Assume there are given automorphisms $\eta\in\aut(N)$ and $\theta\in\aut(A)$. Let
    $\xi:N\to A^\sx$ be a function satisfying Relations i) - iii) from Lemma
    \ref{le.map_to_invs}. There exist a unique element $\psi\in\aut_A(M)$ such that
    $\theta=\theta_\psi$, $\eta=\eta_\psi$. Moreover
    \begin{equation}\label{eq.aut3}
      \psi(\kappa(n)a)=\kappa ({\eta(n)})\xi(n)\theta(a)
    \end{equation}
    holds for any $a\in A$, $n\in N$.
  \end{Le}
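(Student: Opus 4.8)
The plan is to \emph{define} $\psi$ directly by formula \eqref{eq.aut3} and then check that this formula produces an element of $\aut_A(M)$ with the required properties. This makes uniqueness essentially automatic: by Lemma \ref{le.cleav_sec-to-ker} every $m\in M$ has a unique decomposition $m=\kappa(\sigma(m))\,c$ with $c\in A$, so the assignment $\kappa(n)c\mapsto\kappa(\eta(n))\xi(n)\theta(c)$ is well defined, and any $\psi$ restricting to $\theta$ on $A$ with $\psi(\kappa(n))=\kappa(\eta(n))\xi(n)$ is forced to be this map (note also that the $\psi$ so defined satisfies $\psi(\kappa(n))=\kappa(\eta(n))\xi(n)\theta(1)=\kappa(\eta(n))\xi(n)$, so the function it induces via Lemma \ref{le.map_to_invs} is the given $\xi$). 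The real content is therefore that this $\psi$ is a monoid automorphism lying in $\aut_A(M)$ and inducing $\theta$ and $\eta$.

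First I would verify that $\psi$ is a homomorphism. Writing $m_i=\kappa(n_i)a_i$ and using $\kappa(n_1)\kappa(n_2)=\kappa(n_1n_2)\gamma_{n_1,n_2}$ together with the commutation rule \eqref{eq.kappa_def}, one gets $m_1m_2=\kappa(n_1n_2)\gamma_{n_1,n_2}\phi_{n_2}(a_1)a_2$, hence
\[
\psi(m_1m_2)=\kappa(\eta(n_1)\eta(n_2))\,\xi(n_1n_2)\,\theta(\gamma_{n_1,n_2})\,\theta(\phi_{n_2}(a_1))\,\theta(a_2).
\]
On the other side, expanding $\psi(m_1)\psi(m_2)=\kappa(\eta(n_1))\xi(n_1)\theta(a_1)\kappa(\eta(n_2))\xi(n_2)\theta(a_2)$, pushing $\theta(a_1)$ and then $\xi(n_1)$ past $\kappa(\eta(n_2))$ via \eqref{eq.kappa_def}, and replacing $\kappa(\eta(n_1))\kappa(\eta(n_2))$ by $\kappa(\eta(n_1)\eta(n_2))\gamma_{\eta(n_1),\eta(n_2)}$, brings the $A$-part to $\gamma_{\eta(n_1),\eta(n_2)}\phi_{\eta(n_2)}(\xi(n_1))\phi_{\eta(n_2)}(\theta(a_1))\xi(n_2)\theta(a_2)$. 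Now Relation ii) turns $\phi_{\eta(n_2)}(\theta(a_1))\xi(n_2)$ into $\xi(n_2)\theta(\phi_{n_2}(a_1))$, and Relation iii) turns $\gamma_{\eta(n_1),\eta(n_2)}\phi_{\eta(n_2)}(\xi(n_1))\xi(n_2)$ into $\xi(n_1n_2)\theta(\gamma_{n_1,n_2})$; comparing with the display above finishes the computation. Unitality $\psi(1)=1$ uses $\eta(1)=1$, $\theta(1)=1$ and Relation i).

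Next I would show $\psi$ is bijective and cartesian. For injectivity, if $\psi(\kappa(n)a)=\psi(\kappa(n')a')$ then applying $\sigma$ gives $\eta(n)=\eta(n')$, so $n=n'$; then the weak cancellation property of the precartesian element $\kappa(\eta(n))$ together with the invertibility of $\xi(n)$ and $\theta(a)$ forces $a=a'$. For surjectivity, given $\kappa(n')a'$ take $n=\eta^{-1}(n')$ and $a=\theta^{-1}(\xi(n)^{-1}a')$, so that $\psi(\kappa(n)a)=\kappa(n')a'$. Since $\psi$ restricts to $\theta\in\aut(A)$ on $A=\ker(\sigma)$ we get $\psi(A)=A$ and $\theta_\psi=\theta$, while $\sigma\psi(\kappa(n)a)=\eta(n)=\eta(\sigma(\kappa(n)a))$ shows $\eta$ makes the square of Lemma \ref{le.aut_ext_hom} commute, whence $\eta_\psi=\eta$. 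For cartesianness, recall that a $\sigma$-precartesian element is exactly one of the form $\kappa(n)b$ with $b\in A^\sx$: one direction is Lemma \ref{le.sig_pre_inv} applied to the two precartesian elements $\kappa(n)b$ and $\kappa(n)$, the other is the elementary fact (argued as in the surjective-prefibration lemmas of Section \ref{sec.Groth_th}) that right-multiplying a precartesian element by an invertible one keeps it precartesian. Since $\psi(\kappa(n)b)=\kappa(\eta(n))\bigl(\xi(n)\theta(b)\bigr)$ with $\xi(n)\theta(b)\in A^\sx$, we conclude $\psi(\Pcar(\sigma))\subseteq\Pcar(\sigma)$, so $\psi\in\aut_A(M)$.

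The main obstacle is the homomorphism check: everything hinges on feeding the $A$-part of $\psi(m_1)\psi(m_2)$ through Relations ii) and iii) in the correct order, since iii) only becomes applicable after ii) has been used to move $\xi(n_2)$ into the position $\gamma_{\eta(n_1),\eta(n_2)}\phi_{\eta(n_2)}(\xi(n_1))\xi(n_2)$. A secondary but genuine point of care is confirming that ``$\kappa(n)b$ with $b$ invertible'' really characterises $\Pcar(\sigma)$, which is what makes the verification that $\psi$ is cartesian — and hence lies in $\aut_A(M)$ — go through cleanly.
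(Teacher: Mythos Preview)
Your proof is correct and follows the same route as the paper's: define $\psi$ by the formula, verify the homomorphism property via Relations ii) and iii) in exactly the order you indicate, then check bijectivity and cartesianness. One small wording slip: in the injectivity step you should say ``invertibility of $\xi(n)$ and bijectivity of $\theta$'' rather than ``invertibility of $\theta(a)$'', since $\theta(a)$ need not be a unit; otherwise your argument is, if anything, more careful than the paper's (you give a full characterisation of $\Pcar(\sigma)$ to finish the cartesian check, and your surjectivity preimage $a=\theta^{-1}(\xi(n)^{-1}a')$ has $\xi(n)^{-1}$ on the correct side).
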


  \begin{proof}
    Any element of $M$ can be uniquely expressed as the product $\kappa(n)a$. This
    immediately implies the uniqueness of $\psi$. Subsequently, (\ref{eq.aut3}) gives a
    well defined map $\psi:M\to M$ whose restriction on $A$ is $\theta$ and $\eta\sigma=
    \sigma\psi$. It remains to show that $\psi$ is an automorphism and cartesian. We
    have
    \begin{align*}
      \psi(\kappa(m)a\kappa(n)b) &=\psi(\kappa(m)\kappa(n)\phi_n(a)b) \\
                                 &=\psi(\kappa(mn)\gamma_{m,n}\phi_n(a)b) \\
                                 &=\kappa_{\eta(mn)}\xi(mn)\theta(\gamma_{m,n})
                                   \theta(\phi_n(a)) \theta(b) \\
                                 &=\kappa_{\eta(mn)}\gamma_{\eta(m),\eta(n)}\phi_{\eta(n)}
                                   (\xi(m))\xi(n)\theta(\phi_n(a)) \theta(b) \\
                                 &=\kappa_{\eta(mn)}\gamma_{\eta(m),\eta(n)}\phi_{\eta(n)}
                                   (\xi(m))\phi_{\eta(n)}\theta(a)\xi(n)\theta(b) \\
                                 &=\kappa(\eta(m))\kappa(\eta(n))\phi_{\eta(n)}
                                   (\xi(m)\theta(a))\xi(n)\theta(b) \\
                                 &=\kappa(\eta(m))\xi(m)\theta(a)\kappa(\eta(n))
                                   \xi(n)\theta(b)=\psi(\kappa(m)a)\psi(\kappa(n)b).
    \end{align*}
    We have shown that $\psi$ is a homomorphism. To see the bijectivity of $\psi$,
    assume $\psi(\kappa(n)a)=\psi(\kappa(n')a')$. Then
    \[\kappa(\eta(n))\xi(n)\theta(a)=\kappa(\eta(n'))\xi(n)\theta(a).\]
    Apply $\sigma$ and use the fact that $\xi(n)\theta(a),\xi(n')\theta(a')\in A$ to
    obtain $n=n'$. Since $\kappa(n)$ is $\sigma$-precartesian, it follows that
    $\xi(n)\theta(a)=\xi(n)\theta(a')\in A$. We see that $\theta(a)=\theta(a')$ as
    $\xi(n)$ is invertible. It follows now that $a=a'$ as $\theta$ is an automorphism
    and the injectivity of $\psi$ is proved.

    For surjectivity, take any element $\kappa(m)b\in M$. Define $n=\eta^{-1}(m)$ and
    $a=\theta^{-1}(b\xi(n)^{-1})$. Then $\psi(\kappa(n)a)=\kappa(m)b$. It remains to
    show that $\psi$ is cartesian. By definition, we have the equality
    \[\psi(\kappa(n))=\kappa(\eta(n))\xi(n)\]
    and $\kappa(\eta(n))$ is $\sigma$-precartesian, $\xi(n)$ must be invertible.
    We have shown that $\psi(\kappa(n))$ is $\sigma$-precartesian.
  \end{proof}

  According to Lemmas \ref{le.aut_subgroup} and \ref{le.aut_ext_hom}, the assignment
  $\psi\mapsto(\theta_\psi,\eta_\psi)$ defines the group homomorphism
  \[\aut_A(M)\to\aut(A)\times\aut(N).\]
  It will be the aim of the rest of this section to investigate this homomorphism. We
  will need some additional notations to state our results.
  \newline

  \noindent
  Denote by $C$ the following subset of $\aut(A)\times\aut(N)$: A pair $(\theta,\eta)$
  belongs to $C$ if and only if there exists a map $\alpha:N\to A^\sx$ such that
  \[\phi_{\eta(n)}\theta(a)\alpha(n)=\alpha(n)\theta(\phi_n(a))\] for any $a\in A$ and
  $n\in N$. According to Identity ii) of Lemma \ref{le.map_to_invs}, the pair
  $(\theta_\psi,\eta_\psi)$ belongs to $C$ for any $\psi\in\aut_A(M)$. We thusly have a
  map
  \begin{equation}\label{eq.rho}
    \rho:\aut_A(M)\to C\subset\aut(A)\times\aut(N),
  \end{equation}
  where $\rho(\psi)=(\theta_\psi,\eta_\psi)$.

  \begin{Le}
    The above constructed subset $C$ is a subgroup of $\aut(A)\times\aut(N)$.
  \end{Le}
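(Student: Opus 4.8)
The plan is to verify directly that $C$ is closed under the group operations of $\aut(A)\times\aut(N)$, namely that it contains the identity, is closed under products, and is closed under inverses. Throughout I would use the fixed lax action $(A,\phi,\gamma)$ of $N$ on $A$ and the defining property: $(\theta,\eta)\in C$ iff there is a map $\alpha\colon N\to A^\sx$ with $\phi_{\eta(n)}\theta(a)\,\alpha(n)=\alpha(n)\,\theta(\phi_n(a))$ for all $a\in A$, $n\in N$.

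First I would dispose of the identity: taking $\theta=\id_A$, $\eta=\id_N$, the required identity becomes $\phi_n(a)\alpha(n)=\alpha(n)\phi_n(a)$, which holds with $\alpha(n)=1$ for all $n$ (noting $1\in A^\sx$). Hence $(\id_A,\id_N)\in C$. Next, for closure under products, suppose $(\theta,\eta)$ and $(\theta',\eta')$ lie in $C$, witnessed by $\alpha$ and $\alpha'$ respectively. I would claim the pair $(\theta'\theta,\eta'\eta)$ is witnessed by the map $n\mapsto \alpha'(n)\,\theta'(\alpha(n))$, which indeed takes values in $A^\sx$ since $\theta'$ is an automorphism (so $\theta'(\alpha(n))\in A^\sx$) and $A^\sx$ is a subgroup of $A$. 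The verification is a short computation: starting from $\phi_{\eta'\eta(n)}\theta'\theta(a)$, apply the witness identity for $(\theta',\eta')$ to the element $\theta(a)$ (with parameter $\eta(n)$) to move $\alpha'(n)$ past it, obtaining $\alpha'(n)\,\theta'\!\big(\phi_{\eta(n)}\theta(a)\big)$; then apply $\theta'$ to the witness identity for $(\theta,\eta)$ and substitute, which introduces $\theta'(\alpha(n))$ and yields $\alpha'(n)\theta'(\alpha(n))\,\theta'\theta(\phi_n(a))$. Reading this off gives exactly the defining relation for $(\theta'\theta,\eta'\eta)$ with the claimed witness.

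Finally, for inverses, suppose $(\theta,\eta)\in C$ with witness $\alpha$. I would show $(\theta^{-1},\eta^{-1})\in C$ with witness $n\mapsto \theta^{-1}\!\big(\alpha(\eta^{-1}(n))\big)^{-1}$, which again lies in $A^\sx$ since $\alpha(\eta^{-1}(n))\in A^\sx$, $\theta^{-1}$ is an automorphism, and $A^\sx$ is closed under inverses. Concretely, in the witness identity for $(\theta,\eta)$ replace $a$ by $\theta^{-1}(b)$ and $n$ by $\eta^{-1}(n)$ to get $\phi_n(b)\,\alpha(\eta^{-1}(n))=\alpha(\eta^{-1}(n))\,\theta\big(\phi_{\eta^{-1}(n)}\theta^{-1}(b)\big)$; applying $\theta^{-1}$ and rearranging (using that $\theta^{-1}$ is a monoid homomorphism, hence preserves the products in $A$) isolates $\phi_{\eta^{-1}(n)}\theta^{-1}(b)$ on one side against $\theta^{-1}(\phi_n(b))$ on the other, conjugated by $\theta^{-1}(\alpha(\eta^{-1}(n)))^{\pm1}$, which is precisely the defining relation for $(\theta^{-1},\eta^{-1})$.

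The only mild subtlety, and the step I would be most careful with, is bookkeeping the order of multiplication: since $A$ need not be commutative and $\gamma$ never actually enters here, the argument is purely about the $\phi$ part, but one must consistently apply the witness relations on the correct side and to the correct elements, and repeatedly use that automorphisms of $A$ restrict to automorphisms of $A^\sx$ and are multiplicative. Once the three witnesses above are written down, each verification is a two- or three-line substitution, so I would simply present the witness maps and the chain of equalities, leaving no genuine obstacle.
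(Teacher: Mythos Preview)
Your approach matches the paper's: both verify the subgroup axioms directly by exhibiting explicit witness maps, and the two-step substitution you describe for closure under products is exactly what the paper carries out. Your treatment of the identity and of inverses is correct and in fact more thorough than the paper, which only writes out the product case.

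There is one bookkeeping slip in your product witness. When you apply the witness identity for $(\theta',\eta')$ ``with parameter $\eta(n)$'', the element that appears on both sides is $\alpha'(\eta(n))$, not $\alpha'(n)$. Hence the correct witness for $(\theta'\theta,\eta'\eta)$ is
\[
  n\ \longmapsto\ \alpha'(\eta(n))\,\theta'(\alpha(n)),
\]
which is precisely the paper's formula (in its notation, the witness for $(\theta\vartheta,\eta\zeta)$ is $n\mapsto \alpha(\zeta(n))\,\theta(\beta(n))$). Your own narration already feeds in $\eta(n)$ as the parameter, so the computation you outline is right; only the displayed witness needs the extra $\eta$ inserted. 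With that correction the argument goes through unchanged.
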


  \begin{proof}
    Let $(\theta, \eta)$ and $(\vartheta,\zeta)$ be elements in $C$. There are maps
    $\alpha,\beta:N\to A^\sx$ such that
    \[\phi_{\eta(n)}\theta(a)\alpha(n)=\alpha(n)\theta(\phi_n(a))\]
    and
    \[\phi_{\zeta(n)}\vartheta(a)\beta(n)=\beta(n)\vartheta(\phi_n(a)).\]
    We can deduce
    \begin{align*}
      \phi_{\eta\zeta(n)}\theta(\vartheta(a))\alpha(\zeta(n))\theta(\beta(n))
        &=\alpha(\zeta(n))\theta(\phi_{\zeta(n)}(\vartheta(a))\theta(\beta(n)) \\
        &=\alpha(\zeta(n))\theta(\beta(n))\theta(\vartheta\phi_n(a)).
    \end{align*}
    Since $\alpha(\zeta(n))\theta(\beta(n))\in A^\sx$, we see that $(\theta\circ
    \vartheta, \eta\circ\zeta)\in C$.
  \end{proof}

\section{The case when $A$ is commutative}\label{sec.commutative}

  Throughout this section, we assume that there is given a pseudo action of a monoid $M$
  on a commutative monoid $A$. Condition ii) of Definition \ref{de.lax_act} in this case
  simplifies to
  \[\phi_n(\phi_m(a))=\phi_{mn}(a).\]
  Subsequently, $\phi$ becomes a (real) action as the map
  \[M\to\End(A),\quad m\mapsto\phi_m\]
  is now a monoid homomorphism. The second function $\gamma:M\times M\to A$ also
  simplifies and becomes a type of $2$-cocycle condition as we have
  \[\gamma_{mn,k}\phi_k(\gamma_{m,n})=\gamma_{m,nk}\gamma_{n,k}.\]

  It is known \cite{manuell}, but perhaps not well-known, that monoid prefibrations are
  essentially the same as Schreier extensions. This can be seen by simply comparing the
  definitions \cite{redei, I, Str, schP1, schP2, APachII}. Our emphasis will be more on
  fibrations, which correspond to regular Schreier extensions \cite{PaForumMath}.

  For completeness sake, let us also recall the following definitions (see
  \cite[Definitions 3.1 and 3.7]{PaForumMath} and \cite[Definition 4.6]{APachII}).

  \begin{De}
    Let $A, N$ be monoids with $A$ additionally commutative and assume there is given an
    action of $N$ on $A$, $(a,n)\mapsto an$.
    A sequence
    \[\xymatrix{E: 1\ar[r] &A \ar[r]^\iota & M \ar[r]^\sigma & N\ar[r]&1}\]
    of monoids and monoid homomorphisms is called a Schreier extension of $N$ by
    $A$ if the following conditions hold:
    \begin{enumerate}
      \item $\sigma\iota(a)=1$ for all $a\in A$,
      \item $\sigma$ is a prefibration,
      \item $\iota$ induces an isomorphism $A\to \ker(\sigma)$ of $N$-modules,
      \item $\iota(a)x=x\iota(\phi_{\sigma(x)}(a))$ for all $a\in A$ and $x\in M.$
    \end{enumerate}
    If $\sigma$ is a fibration, the extension is called a regular Schreier extension.
	\end{De}

  Such extensions form a category $\MonExt$. Objects are Schreier extensions, while
  morphisms are commutative diagrams
  \[\xymatrix{E: 1\ar[r] & A\ar[r]^\iota\ar[d]_{\alpha} &
                M\ar[r]^{\sigma}\ar[d]^{\beta} & N\ar[r]\ar[d]^{\mu} & 1 \\
              E':	1\ar[r]&  A\ar[r]_{\iota} & M'\ar[r]^{\sigma'} &N'\ar[r] & 1.}\]
  Here, $\alpha$, $\beta$, $\mu$ are monoid homomorphisms and $\beta$ is cartesian.
  One additionally requires that
  \[\alpha(ax)=\alpha(a)\mu(x)\]
  for any $a\in A$ and $x\in N$. By cartesian we mean that
  \[\beta(\Pcar(\sigma))\subseteq\Pcar(\sigma').\]
  We say that $(\alpha,\beta, \mu): (E)\to (E')$ is a morphism of extensions in this
  case.

  Two Schreier extensions $(E)$ and $(E')$ are called \emph{congruent} if there exists a
  morphism
  \[(\id_A,\beta,\id_M)(E)\to (E').\]
  Denote the congruence classes of such extensions by $\monext(M,A)$. The subclass
  formed by regular Schreier extensions is denoted by $\regmonext(M,A)$.
  \newline

  \noindent
  Denote by $\cyc(N,A)$ the collection of all $2$-cocycles. That is to say, the
  collection of all maps $\gamma:N\times N\to A$ satisfying the conditions 
  \begin{enumerate}
    \item[i)] $\gamma_{mn,k}\phi_k(\gamma_{m,n})=\gamma_{m,nk}\gamma_{n,k}$,
    \item[ii)] $\gamma_{x,1}=1=f_{1,x}$,
  \end{enumerate}
  for all $x,y,z\in N$.

  A 2-cocycle $\gamma$ is called regular if $\gamma(x,y)\in A^\sx$ for all $x,y\in N$.
  \newline

  \noindent
  Let $\gamma$ and $\gamma'$ be two 2-cocycles. We write $\gamma\sim \gamma'$ if there
  exist a function $\tau\colon N\to A^\sx$ with $\tau(1)=0$ such that
  \begin{equation}\label{eq.coc}
    \gamma_{m,n}\tau(mn)=\gamma'_{m,n}\tau(n)\phi_n(\tau(m))
  \end{equation}
  for all $x,y\in N$. We sometimes also write $\tau:\gamma\sim \gamma'$ to specify the
  role of the function $\tau$.
  \newline

  \noindent
  Denote the equivalence classes of 2-cocycles by $H^2(N,A)$. The subclass formed by
  regular 2-cocycles is denoted by $H^2(N,A^\sx)$.
  \newline

  Now we are in position to prove the following well-known fact (see \cite{APachII} for
  part i)).

  \begin{Th}
    There are canonical bijections
    \[H^2(N,A)\to \monext(N,A),\]
    \[H^2(N,A^\sx)\to \regmonext(N,A).\]
  \end{Th}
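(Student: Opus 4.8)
The plan is to construct the maps directly and check that they are mutually inverse, exploiting the machinery already developed for the Grothendieck construction. First I would define the forward map $H^2(N,A)\to\monext(N,A)$: given a $2$-cocycle $\gamma\colon N\times N\to A$, the action of $N$ on $A$ together with $\gamma$ constitutes a lax action $(A,\phi,\gamma)$ in the sense of Definition \ref{de.lax_act} — indeed, commutativity of $A$ collapses condition ii) to $\phi_n\phi_m=\phi_{mn}$, which holds since $\phi$ is a genuine action, condition iii) is exactly the $2$-cocycle identity, and conditions iv), v) are the normalisation and homomorphism axioms. Applying Lemma \ref{le.Groth_const} produces the monoid $\Groth(A,\phi,\gamma)$, and Lemma \ref{le.lax_surj} shows $1\to A\to\Groth(A,\phi,\gamma)\xrightarrow{\sigma} N\to 1$ is a short exact sequence with $\sigma$ a prefibration; the splitting $\kappa(n)=(n,1)$ and the identity $(1,a)(n,1)=(n,\phi_n(a))$ verify the Schreier conditions (1)--(4). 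When $\gamma$ is regular, Corollary \ref{cor.ps_fib} upgrades $\sigma$ to a fibration, giving the second map into $\regmonext(N,A)$.

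Next I would define the inverse $\monext(N,A)\to H^2(N,A)$: given a Schreier extension $E$, choose a cleavage $\kappa$ of $\sigma$ (possible by the axiom of choice) and form the pair $(\phi,\gamma)$ via equations (\ref{eq.kappa_def}) and (\ref{eq.gam_def}). By Proposition \ref{pro.pref_lax_act} this is a lax action of $N$ on $A$; the first component $\phi$ recovers the given $N$-action because axiom (4) of the Schreier extension says precisely $\iota(a)x=x\iota(\phi_{\sigma(x)}(a))$, so condition (\ref{eq.kappa_def}) forces $\phi$ to agree with the prescribed action. Thus $\gamma$ is a $2$-cocycle, and Lemma \ref{le.cob} shows that a different choice of cleavage $\tilde\kappa=\kappa\cdot\eta$ replaces $\gamma$ by $\tilde\gamma$ with $\tilde\gamma_{m,n}=\eta(mn)^{-1}\gamma_{m,n}\phi_n(\eta(m))\eta(n)$ — which, since $A$ is commutative, is exactly the relation (\ref{eq.coc}) defining $\gamma\sim\tilde\gamma$. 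Hence the class $[\gamma]\in H^2(N,A)$ is independent of the cleavage. One must also check that congruent extensions yield the same class: a congruence $(\id_A,\beta,\id_N)\colon E\to E'$ transports a cleavage $\kappa$ of $\sigma$ to the cleavage $\beta\kappa$ of $\sigma'$ (using that $\beta$ is cartesian, so $\beta\kappa(n)$ is precartesian), and Proposition \ref{pro.clev_to_lax} with $\tau\equiv 1$ shows the resulting cocycle is literally equal, not merely cohomologous.

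Then I would verify the two composites are identities. Starting from $[\gamma]$, building $\Groth(A,\phi,\gamma)$, and computing the cocycle of the canonical cleavage $\kappa(n)=(n,1)$ gives back $\gamma$ on the nose, since $\kappa(m)\kappa(n)=(m,1)(n,1)=(mn,\gamma_{m,n})=\kappa(mn)(1,\gamma_{m,n})$, so the induced cocycle is $\gamma$ itself; hence $H^2(N,A)\to\monext(N,A)\to H^2(N,A)$ is the identity. Conversely, starting from an extension $E$, choosing a cleavage $\kappa$, forming $(A,\phi,\gamma)$, and applying Lemma \ref{le.Groth_M_iso} produces an isomorphism $\Groth(A,\phi,\gamma)\to M$, $(n,a)\mapsto\kappa(n)a$, which is the identity on $A$ and on $N$; this exhibits a congruence between the reconstructed extension and $E$, so $\monext(N,A)\to H^2(N,A)\to\monext(N,A)$ is the identity as well. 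Restricting the whole argument to regular cocycles and regular (i.e.\ fibration) extensions — using Corollary \ref{cor.ps_fib} in one direction and Lemma \ref{le.sig_car=pcar} together with the fibration clause of Proposition \ref{pro.pref_lax_act} in the other — yields the second bijection $H^2(N,A^\sx)\to\regmonext(N,A)$ and also confirms that the congruence isomorphism of Lemma \ref{le.Groth_M_iso} is cartesian, which is needed for it to count as a morphism of extensions.

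The main obstacle, and the step requiring the most care, is the bookkeeping around \emph{well-definedness on congruence classes} in both directions simultaneously: one has to track how changing cleavages interacts with congruence isomorphisms, and confirm that the normalisation conventions ($\kappa(1)=1$, $\tau(1)=1$, $\gamma_{1,n}=\gamma_{n,1}=1$) are preserved throughout so that the equivalence relation (\ref{eq.coc}) on cocycles matches congruence of extensions exactly — not just up to a further coboundary. Everything else is a direct translation through the already-established lemmas, with commutativity of $A$ doing the essential simplifying work of turning the lax-action axioms into the classical $2$-cocycle and coboundary conditions.
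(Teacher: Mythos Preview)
Your proposal is correct and follows essentially the same route as the paper: both directions rest on the Grothendieck construction (Lemmas \ref{le.Groth_const}, \ref{le.lax_surj}, Corollary \ref{cor.ps_fib}) and the cleavage-to-lax-action passage (Proposition \ref{pro.pref_lax_act}, Lemma \ref{le.Groth_M_iso}, Proposition \ref{pro.clev_to_lax}). The only cosmetic difference is organisational: the paper presents a single map $\cyc(N,A)\to\monext(N,A)$ and argues surjectivity and then injectivity on $H^2$ (invoking Lemma \ref{le.lax} for well-definedness), whereas you explicitly build the inverse and verify both composites, citing Lemma \ref{le.cob} for cleavage-independence---but the content is the same.
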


  \begin{proof}
    Since $A$ is an $N$-module, we have an endomorphism $\phi:N\to\End(A)$, given by
    $\phi_n(a)=an$. Let $\gamma\in\cyc(N,A)$ be a cocycle. The pair $(\phi,\gamma)$
    defines a lax action of $N$ on $A$. We can consider the Grothendieck construction as
    it is defined in Lemma \ref{le.Groth_const}. Thanks to Lemma \ref{le.lax_surj} and
    part ii) of Lemma \ref{le.lax_surj}, we obtain a Schreier extension. We have defined
    a map $\cyc(N,A)\to\monext(N,A)$, which is surjective thanks to Proposition
    \ref{pro.pref_lax_act} and Lemma \ref{le.Groth_M_iso}.

    If $\gamma\sim\gamma'$ and $\tau$ satisfies Equality in \ref{eq.coc}, the pair
    $(\id_A,\tau)$ defines a pseudo $N$-homomorphism $(A,\phi,\gamma)\to
    (A,\phi,\gamma')$. Hence, Lemma \ref{le.lax} implies that it induces a congruence
    from their corresponding regular Schreier extensions. The surjection
    $\cyc(N,A)\to\monext(N,A)$ thus factors through the homomorphism ${H}^2(N,A)\to
    \monext(N,A)$. Moreover, if $\gamma$ and $\gamma'$ are two $2$-cocycles such that
    the corresponding extensions are similar, Proposition \ref{pro.clev_to_lax} implies
    that $\gamma\sim\gamma'$.

    Let $\gamma\in\cyc(N,A^\sx)$. The pair $(\phi,\gamma)$ defines a pseudo action. By
    Corollary \ref{cor.ps_fib}, the corresponding Schreier extension is regular and
    therefore, the result follows.
  \end{proof}

\subsection{On automorphism groups of extensions}

  We will still consider fibrations $\sigma:M\to N$ in this section for which
  $A=\ker(\sigma)$ is commutative. We can thus assume that
  \[\xymatrix{E: 1\ar[r] &A \ar[r]^\iota & M \ar[r]^\sigma & N\ar[r]&1}\]
  is a regular Schreier extension.

  Let $\kappa:N\to M$ be a cleavage. We have already seen that this yields an action
  $\phi$ of $N$ on $A$ and a 2-cocycle $\gamma:N\times N\to A$ at the start of Section
  \ref{sec.commutative}. Since $\sigma$ is a fibration, $\gamma$ has values in $A^\sx$.
  Subsequently, it belongs to the group $\cyc(N,A^\sx)$.

  Recall that $\aut_A(M)$ denotes the group of automorphisms $\psi:M\to M$ for which
  $\psi(A)=A$ and $\psi$ is cartesian. As already mentioned, we have a
  commutative diagram
  \[\xymatrix{0\ar[r] & A\ar[r] \ar[d]_{\theta_\psi} & M\ar[r]^{\sigma}\ar[d]^{\psi} &
                N\ar[r] \ar[d]^{\eta_\psi} & 0 \\
              0\ar[r]& A\ar[r] &M\ar[r]^\sigma &N\ar[r]	&0}\]
  in this setting. Consider the following subgroups:
  \begin{eqnarray*}
    \aut^{A,N} &=& \{\psi\in \aut_A(M)| \theta_\psi=\id_A, \eta_\psi=\id_N\} \\
    \aut^A(M)  &=& \{\psi\in \aut_A(M)| \theta_\psi=\id_A\} \\
    \aut^N_A(M)&=& \{\psi\in \aut_A(M)| \eta_\psi=\id_N\}.
  \end{eqnarray*}
  Recall that the homomorphism $\rho:\aut_A(M)\to C$ (\ref{eq.rho}) defined on page
  \pageref{eq.rho}). We now introduce the following subgroups of $C$:
  \begin{eqnarray*}
    C_1 &=& \{\theta\in Aut(A)|(\theta,\id_N)\in C\},\\
    C_2 &=& \{\eta\in Aut(N)|(\id_A,\eta)\in C\}.
  \end{eqnarray*}
  One easily sees that $\theta\in C_1$ if and only if $\theta:A\to A$ respects the
  action of $N$. That is to say, if $\theta(an)=\theta(a)n$ holds for all $n\in N$ and
  $a\in A$. Similarly, $\eta\in C_2$ if and only if $ax=a\eta(x)$ for all $x\in N$ and
  $a\in A$. The homomorphism $\rho$ yields homomorphisms
  \[\rho_1:Aut^N_A\to C_1 \quad {\rm and} \quad \rho_2:Aut^A(M)\to C_2,\]
  given respectively by
  \[\rho_1(\psi)=\theta_\psi \quad {\rm and} \quad \rho_2(\psi)=\eta_\psi.\]
  \newline

  We are now in a position to prove the following result, which was previously proved
  for groups \cite[Theorem 1]{yadav}.

  \begin{Th}\label{th.main_7}
    The following hold:
    \begin{itemize}
      \item[i)] For any 1-cocycle $\xi\in {\sf Z}^1(N,A^\sx)$, the map $x\mapsto
        x\iota(\delta(\sigma x))$ is an automorphism of $M$ which belongs to
        $Aut^{A,N}$. In this way, one obtains an isomorphism
        \[{\sf Z}^1(N,A^\sx)\to Aut^{A,N}.\]
      \item[ii)] For any $\theta\in C_1$, the map
        \[(m,n)\mapsto\gamma_{m,n}(\theta\gamma_{m,n})^{-1}\]
        is a 2-cocycle. The induced map
        \[C_1\xto{\lambda_1} H^2(N,A^\sx)\]
      	is independent of the chosen cleavage $\kappa$. 
      \item[iii)] One has an exact sequence
        \[\xymatrix{
          1\ar[r]& Aut^{A,N}\ar[r] &Aut^N_A\ar[r]^{\rho_1}&C_1\ar[r]^{\lambda_1}\,\ar[r]&
            H^2(N,A^\sx).
        }\]
      \item[iv)] For any $\eta\in C_2$, the association
        \[(m,n)\mapsto\gamma_{\eta(m),\eta(n)}\gamma_{m,n}^{-1}\]
        defines a 2-cocycle and the induced map
        \[\lambda_2\colon C_2\to H^2(N,A^\sx)\]
        is independent on the choice of the cleavage $\kappa$.
      \item[v)] We have an exact sequence:
        \[1\to Aut^{A,N}(M)\to Aut^A(M)\xto{\rho_2} C_2\xto{\lambda_2} H^2(N,A^\sx).\]
    \end{itemize}
  \end{Th}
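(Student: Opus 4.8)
The plan is to prove the five statements in sequence, exploiting the machinery already built (Lemmas~\ref{le.map_to_invs} and~\ref{le.aut3}, and the cocycle relations in Section~\ref{sec.commutative}). Throughout, $A$ being commutative trivialises several formulas: for instance the conjugation $\eta(n)^{-1}\phi_n(a)\eta(n)$ collapses, and $\phi_{\eta(n)}\theta(a)\alpha(n)=\alpha(n)\theta(\phi_n(a))$ becomes $\phi_{\eta(n)}(\theta(a))=\theta(\phi_n(a))$ when $\eta=\id$ or $\theta=\id$, which is exactly why $C_1$ and $C_2$ admit the clean descriptions stated before the theorem.

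\textbf{Parts i) and iii).} First I would analyse $\psi\in Aut^{A,N}$, i.e.\ with $\theta_\psi=\id_A$, $\eta_\psi=\id_N$. By Lemma~\ref{le.map_to_invs} such $\psi$ is determined by the map $\xi:N\to A^\sx$ via $\psi(\kappa(n)a)=\kappa(n)\xi(n)a$; identity~ii) of that lemma becomes vacuous (both sides equal $\phi_n(a)\xi(n)$ since $A$ is abelian and $\eta=\theta=\id$), while identity~iii) becomes $\gamma_{m,n}\phi_n(\xi(m))\xi(n)=\xi(mn)\gamma_{m,n}$, i.e.\ $\phi_n(\xi(m))\xi(n)=\xi(mn)$ after cancelling the invertible $\gamma_{m,n}$. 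That is precisely the $1$-cocycle condition defining ${\sf Z}^1(N,A^\sx)$, together with $\xi(1)=1$ from identity~i). Conversely Lemma~\ref{le.aut3} (with $\theta=\id$, $\eta=\id$) turns any such $\xi$ into an element of $Aut^{A,N}$, and one checks the assignment $\xi\mapsto\psi$ is a group isomorphism by composing two such automorphisms and reading off $\xi''(n)=\xi'(n)\xi(n)$ (again using commutativity of $A$). This proves~i). For~iii), exactness at $Aut^{A,N}$ is the injectivity just established; exactness at $Aut^N_A$ says $\ker(\rho_1)=Aut^{A,N}$, which is immediate since $\rho_1(\psi)=\theta_\psi$ and $\psi\in Aut^N_A$ already has $\eta_\psi=\id$; exactness at $C_1$ requires $\operatorname{im}(\rho_1)=\ker(\lambda_1)$, which reduces, via Lemma~\ref{le.aut3}, to the solvability for $\xi:N\to A^\sx$ of identity~iii) of Lemma~\ref{le.map_to_invs} with $\eta=\id$, namely $\gamma_{m,n}\phi_n(\xi(m))\xi(n)=\xi(mn)\theta(\gamma_{m,n})$ — exactly the statement that $\gamma_{m,n}(\theta\gamma_{m,n})^{-1}$ is a coboundary, i.e.\ that $\lambda_1(\theta)=0$.

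\textbf{Part ii)} is the lemma-type computation: for $\theta\in C_1$ (so $\theta$ commutes with the $N$-action), the map $(m,n)\mapsto\gamma_{m,n}(\theta\gamma_{m,n})^{-1}$ lands in $A^\sx$ and satisfies the $2$-cocycle identity and normalisation — one verifies this by applying $\theta$ to the cocycle identity for $\gamma$, using $\theta\phi_k=\phi_k\theta$, and dividing. Well-definedness of $\lambda_1$ on cohomology classes follows because changing the cleavage $\kappa$ to $\tilde\kappa=\kappa\cdot\eta'$ replaces $\gamma$ by a cohomologous $\tilde\gamma$ (Lemma~\ref{le.cob}), and one checks $\gamma(\theta\gamma)^{-1}$ and $\tilde\gamma(\theta\tilde\gamma)^{-1}$ differ by the coboundary of $\eta'(\theta\eta')^{-1}$. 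Parts~iv) and~v) are the mirror image with the roles of $\theta$ and $\eta$ swapped: for $\eta\in C_2$ one shows $(m,n)\mapsto\gamma_{\eta(m),\eta(n)}\gamma_{m,n}^{-1}$ is a regular $2$-cocycle (apply the cocycle identity at the arguments $\eta(m),\eta(n),\eta(k)$ and use $\eta\in\aut(N)$ to re-index), prove cleavage-independence via Lemma~\ref{le.cob} again, and then run the identical exactness argument: $\ker(\rho_2)=Aut^{A,N}$ trivially, and $\operatorname{im}(\rho_2)=\ker(\lambda_2)$ because by Lemma~\ref{le.aut3} an $\eta\in C_2$ lifts to $\psi\in Aut^A(M)$ iff there is $\xi:N\to A^\sx$ with $\gamma_{\eta(m),\eta(n)}\phi_{\eta(n)}(\xi(m))\xi(n)=\xi(mn)\gamma_{m,n}$, i.e.\ iff $\gamma_{\eta(m),\eta(n)}\gamma_{m,n}^{-1}$ is a coboundary.

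\textbf{Main obstacle.} The delicate point is not any single identity but keeping the bookkeeping straight in the exactness-at-$C_i$ arguments: one must check that the $\xi$ produced from the coboundary condition genuinely satisfies \emph{all} of identities i)--iii) of Lemma~\ref{le.map_to_invs} simultaneously (so that Lemma~\ref{le.aut3} applies and yields a bona fide $\psi\in\aut_A(M)$), and that the resulting $\psi$ has the prescribed $\theta_\psi$, $\eta_\psi$. When $\eta=\id$ (for~iii)) identity~ii) is automatic by commutativity of $A$, and symmetrically when $\theta=\id$ (for~v)) — so in fact the constraint reduces cleanly to identity~iii) in each case, and this is where the commutativity hypothesis does the real work. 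The cleavage-independence claims in~ii) and~iv) are the other place where care is needed, but they follow mechanically from Lemma~\ref{le.cob}.
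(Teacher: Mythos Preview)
Your proposal is correct and follows essentially the same route as the paper: both reduce parts i), iii), v) to the correspondence between $\psi\in\aut_A(M)$ and triples $(\theta,\eta,\xi)$ established in Lemmas~\ref{le.map_to_invs} and~\ref{le.aut3}, observe that commutativity of $A$ and invertibility of $\gamma$ collapse identities ii)--iii) of Lemma~\ref{le.map_to_invs} to the $1$-cocycle or coboundary condition as appropriate, and handle cleavage-independence in ii) and iv) via Lemma~\ref{le.cob}. One small point to tighten in your sketch of iv): when you verify that $(m,n)\mapsto\gamma_{\eta(m),\eta(n)}$ is a $2$-cocycle, the re-indexing via $\eta\in\aut(N)$ alone is not enough---you also need $\phi_{\eta(k)}=\phi_k$ (the $C_2$ condition) to convert the $\phi_{\eta(k)}(\gamma_{\eta(m),\eta(n)})$ arising from the cocycle identity for $\gamma$ into the required $\phi_k(\gamma'_{m,n})$, and similarly in v) to rewrite $\phi_{\eta(n)}(\xi(m))$ as $\phi_n(\xi(m))$ so that the coboundary condition matches~(\ref{eq.coc}).
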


  \begin{proof}
    i) By definition, $\psi\in Aut^{A,N}$ if and only if $\theta=\id_A$ and
    $\eta=\id_N$. Hence by Lemmas \ref{le.map_to_invs} and \ref{le.aut3}, the automorphism $\psi$
    is uniquely defined by a map $\xi:N\to A^\sx$, which must satisfy Conditions i) -
    iii) of Lemma \ref{le.map_to_invs}. These conditions directly imply that $\xi$ is a 1-cocycle
    since $A$ is commutative and the values of $\gamma$ are invertible. This finishes
    the proof of this part.

    ii) Since $\gamma$ is a 2-cocycle and $\theta$ is a morphism of $N$-modules,
    $\theta(\gamma)$ must also be a 2-cocycle. Thus, $(m,n)\mapsto\gamma_{m,n}
    (\theta\gamma_{m,n})^{-1}$ is a 2-cocycle as well since $\cyc(N,A^\sx)$ is a group.
    Let $\tilde{\kappa}$ be another cleavage, $\tilde{\phi}$ the corresponding action
    and $\tilde{\gamma}$ and associated 2-cocycle. It follows from Lemma \ref{le.cob}
    that $\phi=\tilde{\phi}$ and $\gamma\sim\tilde{\gamma}$ since $A$ is commutative.
    This also implies that $\theta(\gamma)\sim \theta(\tilde{\gamma})$ and the result
    follows. 

    iii) If we compare the definitions, we see that $\aut^{A,N}$ is the kernel of
    $\rho_1$, which is the exactness at $\aut^N_A$. To check the exactness at $C_1$,
    take $\theta\in C_1$ and assume $\lambda_1(\theta)=0$. Thus the 2-cocycle
    $(m,n)\mapsto \gamma_{m,n}(\theta\gamma_{m,n})^{-1}$ is a coboundary. An other way
    of expressing that is the existence of a function $\xi\colon N\to A^\sx$ with
    $\xi(1)=0$ such that
    \[\xi(mn)\theta(\gamma_{m,n})=\gamma_{m,n}\phi(n)(\xi(m))\xi(n).\]
    We can now use Lemma \ref{le.aut3} to construct an element $\psi\in Aut^N_A$ for
    which $\rho_1(\psi)=\theta$.

    iv) To show that the map $(m,n)\mapsto \gamma_{\eta(m),\eta(n)}\gamma_{m,n}^{-1}$
    belongs to $\cyc(N,A^\sx)$ it suffice to show that $\gamma'_{m,n}\in\cyc(N,A^\sx)$.
    Here, $\gamma'_{m,n}=\gamma_{\eta(m),\eta(n)}$. In fact, we have
    \[\gamma_{m,nk}'\gamma_{n,k}'=\gamma_{\eta(m),\eta(nk)}\gamma_{\eta(n),\eta(k)}=
      \gamma_{\eta(mn),\eta(k)}\phi_{\eta(k)}(\gamma_{\eta(m),\gamma(n}).\]
    Since $\eta\in C_2$, we have $\phi_{\eta(k)}(\gamma_{\eta(m),\gamma(n})=
    \phi_{k}(\gamma_{\eta(m),\gamma(n})$. The above expression thus equals to
    \[\gamma_{m,nk}'\gamma_{n,k}'=
      \gamma_{\eta(mn),\eta(k)}\phi_{k}(\gamma_{\eta(m),\eta(n})=
      \gamma'_{mn,k} \phi_k(\gamma'_{m,n}).\]
    It follows that $\gamma'_{m,n}\in\cyc(N,A^\sx)$.

    Let $\tilde{\kappa}$ be another cleavage. We can use Lemma \ref{le.cob} to conclude
    the existence of a map $\xi:N\to A^\sx$ such that
    \[\xi:\gamma\sim \tilde{\gamma},\]
    where $\tilde{\gamma}$ is the corresponding 2-cocycle. The fact that $\eta\in C_2$
    implies that $\xi':\gamma'\sim \tilde{\gamma'}$, where $\xi'(n)=\xi(\eta(n))$. The
    proof of this part follows.

    v) The definition of $\aut^{A,N}$ once again shows that it is the kernel of
    $\rho_2$, and exactness at $\aut^A(M)$ follows. We will now check the exactness at
    $C_2$. Take $\eta\in C_2$ and assume $\lambda_2(\eta)=0$. There exist a function
    $\xi\colon N\to A^\sx$ such that $\xi(1)=0$ and
    \[\xi(mn)\gamma_{\eta(m),\eta(n)})=\gamma_{m,n}\phi(n)(\xi(m))\xi(n).\]
    We can use Lemma \ref{le.aut3} to construct an element $\psi\in Aut^N_A$ for which
    $\rho_2(\psi)=\eta$.
  \end{proof}


\begin{thebibliography}{}
  \bibitem{borc}
    {\sc F. Borceux}.
    Handbook of Categorical Algebra. v. 2,
    Encyclopedia of Mathematics and its Applications, vol. 51,
    Cambridge University Press, Cambridge. 1994.
    
    \bibitem{bourn} {\sc D. Bourn; N. Martina-Ferreira, A. Montoli \& M. Sobral}
    Schreier split epimorphisms between monoids. Semigroup Forum 88, No. 3, 739-752 (2014).
    
    \bibitem{Faul}
   {\sc  P.F. Faul}. A survey of Schreier-type extensions of monoids.
    Semigroup Forum 104, No. 3, 519-539 (2022).
    
  \bibitem{sga1}
    {\sc A. Grothendieck \& \sc M. Raynaud}.
    S\'eminaire de G\'eometrie Alg\'ebrique 1. Rev\^etements \^etales et groupe
    fondamental.
    Lect. Notes. Math. 224,
    Springer, Berlin-Heidelberg-New York. 1971.
   
    \bibitem{I}{\sc  H. Inassaridze}, Extensions of regular semigroups,
    Bull. Georgian Acad. Sci., {\bf 39}(1965), 3-10.
    
  \bibitem{catlog} {\sc B. Jacob.} 
  Categorical logic and type theory.  Studies in Logic and the Foundations of Mathematics. 141. Amsterdam: Elsevier. xviii, 760 p. (1999).
  \bibitem{homology}
    {\sc S. MacLane}.
    Homology.
    Springer-Verlag. Berlin. Heidelberg.
    1994.
  \bibitem{PaForumMath}
    {\sc N. Martins-Ferreira, A. Montoli, A. Patchkoria \& M. Sobral}.
    On the classification of Schreier extensions of monoids with non-abelian
    kernel.
    Forum Math. 32. pp. 607--623.
    2020.

\bibitem{manuell} {\sc G. Manuell}. Monoid extensions and the Grothendieck construction. Semigroup Forum  105. pp. 488–507. (2022)
  
  
 \bibitem{yadav} {\sc I.B.S. Passi, M. Singh} and {\sc M. K. Yadav}. Automorphisms of abelian group extensions. J. Algebra. 324, pp. 820–830, (2010).
  \bibitem{schP1}
    {\sc A. Patchkoria}.
    Extensions of semimodules by monoids and their cohomological
    characterization.
    Bull. Georgian Acad. Sci. 86, pp. 21--24.
    1977.
	\bibitem{schP2}
    {\sc A. Patchkoria}.
    Cohomology of monoids with coefficients in semimodules.
    Bull. Georgian Acad. Sci. 86, pp. 545--548.
    1977.
  \bibitem{APachII}
    {\sc A. Patchkoria}.
    Cohomology monoids of monoids with coefficients in semimodules.
    Semigroup Forum. 97, pp. 131--153.
    2018.
    
    \bibitem{Str} {\sc R. Strecker},  {\em \"{U}ber kommutative Schreiersche Halbgruppenerweiterungen},
    Acta Math. Acad. Sci. Hungar., {\bf 23}(1972), 33-44.
    
  \bibitem{redei}
    {\sc L. Redei}.
    Die Verallgemeinerung der Schreierschen Erweiterungstheorie,
    Acta Sci. Math. Szeged, 14, pp. 252--273.
    1952.
  \bibitem{vistoli}
    {\sc A. Vistoli}.
    Notes on Grothendieck topologies, fibered categories and descent theory.
    In ''Fundamental algebraic geometry: Grothendieck's FGA explained''.
    Mathematical Surveys and Monographs 123.
    (AMS) (ISBN 0-8218-3541-6/hbk). x, 339. 2005.
    
\end{thebibliography}
\end{document}